\documentclass[a4paper]{amsart}

\usepackage{txfonts, amsmath,amstext,amsthm,amscd,amsopn,verbatim,amssymb, amsfonts}
\usepackage{fullpage, todonotes}

\usepackage[bbgreekl]{mathbbol}

\usepackage{tikz}
\usepackage{tikz-cd}
\usetikzlibrary{matrix}
\usetikzlibrary{shapes}
\usetikzlibrary{arrows}
\usetikzlibrary{calc,3d}
\usetikzlibrary{decorations,decorations.pathmorphing}
\usetikzlibrary{through}
\tikzset{ext/.style={circle, draw,inner sep=1pt},int/.style={circle,draw,fill,inner sep=1pt},nil/.style={inner sep=1pt}}
\tikzset{exte/.style={circle, draw,inner sep=3pt},inte/.style={circle,draw,fill,inner sep=3pt}}
\tikzset{diagram/.style={matrix of math nodes, row sep=3em, column sep=2.5em, text height=1.5ex, text depth=0.25ex}}
\tikzset{diagram2/.style={matrix of math nodes, row sep=0.5em, column sep=0.5em, text height=1.5ex, text depth=0.25ex}}
\tikzset{every picture/.append style={baseline=-.65ex}}
\tikzset{every loop/.style={draw}}

\usepackage{hyperref}

\theoremstyle{plain}
  \newtheorem{thm}{Theorem}[section]
   
  \newtheorem{defi}[thm]{Definition}
  \newtheorem{prop}[thm]{Proposition}
  
  \newtheorem{cor}[thm]{Corollary}
  \newtheorem{lemma}[thm]{Lemma}
   
\theoremstyle{definition}
  \newtheorem{ex}[thm]{Example}
  \newtheorem{rem}[thm]{Remark}
  \newtheorem{cons}[thm]{Construction}

\newcommand{\sslash}{\mkern-3mu\mathbin{
  \mathchoice{/\mkern-4mu/}% \displaystyle
    {/\mkern-4mu/}% \textstyle
    {/\mkern-3mu/}% \scriptstyle
    {/\mkern-3mu/}}\mkern-3mu}% \scriptscriptstyle

\newcommand{\alg}[1]{\mathfrak{{#1}}}
 % commutator

 % equation reference

\newcommand{\ad}{{\text{ad}}}

\newcommand{\R}{{\mathbb{R}}}
\newcommand{\Z}{{\mathbb{Z}}}
\newcommand{\K}{{\mathbb{K}}}
\newcommand{\Q}{{\mathbb{Q}}}

\newcommand{\fGCc}{{\mathsf{fcGC}}}

\newcommand{\TCG}{{\mathsf{TCG}}}
 % Nerve
\newcommand{\Graphs}{{\mathsf{Graphs}}}
\newcommand{\fGraphs}{{\mathsf{fGraphs}}}
 % predual
 % predual
\newcommand{\dgHAlgc}{{\mathit{dg}\mathcal{HA}\mathit{lg}}} 
\newcommand{\dgHOpc}{{\mathit{dg}\mathcal{HO}\mathit{p}^c}}

\newcommand{\FreeLie}{\mathrm{Free}_{\Lie}}

\newcommand{\Poiss}{{\mathsf{Poiss}}}

\newcommand{\ICG}{\mathsf{ICG}}
\newcommand{\ICGF}{\ICG^{\rm fr}}
\newcommand{\wICGF}{\widetilde \ICG^{\rm fr}}
\newcommand{\FICG}{\ICGF}
\newcommand{\TCGF}{\TCG^{\rm fr}}
\newcommand{\hotimes}{\hat\otimes}
\newcommand{\tm}{Z}

\newcommand{\Exp}{\mathrm{Exp}}

\newcommand{\hGra}{{\mathsf{hGra}}}

\newcommand{\sGrp}{\mathit{s}\mathcal G\mathit{rp}}

\newcommand{\Aut}{{\mathrm{Aut}}}

\newcommand{\fG}{\mathsf{fG}}
\newcommand{\sG}{\mathsf{G}}

\newcommand{\Def}{\mathrm{Def}}

\newcommand{\op}{\mathcal}

\newcommand{\Lie}{\mathsf{Lie}}

\newcommand{\hoLie}{\mathsf{hoLie}}

\newcommand{\FM}{\mathsf{FM}}

\newcommand{\cT}{\mathcal{T}}

\newcommand{\bpm}{\begin{pmatrix}}
\newcommand{\epm}{\end{pmatrix}}

\newcommand{\GC}{\mathsf{GC}}

\newcommand{\fGC}{\mathsf{fGC}}

\newcommand{\POp}{\op P}
\newcommand{\QOp}{\op Q}
\newcommand{\COp}{\op C}
\newcommand{\DOp}{\op D}
\DeclareMathOperator{\Map}{\mathrm{Map}}
\newcommand{\fg}{\mathfrak g}
\newcommand{\fh}{\mathfrak h}
\newcommand{\MC}{\mathrm{MC}}
\DeclareMathOperator{\Mor}{\mathtt{Mor}}

\newcommand{\mU}{\mathcal{U}}

\DeclareMathOperator{\sgn}{sgn}

%\mathbb{\Delta}}

\newcommand{\BGC}{\mathsf{BGC}}
\newcommand{\BGraphs}{\mathsf{BGraphs}}
\newcommand{\BstG}{\mathsf{BGraphs}}

\newcommand{\gra}{\mathrm{gra}}

\newcommand{\stG}{\Graphs}
\newcommand{\SO}{\mathit{SO}}

\newcommand{\lD}{\mathsf{D}}
\newcommand{\flD}{\lD^{\mathrm{fr}}}

\newcommand{\Conf}{\mathrm{Conf}}
\newcommand{\dgca}{\mathcal{D}\mathit{gca}}

\newcommand{\Ompoly}{\Omega(\Delta^\bullet)}

\newcommand{\Op}{{\mathcal{O}\mathit{p}}}
\newcommand{\sOp}{{\mathit{s}\Op}}

\newcommand{\catB}{{\mathcal{B}}}
\newcommand{\catC}{{\mathcal{C}}}
\newcommand{\catD}{{\mathcal{D}}}

\newcommand{\Grp}{{\mathcal{G}\mathit{rp}}}

\newcommand{\mF}{\mathcal{F}}
\newcommand{\dgVect}{\mathrm{dgVect}}

\DeclareMathOperator{\DGG}{\mathtt{G}}
\newcommand{\G}{\DGG}
\DeclareMathOperator{\Ind}{\mathrm{Ind}}
\DeclareMathOperator{\diag}{\mathrm{diag}}

\newcommand{\Pp}{P}

\DeclareMathOperator{\sS}{\mathtt{S}}

\newcommand{\DK}{{\alg p}}
\newcommand{\FreeOp}{{\mathbb F}}
\newcommand{\DKF}{\DK^{\mathrm{fr}}}
\newcommand{\stGC}{{}^*\GC}
\newcommand{\tadpole}{
\begin{tikzpicture}[baseline=-.65ex]
\node[int] (v) at (0,0) {};
\draw (v) edge[loop] (v);
\end{tikzpicture}
}
\newcommand{\thetagr}{
\begin{tikzpicture}[baseline=-.65ex]
\node[int] (v) at (0,0) {};
\node[int] (w) at (.7,0) {};
\draw (v) edge[bend left] (w) edge[bend right] (w)edge (w);
\end{tikzpicture}
}
\newcommand{\sSet}{\mathit{s}\mathcal{S}\mathit{et}}
\newcommand{\Set}{\mathcal{S}\mathit{et}}

\newcommand{\beq}[1]{
\begin{equation}\label{#1}
}
\newcommand{\eeq}{
\end{equation}
}

%\newcommand{\Fund}{\mathrm{Fund}}

%\renewcommand{\texorpdfstring}[2]{{#1}}

%%%%%%%%%%%%%%%%%%%%%%%%%%%%%%%%%%%%%%%%%%%%
%% Coloring
%\usepackage{ulem}
\usepackage{color}

%%%%%%%%%%%%%%%%%%%%%%%%%%%%%%%%%%%%%%%%%%%%

\begin{document}
%\title{Rational homotopy theory of group actions on operads, and framed little $n$-disks operads}
\title{Real models for the framed little $n$-disks operads}

\author{Anton Khoroshkin}
\address{	
Department of Mathematics, University of Haifa, Mount Carmel, 3498838, Haifa, Israel
}
\email{khoroshkin@gmail.com}

\author{Thomas Willwacher}
\address{Department of Mathematics \\ ETH Zurich \\  
R\"amistrasse 101 \\
8092 Zurich, Switzerland}
\email{thomas.willwacher@math.ethz.ch}

\thanks{T.W. acknowledges partial support by the Swiss National Science Foundation (grant 200021\_150012 and the SwissMap NCCR). This work has been partially funded by the European Research Council, ERC StG 678156--GRAPHCPX}

% \subjclass[2000]{16E45; 53D55; 53C15; 18G55}
% \date{}
%\keywords{Formality, Deformation Quantization, Operads}

\begin{abstract}
We study the action of the orthogonal group on the little $n$-disks operads.
As an application we provide small models (over the reals) for the framed little $n$-disks operads.
It follows in particular that the framed little $n$-disks operads are formal (over the reals) for $n$ even and coformal for all $n$.
\end{abstract}

\maketitle

\setcounter{tocdepth}{2}
\tableofcontents

\section{Introduction}

The framed little $n$-disks operads $\flD_n$ are operads of embeddings of ``small'' $n$-dimensional disks in the $n$-dimensional unit disk. 
These operads are of fundamental importance in algebraic topology and homological algebra. In particular, in recent years they saw a surging interest due to applications in the manifold calculus of Goodwillie-Weiss \cite{G,GW}, and, relatedly, in the study of factorization algebras in homotopy theory \cite{AF}.

Surprisingly, the rational homotopy type of the operads $\flD_n$ is currently not understood very well.
This is in sharp contrast to the rational homotopy type of the non-framed sub-operads $\lD_n\subset \flD_n$, which is well understood due to work of Kontsevich \cite{K2}, Tamarkin (for $n=2$) \cite{Tam}, Lambrechts-Voli\'c \cite{LV}, Petersen \cite{Petersen} and Fresse-Willwacher \cite{FW} (see also \cite{BoavidaHorel} for recent results in positive characteristic).
Furthermore, it is known that the operad $\flD_2$ is rationally formal \cite{pavolfr, GS}.
The goal of this paper is to study the real homotopy type of the topological operads $\flD_n$ for $n\geq 3$.

To this end we will study the real homotopy type of the action of the orthogonal groups on the operads $\lD_n$, from which the real homotopy type of $\flD_n$ may be deduced.
It has been shown in \cite{FWAut} that the homotopy automorphisms of the rationalized operad $\lD_n^\Q$ are
\begin{equation}\label{equ:FW aut}
\Aut^h(\lD_n^\Q) \simeq \Q^\times \ltimes \Exp_\bullet(\GC_n^2),
\end{equation}
where $\GC_2^2$ is the Kontsevich graph complex, a dg Lie algebra to be discussed in detail below, and $\Exp_\bullet(-)$ its exponential simplicial group.

The action of the rationalization $\SO(n)^\Q$ on $\lD_n^\Q$ is determined by a morphism from $B\SO(n)^\Q$ to $B\Aut^h(\lD_n^\Q)$.
Since $\SO(n)$ is connected this map actually lands in the classifying space of the connected component of the identity, which is in turn contained in $\Exp_\bullet(\GC_n^2)$ since $\Q^\times$ is discrete.
Hence our map takes the form 
\[
 B\SO(n)^\Q \to B\Exp_\bullet(\GC_n^2) \cong \MC_\bullet(\GC_n^2),
\]
and any such morphism is equivalent to providing a Maurer-Cartan element 
\[
 m_\Q\in 
%  \left( 
  \GC_n^2\hat \otimes H(B\SO(n);\Q)
  %  \right)^{\Z_2},
\]
in the dg Lie algebra $\GC_n\hat \otimes H(B\SO(n);\Q)$.
The main result of this paper is a computation of the (gauge equivalence class) of the real version 
$m_\Q\hotimes_\Q \R$ of this Maurer-Cartan element.

To state the result precisely we need some more preparation.
First, there is an action of $\GC_n^2$ on an extended version $\Graphs_n^2$ of Kontsevich's dg Hopf cooperad model $\Graphs_n$ for $\lD_n$, and this action underlies the identification \eqref{equ:FW aut}. 
Any Maurer-Cartan element $m\in\GC_n^2\hat \otimes H(B\SO(n);\R)$ then 
defines a coaction of a dg Hopf coalgebra model $A$ for $\SO(n)$ on $\Graphs_n^2$. In our case the relevant Maurer-Cartan elements in fact live in a smaller dg Lie subalgebra $\GC_n^+\subset \GC_n^2$, which acts directly on Kontsevich's model $\Graphs_n$, so that the extension to $\Graphs_n^2$ is not necessary.

Furthermore, the arity-wise application of the geometric realization functor $\G: \dgca \to \sSet$ from differential graded commutative algebras to simplicial sets (see \eqref{equ:rht adj} below) takes dg commutative Hopf algebras to simplicial groups, dg Hopf cooperads to simplicial operads and coactions to action.
In particular, we obtain from $m$ a pair 
\[
  ( \G A,  \G \Graphs_n)_m,
\]
consisting of a simplicial group acting on a simplicial operad, and where we have marked the dependence of the action on $m$ in the notation.

Now define the following family of Maurer-Cartan elements 
\begin{equation}\label{equ:mn intro}
  m_n :=
  \begin{cases}
    -E \tadpole & \text{for $n$ even} \\
    \sum_{j\geq 1}
    \frac {\Pp_{2n-2}^{j}}{4^j}
   \frac{1}{2(2j+1)!} 
   \begin{tikzpicture}[baseline=-.65ex]
    \node[int] (v) at (0,.5) {};
    \node[int] (w) at (0,-0.5) {};
    \draw (v) edge[bend left=50] (w) edge[bend right=50] (w) edge[bend left=30] (w) edge[bend right=30] (w);
    \node at (2,0) {($2j+1$ edges)};
    \node at (0,0) {$\scriptstyle\cdots$};
   \end{tikzpicture}
   & \text{for $n$ odd}
  \end{cases}
  \in 
  \GC_n^+\hat \otimes H(B\SO(n);\R),
\end{equation}
where $E\in H(B\SO(n);\R)$ is the Euler class and $\Pp_{2n-2}\in H(B\SO(n);\R)$ is the top Pontryagin class.
Let us denote by $\sS(\SO(n), \lD_n)$ some simplicial model for the pair $(\SO(n), \lD_n)$.
Then our main result reads:

\begin{thm}\label{thm:main pair}
  For each $n\geq 3$ there is a zigzag of real homotopy equivalences 
  \[
   \sS(\SO(n), \lD_n) \dasharrow (\G A, \G\Graphs_n)_{m_n} 
  \]
  of pairs $(G,\POp)$ consisting of a simplicial group $G$ acting on a simplicial operad $\POp$, that connects the little disks operad with its natural $\SO(n)$-action, to the pair constructed above. 
\end{thm}
In other words, we have that 
\[
  (\SO(n)^\R, \lD_n^\R) \simeq (\G A, \G\Graphs_n)_{m_n}.
\]
Or yet alternatively, the result states that the pair $(A, \Graphs_n)_{m_n}$ consisting of the commutative Hopf algebra $A$ coacting on the dg Hopf cooperad $\Graphs_n$ via $m_n$ constitutes a real model in the sense of rational homotopy theory for the pair $(\SO(n), \lD_n)$.

We also note that the case $n=2$ of Theorem \ref{thm:main pair} also holds, but this is well-known by prior work.

There are several Corollaries to Theorem \ref{thm:main pair}.
First, the framed-operad construction 
\[
(G,\POp) \to \POp \rtimes G  
\]
is functorial in pairs consisting of simplicial group and an operad acted upon by the group, and preserves weak ($\R$-)homotopy equivalences.
Furthermore, one has that 
\[
(\G\Graphs_n) \rtimes_{m_n} (\G A) = \G (\Graphs_n \rtimes_{m_n} A)   
\]
where on the right-hand side we used a natural version of the framing construction $\rtimes$ defined for dg commutative Hopf algebras acting on dg Hopf cooperads, see section \ref{sec:framed operads} below, and on both sides we indicate the dependence on $m_n$ in the notation.
One then immediately obtains from Theorem \ref{thm:main pair}:

\begin{cor}\label{cor:flD model}
For $n\geq 3$ there is a zigzag of real homotopy equivalences of simplicial operads 
\[
\flD_n \dashrightarrow \G (\Graphs_n \rtimes_{m_n} A)
\]
connecting the framed little disks operad $\flD_n$ to the simplicial operad $\G (\Graphs_n \rtimes_{m_n} A)$.
\end{cor}
In other words, the dg Hopf cooperad $\Graphs_n \rtimes_{m_n} A$ is a real dg Hopf cooperad model for $\flD_2$ in the sense of rational homotopy theory.
Again, the statement of Corollary \ref{cor:flD model} also holds for $n=2$, but that case is well-known.

From the explicit model of Corollary \ref{cor:flD model} one can read of various further facts about the framed little disks operads.

\begin{cor}\label{cor:partial framed formality}
Let $n\geq 4$. The $\SO(n)$-framed little $n$-disks operads are formal over $\R$ if $n$ is even.
\end{cor}
For the precise meaning of formality we refer to section \ref{sec:models formality} below. The case $n=2$ is again well known and has been shown in \cite{pavolfr, GS}.

\begin{cor}\label{cor:odd nonformality}
 The operads of real chains of the $\SO(n)$-framed little $n$-disks operads are not formal for $n\geq 3$ odd. 
\end{cor}
While this work was under preparation, the case $n\geq 5$ has also been shown in \cite{Mo} by an explicit obstruction computation.

\begin{cor}\label{cor:FE3coformal}
The operads $\flD_n$ are coformal over $\R$ for all $n\geq 2$.
\end{cor}

The explicit minimal (Quillen) graded Lie algebra model of $\flD_n$ is constructed in section \ref{sec:quillen} below.

We will derive Theorem \ref{thm:main pair} from a more technical result.
We will see that to describe $\lD_n$ as an operad with $\SO(n)$-action, we can equivalently study the homotopy quotient $\lD_n\sslash\SO(n)$ as an operad in simplicial sets over $B\SO(n)$.
A real model for that object is given by the equivariant differential forms on $\lD_n$. These form a cooperad only up to homotopy. Thus we have to work with homotopy dg Hopf cooperads, and can formulate the following result. 
\begin{thm}\label{thm:main equiv}
There is a quasi-isomorphism of homotopy dg Hopf cooperads under $H(B\SO(n);\R)$
\[
  (\Graphs_n \hotimes H(B\SO(n);\R))^{m_n}
  \to 
  \Omega^{PA}_{\SO(n)}(\FM_n),
\]
where $\Omega^{PA}_{\SO(n)}(\FM_n)$ is a (quasi-isomorphic) version of the equivariant differential forms on the Fulton-MacPherson operad, introduced in section \ref{sec:equiv forms} below.
\end{thm}

We remark that all operads appearing in this paper are concentrated in arities $\geq 1$, i.e., we do not consider operations of arity zero.

\subsection*{Structure of the paper}
In section \ref{sec:basic notation} we introduce our notation and recall several well-known facts in operad theory and homotopy theory.
In section \ref{sec:rht pairs} we recall the version of homotopy operads we use and discuss their real homotopy theory. 
Section \ref{sec:k graphs} contains a discussion of the Kontsevich graph complex and the cooperad $\Graphs_n$, as well as the action of $\SO(n)$.
Section \ref{sec:main derivations} contains the proofs of Theorem \ref{thm:main pair} above and its corollaries, provided Theorem \ref{thm:main equiv}. Finally, the remainder of the paper is devoted to the proof of Theorem \ref{thm:main equiv}.
In section \ref{sec:graphs} we first show that the Theorem holds for some Maurer-Cartan element $Z^n_{\SO(n)}$ for which we provide an explicit configuration space integral formula.
Using an equivariant localization technique, we then show in sections \ref{sec:MC} and \ref{sec:auxthmproof} that this Maurer-Cartan element is gauge equivalent to the element $m_n$ of \eqref{equ:mn intro} above, and hence derive Theorem \ref{thm:main equiv}. 
More precisely, the argument of sections 8 and 9 is an induction, with the main new idea of the induction step being the following:
We may restrict the the group from $\SO(n)$ to $\SO(n-2)\times \SO(2)$ to obtain a Maurer-Cartan element $Z^n_{\SO(n-2)\times \SO(2)}$. After inverting the Euler class of $H(B\SO(2))$, $Z^n_{\SO(n-2)\times \SO(2)}$ essentially becomes gauge equivalent to the Maurer-Cartan element $Z^{n-2}_{\SO(n-2)}$ for dimension $n-2$, as is shown in section \ref{sec:auxthmproof}. This is reminiscent of the equivariant localization result that the fixed point inclusion 
\[
\lD_{n-2} \cong \lD_{n}^{\SO(2)} \hookrightarrow \lD_n
\]
induces an isomorphism on equivariant cohomology after inverting the orthogonal Euler class, see \cite[Theorem III.1]{Hsiang}. Surprisingly, the restriction of the group and the localization at the Euler class can be undone, as shown in section 8, and the gauge equivalence class of $Z^n_{\SO(n)}$ can thus be recovered from that of $Z^{n-2}_{\SO(n-2)}$.

\subsection*{Remark}
This paper is a thoroughly revised and extended version of our arXiv preprint \cite{KWarxiv} from 2017.
It repairs several shortcomings of the original preprint, that used an ad-hoc real homotopy theory for operads. Here we embed the statements of our preprint in a more standard homotopy theory of operads, to make them more useful for forthcoming works.
We note that this has partially also been done in the concluding remarks of Fresse's paper \cite[section 5]{Frextended}, where he develops  a rational homotopy theory of operads.
We remark on the relation to Fresse's work in sections \ref{sec:Fresse rht} and \ref{sec:models formality} below.
Unfortunately, due to significant changes, the notation of the current manuscript is not fully compatible with that of the original 2017 preprint.

We also remark that very recently a version of $O(n)$-equivariant formality for all $\lD_n$ were shown by Boavida et al. \cite{BoavidaCiriciHorel}. Their formality statements concern a representation of the $O(n)$-operad $\lD_n$ in a category of dendroidal objects, and are in particular not in contradiction to the non-formality statements for odd $n$ presented above. Rather, they should be viewed as related to formality of the homotopy quotient $\lD_n\sslash O(n)$.

\subsection*{Acknowledgements}
We are very grateful for discussions with Pedro Boavida de Brito, Joana Cirici, Benoit Fresse, Geoffroy Horel, Alexander Kupers and Victor Turchin.
Victor Turchin in particular contributed to parts of (the original version of) section \ref{sec:auxthmproof}.

% \section{Preliminaries}

\section{Recollections and notation}\label{sec:basic notation}

\subsection{Vector spaces, complexes, dgcas}\label{sec:vector spaces complexes}

We generally work over a ground field $\K$ of characteristic zero.
Our algebraic constructions work for $\K=\Q$. To show the main results we will however use transcendental methods (integrals) and eventually restrict to $\K=\R$.

As usual, we abbreviate the phrase \emph{differential graded} by dg. We mostly use cohomological conventions, so that our differentials have degree $+1$. (If needed, we convert homological degrees to cohomological by changing their sign.) 
We denote the category of differential $\mathbb Z$-graded vector spaces by $\dgVect$, and the subcategory consisting of such objects with non-negative grading by $\dgVect_{\geq 0}$. 
For a dg vector space $V$ we will denote its cohomology by $H(V)$.
For $X$ a topological space we will denote the cohomology with $\K$-coefficients by $H(X)$ or $H^\bullet(X)$, and the homology by $H_\bullet(X)$.
We shall denote by $\dgca$ the category of differential non-negatively graded commutative algebras.

\subsection{Model category structures on functor categories}

\begin{thm}[{Existence of projective model structure, \cite[Theorems 11.6.1 and 11.7.3]{Hirschhorn}, \cite[Corollary 1.54]{AdamekRosicky}, \cite[Remark A.2.8.4]{LurieHTT}}]
  \label{thm:proj exists}
  Let $\catC$ be a cofibrantly generated model category and let $\catD$ be a small category. Then the diagram category $\catC^{\catD}$ carries a cofibrantly generated model category structure such that 
  \begin{itemize}
    \item The weak equivalences (resp. fibrations) in $\catC^{\catD}$ are those natural transformations that are weak equivalences (resp. fibrations) in $\catC$ objectwise.
    \item The cofibrations in $\catC^{\catD}$ are the morphisms that have the left-lifting property with respect to the acyclic fibrations.
  \end{itemize}
  If furthermore $\catC$ is a combinatorial (resp. simplicial or left proper) model category  then $\catC^{\catD}$ is a combinatorial (resp. simplicial or left proper) model category as well.
\end{thm}

This model structure is called the \emph{projective} model structure on $\catC^{\catD}$. Dually, there is also the injective model structure, that is however more involved.
\begin{thm}[{Existence of injective model structure \cite[Propositions A.2.8.2 and A.3.3.2]{LurieHTT}}]
  \label{thm:inj exists}
  Let $\catC$ be a cofibrantly generated combinatorial model category and let $\catD$ be a small category. Then the diagram category $\catC^{\catD}$ carries a model category structure such that 
  \begin{itemize}
    \item The weak equivalences (resp. cofibrations) in $\catC^{\catD}$ are those natural transformations that are weak equivalences (resp. cofibrations) in $\catC$ objectwise.
    \item The fibrations in $\catC^{\catD}$ are the morphisms that have the right-lifting property with respect to the acyclic fibrations.
  \end{itemize}
\end{thm}

The constructions of these model category structure are functorial in $\catC$ in the following sence.
\begin{thm}[{\cite[Theorem 11.6.5]{Hirschhorn}, \cite[Remark A.2.8.6]{LurieHTT}}]
  \label{thm:proj functorial}
Let $\catB$, $\catC$ be cofibrantly generated model categories and let $\catD$ be a small category. Let 
\begin{equation}\label{equ:LR adj func}
L \colon \catB \rightleftarrows \catC \colon R  
\end{equation}
be a Quillen adjunction. Then post-composition induces an adjunction on diagram categories 
\begin{equation}\label{equ:LR adj func2}
  L \colon \catB^\catD \rightleftarrows \catC^\catD \colon R. 
\end{equation}
that is a Quillen adjunction with respect to either the projective or injective model category structures, if the latter exist.
If furthermore \eqref{equ:LR adj func} is a Quillen equivalence, so is \eqref{equ:LR adj func2}.
\end{thm}

\subsection{Rational (real) homotopy theory}
\label{sec:std rht}
% We consider here a real version of rational homotopy theory, obtained from the standard version by base change from $\Q$ to $\R$.
Let $\sSet$ be the category of simplicial sets equipped with the standard Quillen model category structure. That is, the weak equivalences of $\sSet$ are the weak homotopy equivalences, the fibrations are the Kan fibrations and the cofibrations are the degree-wise injective morphisms of simplicial sets.

Let $\dgca$ be the category of differential non-negatively graded commutative algebras.
This category is equipped with a cofibrantly generated model structure obtained by transfer along the forgetful functor (see e.g. \cite[II.6.2]{F})
\[
\dgca \to \dgVect,  
\] 
or in other words:
\begin{itemize}
\item The weak equivalences of $\dgca$ are the quasi-isomorphisms.
\item The fibrations are the surjective maps.
\item The cofibrations are those morphisms that have the left-lifting property with respect to the acyclic fibrations. 
\end{itemize}

The central element of rational (or real) homotopy theory is the Quillen adjunction 
\begin{equation}\label{equ:rht adj} 
  \Omega \colon \sSet \rightleftarrows (\dgca)^{op} \colon \G
\end{equation}
with 
\begin{align*}
 \G A &= \Mor_{\dgca}(A, \Ompoly)
 &
 \Omega(X) &= \Mor_{\sSet}(X, \Ompoly)
\end{align*}
and $\Ompoly$ the simplicial dg commutative algebra of polynomial differential forms on the simplicies.
For $X$ a(n automatically cofibrant) simplicial set, we define its rationalization as
\[
X^\Q = \G^h(\Omega(X)) := \G(A),
\]
for $A$ a cofibrant replacement of $\Omega(X)$ in $\dgca$.

% Fix such $X$ and $A$. Then the adjunction also also extends to the slice categories to form a square of Quillen adjunctions
% \begin{equation}\label{equ:G Omega rel}
%   \begin{tikzcd}%[row sep=1.5cm, column sep=1.5cm]
%     \sSet_{/X^\Q}
%     \ar[shift left]{r}{\Omega}
%     \ar[shift left]{d}
%     &
%     (\dgca^{A/})^{op}
%     \ar[shift left]{l}{\G}
%     \ar[shift left]{d}
%     \\
%     \sSet_{/X}
%     \ar[shift left]{r}{\Omega}
%     \ar[shift left]{u}
%     &
%     (\dgca^{\Omega(X)/})^{op}
%     \ar[shift left]{l}{\G}
%     \ar[shift left]{u}
%   \end{tikzcd}.
% \end{equation}
% The vertical arrows are the base-change adjunctions.

\begin{rem}\label{rem:G pullback}
We will use below that by adjunction the functor $\G$ sends pushouts in $\dgca$ to pullbacks in $\sSet$. This means that for $A\leftarrow B\to C$ a diagram of dg commutative algebras we have that 
\[
 \G(A\otimes_B C) = \G A \times_{\G B} \G C.   
\]
\end{rem}

\subsection{Rational homotopy theory of simplicial groups}\label{sec:sGrp rht}
Let $\sSet_0\subset \sSet$ be the full sub-category of reduced simplicial sets, that is, simplicial sets $X$ with $X_0=*$.
By \cite[Proposition V.6.2]{GoerssJardine} $\sSet_0$ is equipped with a model category structure such that 
\begin{itemize}
\item The weak equivalences (resp. cofibrations) are the morphisms that are weak equivalences (resp. cofibrations) in $\sSet$. (I.e., weak homotopy equivalences and monomorphisms respectively.)
\item The fibrations are the morphisms that have the right-lifting property with respect to the acyclic cofibrations.
\end{itemize}
This model category structure can be thought of as the left-transferred model structure along the adjunction
\begin{equation}\label{equ:iota E1 adj}
  \iota \colon \sSet_0 \rightleftarrows \sSet^{*/} \colon E_1,
\end{equation}
which is hence Quillen.
Here $\iota$ is the obvious inclusion and $E_1$ the Eilenberg subcomplex construction in degree 1. 
Then the following result is classical.
\begin{thm}
    [{see \cite[Proposition V.6.3]{GoerssJardine}}]
    \label{thm:sGrp equivalence}
The simplicial classifying space functor $\bar W$ and the simplicial loop space functor $\ell$ form a Quillen equivalence 
\begin{equation}\label{equ:ell bar W adj}
\ell \colon \sSet_0 \rightleftarrows \sGrp \colon \bar W   
\end{equation}
between the categories of simplicial sets and simplicial groups.
\end{thm}

Finally, one has the Quillen adjunction obtained by slicing \eqref{equ:rht adj},
\[
  \Omega \colon \sSet^{*/} \rightleftarrows (\dgca_{/\K})^{op}\colon \G
\]
with the right-hand category the category of augmented dg commutative algebras.

Now let $G\in \sGrp$ be a connected simplicial group, i.e., $\pi_0(G)=*$.
Then we define the simplicial group 
\[
G^\K \simeq  \ell^h \circ E_1^h \circ \G^h \circ \Omega^h \circ \iota^h \circ \bar W^h (G)
\] 
by composing the derived functors $(-)^h$ of the functors of the adjunctions above.
This expression through derived functors shows that $G^\K$ is well-defined up to weak equivalence. Furthermore, $G$ is connected to $G^\K$ be a zigzag of morphisms using the adjunction (co-)units.
Finally, there is a much simpler expression for $G^\K$: Suppose that $G$ is fibrant (a Kan complex), and let $A\xrightarrow{\sim} \Omega(\bar WG)$ be a cofibrant replacement in $\dgca$ such that the subspace of cohomological degree zero $A^0\subset A$ satisfies $A^0=\K$.
For example, we may take the Sullivan minimal model for $A$.
Then we have that 
\[
  G^\K \simeq \ell\circ \G(A).
\] 
To see this note that all objects of $\sSet^{*/}$ and $\sSet_0$ are cofibrant, hence we can replace the respective derived functors by the non-derived functors. Furthermore, from our connectivity condition on $A$ it follows that $\G A$ is already a reduced simplicial set, so that $E_1^h\circ \G A =\G A$. Then, the derived morphism $G\dashrightarrow G^\K$ is given by the zigzag
\[
G \xleftarrow{\sim} \ell(\bar W G) 
% \xrightarrow{\phi} 
\to
\ell( \G (A) ).
\]
\begin{prop}\label{prop:G rht equiv}
For $G$ a connected simplicial group with degree-wise finite dimensional rational cohomology the morphism above is a $\K$-homotopy equivalence, in the sense that it induces an isomorphism on $\pi_0$ (i.e., $G^{\K}$ is connected) and isomorphisms 
\[
\pi_k(G) \otimes_{\mathbb{Z}} \K \cong 
\pi_k(G^{\K}).
\]
\end{prop}
\begin{proof}
Since $G$ is connected $\bar WG$ is simply connected.
Using \cite[Theorem II.7.3.5]{F} we then have that the derived adjunction counit 
\[
  \bar WG \to \G (A)
\]
is a $\K$-homotopy equivalence. Since $\ell$ just shifts the homotopy groups down in degree the map 
\[
  \ell(\bar W G) \to \ell( \G (A))
\]
is also a $\K$-homotopy equivalence.
Finally that map $G \xleftarrow{\sim} \ell(\bar W G) $ is a weak equivalence by Theorem \ref{thm:sGrp equivalence}.
\end{proof}

\subsection{\texorpdfstring{$G$}{G}-spaces}
Let $G$ be a simplicial group and $EG\to BG$ the universal principal $G$-bundle.
We consider the category $G\sSet$ of simplicial sets with a $G$-action. It can be equipped with a cofibrantly generated model category structure by model categorial transfer along the forgetful functor 
\[
  G\sSet \to \sSet.
\]
The weak equivalences (resp. fibrations) in $G\sSet$ are those $G$-equivariant morphisms of simplicial sets that are weak homotopy equivalences (resp. Kan fibrations) of simplicial sets.
The following result can be extracted from the literature.
\begin{prop}\label{prop:GsSet}
The model category structure on $G\sSet$ as above is well defined.
It furthermore has the following properties:
\begin{itemize}
\item $G\sSet$ is a left and right proper combinatorial and simplicial model category.
% \item The generating (acyclic) cofibrations may be taken to be the products $G\times f$ of $G$ with the generating (acyclic) cofibrations of $\sSet$.
\item The forgetful functor $F:G\sSet \to \sSet$ is a left and right Quillen adjoint. It creates weak equivalences and preserves fibrations and cofibrations.
\end{itemize}
\end{prop}
\begin{proof}
  We refer to \cite{DDK} or \cite[Theorem V.2.3]{GoerssJardine} for the validity of the construction of the cofibrantly generated simplicial model category structure.
  For combinatoriality we then have to verify that $G\sSet$ is locally presentable -- this follows since any $G$-simplicial set is a union of its finitely generated sub-objects.

  The last statement of the Proposition is also well-known. It clear from the construction of the model category structure by transfer that $F$ is a right adjoint and creates weak equivalences.
  On the other hand one has an adjunction 
  \[
    F \colon G\sSet \rightleftarrows \sSet \colon \Map_{\sSet}(G, -).
  \]
  The right-adjoint preserves (acyclic) fibrations, which are just acyclic fibrations of simplicial sets. Hence $F$ is also left Quillen and the remaining statements follow. 
\end{proof}

The over-category $\sSet_{/\bar WG}$ of $\bar WG$ is naturally equipped with the slice model structure. That is, a morphism is a weak equivalence (resp. fibration, cofibration) if the underlying morphism of simplicial sets is a weak equivalence (resp. fibration, cofibration).
It is well-known that the two model categories are Quillen equivalent.
\begin{prop}[{Dror-Dwyer-Kan \cite[Proposition 2.3]{DDK}}]
\label{prop:DDK}
  Let $G$ be a simplicial group. Then there is a Quillen equivalence 
  \begin{equation}\label{equ:times WG adj}
  (-) \times_{\bar WG} WG \colon
  \sSet_{/\bar WG} \rightleftarrows 
  G\sSet
  \colon (-)\sslash G .
  \end{equation}
\end{prop}

\subsection{Dg Lie algebras, Maurer-Cartan elements and exponential group}
Let $\fg$ be a dg Lie algebra. Then a Maurer-Cartan element is a degree 1 element $\alpha\in \fg$ satisfying the Maurer-Cartan equation 
\[
d\alpha +\frac12 [\alpha,\alpha]=0.
\]
We write $\MC(\fg)$ for the set of Maurer-Cartan elements of $\fg$.
Next, suppose that $\fg$ is equipped with a descending complete filtration 
\[
\fg = \mF^1\fg \supset \mF^1\fg \supset \cdots
\]
that is compatible with the dg Lie structure in the sense that $d\mF^p\fg\subset \mF^p\fg$ and $[\mF^p\fg,\mF^q\fg]\subset \mF^{p+q}\fg$.
Then we define the Maurer-Cartan space of $\fg$ to be the simplicial set 
\[
\MC_\bullet(\fg) = \MC(\fg\hotimes \Ompoly),
\]
where we use the filtration to complete the tensor product.

Furthermore, we define the exponential group of $\fg$ to be the simplicial group
\[
\Exp_\bullet(\fg) := \{x \in (\fg\hotimes \Ompoly)^0 \mid dx=0\}
\]
consisting of the closed elements of degree zero.
The group multiplication is given by the Baker-Campbell-Hausdorff (BCH) formula 
\[
x*_{BCH} y = x+y+\frac12 [x,y] + \cdots.
\]
Note that the BCH series on the right-hand side converges due to our assumption that the filtration is complete. Note that as simplicial sets we have 
\[
  \Exp_\bullet(\fg) = \MC_\bullet(\fg[-1]),
\]
where we consider the degree shifted dg vector space $\fg[-1]$ as an abelian dg Lie algebra. 
For more details on the above constructions we refer the reader to \cite{BerglundMC}.

\subsection{Equivariant cohomology and the Cartan model}\label{sec:compactGrecollection}
Now suppose that $G$ is a compact connected Lie group, $T\subset G$ a maximal torus, $N=N(T)\subset G$ the normalizer of $T$ in $G$ and $W=N/T$ the Weyl group of $G$.
Denote by $EG\to BG$ a(ny) model of the classifying fibration.
For $X$ a $G$-space there are morphisms  
\[
    (EG \times X)/T/W = (EG \times X)/N
\to (EG \times X)/G =: X\sslash G. 
\]
It is known (see \cite[Proposition III.1.1]{Hsiang} or \cite[Proposition 1]{BrionEquivariant}) that the above maps induce isomorphisms on cohomology
\[
 H(X\sslash G) \cong H(X\sslash N) \cong H(X\sslash T)^W.
\] 

Next suppose that $X$ is a smooth manifold and $G$ acts smoothly on $X$. Then we may compute the equivariant cohomology with the Cartan model.
We refer to \cite{Meinrenken} for a comprehensive overview of this construction.
For us, it will be sufficient to consider the toric variant. Let $r$ be the rank of $G$ so that $T=(S^1)^{\times r}$. Let $e_1,\dots,e_r$ be the generating vector fields on $X$ of the $r$ circle actions. 
Then we define 
\[
\Omega_T^{Cartan}(X):=
\R[u_1,\dots,u_r]\otimes \Omega_{dR}(X)^T
\]
with $u_1,\dots,u_r$ formal variables of degree $+2$, and with the differential 
\begin{equation}\label{equ:dudef}
d_u = d - \sum_{j} u_j \iota_{e_j}.
\end{equation}
Here $\iota_{e_j}$ is the contraction operator with the vector field $e_j$ on differential forms on $X$. Also note that there is a natural action of the Weyl group $W$ on $\Omega_T^{Cartan}(X)$.
We then define 
\[
\Omega_G^{Cartan}(X):=
(\Omega_T^{Cartan}(X))^W
=
(\R[u_1,\dots,u_r]\otimes \Omega_{dR}(X))^N.
\]

It is known that (see \cite[Theorems 6.1 and 6.7]{Meinrenken}) 
\begin{align*}
H( \Omega_T^{Cartan}(X)) &\cong H(X\sslash T)
& &
\text{and}
&
H( \Omega_G^{Cartan}(X)) &\cong H(X\sslash G).
\end{align*}

\subsection{Models for compact connected Lie groups and \texorpdfstring{$\SO(n)$}{SO(n)}}
\label{sec:group models}
It is well known that for $G$ a compact connected Lie group the cohomology of $BG$ is a free graded commutative algebra in generators of even degree
$$
H(BG) \cong \K[X_1,\dots,X_r] 
$$
with $r$ the rank of $G$ and the degrees of the generators determined by the exponents of the Lie algebra of $G$.
Since $H(BG)$ is free it follows that $BG$ is formal.
Furthermore, freeness implies that $H(BG)$ is cofibrant in $\dgca$ and hence we have that 
\[
BG^\K \simeq \G(H(BG)).
\]
Consider an abelian graded Lie algebra 
$$
\fg = \mathrm{span}(x_1,\dots,x_r)
$$
in generators of (cohomological) degrees $|x_j|=1-|X_j|$.
Then we have that 
\[
  B(G^\K) \simeq (BG)^\K \simeq \G(H(BG)) = \MC_\bullet(\fg) \simeq B\Exp_\bullet(\fg),
\]
where for the last equality we used \cite[Theorem 5.2]{BerglundMC}.
Taking loop spaces we hence conclude that 
\[
  G^\K \simeq \Exp_\bullet(\fg),
\]
so that the exponential group of $\fg$ is a model for $G^\K$. 

% Compact Lie groups are nilpotent spaces of finite rational homology type and hence 
% \[
% \Omega_\sharp(G) \simeq \Omega_\sharp(G^\K)
% =\Omega_\sharp(\G C(\fg[-1]))
% \simeq C(\fg[-1]),
% \]
% where we used the Chevalley-Eilenberg complex $C(\fg[-1])$ of the degree-shifted abelian graded Lie algebra $\fg[-1]$, which is equipped with a natural Hopf algebra structure such that all generators $x_j$ are primitive.
% We hence have that $C(\fg[-1])\cong H(G)$ is a Hopf coalgebra model for $G$ (and $G^\K$).

Finally, we consider the Weil algebra, which is the Koszul complex of $H(G)$:
\[
W_G = (H(G)\otimes H(BG), D).
\]
The differential $D$ is defined on dgca generators as $Dx_i^*=X_i$, $DX_i=0$, denoting the generators of $H(G)=\K[x_1,^*,\dots,x_r^*]$ (the cohomology of $G$) by $x_i^*$.
It is clear that $W_G$ is an acyclic dgca. Furthermore, we have the obvious cofibration 
\[
  H(BG) \to W_G,
\]
and $W_G$ inherits an $H(G)$-comodule structure.
It follows by right adjointness of $\G$ that 
\begin{equation}\label{equ:uni fib model}
\G(W_G)\to \G(H(BG))
\end{equation}
is a Kan fibration. The space $\G(W_G)$ inherits the $G^\K$-action, which is principal, and by acyclicity of $W_G$ we have that $\G(W_G)$ is weakly contractible. Hence \eqref{equ:uni fib model} is a model for the classifying fibration $EG^\K\to BG^\K$. 

\begin{ex}
  The most relevant case for our present paper is $G=\SO(n)$.
  For $n=2k+1$ odd we have $r=k$, $W=S_r \wr \mathbb Z_2$ and $H(B\SO(n))=\R[\Pp_{4},\Pp_8,\dots, \Pp_{4k}]$ is a polynomial algebra generated by the Pontryagin classes $\Pp_{4j}$ in degree $4j$. To be explicit, we use the following conventions.
  Computing $H(BG)=H(*\sslash G)$ using the Cartan model we find that 
  \[
    H(B\SO(n)) = \R[u_1,\dots,u_r]^W,
  \]
  and we define the Pontryagin classes to be the elementary symmetric polynomials 
  \[
    \Pp_{4j} = \frac{1}{(2\pi)^{2j}}\sum_{1\leq i_1 < \cdots < i_j \leq r}
  u_{i_1}^2u_{i_2}^2\cdots u_{i_j}^2.
  \]
  
  The action of $\pi_0(O(n))=\Z_2$ on $H(B\SO(n))$ is trivial, so that in particular $H(B\SO(n))=H(BO(n))$.
  Also note that the rational homotopy groups of $\SO(n)$ are generated by classes $p_{4j-1}$ in (homological) degree $4j-1$, for $j=1,2,\dots, k$.
  
  Next consider the $\SO(n)$ for even $n=2k$. Then $r=k$ and the Weyl group is an index two subgroup 
  $$
  W=\{(\sigma, \tau_1,\dots,\tau_r) \mid \prod_i\sgn(\tau_i)=1\}\subset S_r\wr \mathbb Z_2
  $$
  of the wreath product. We have that
$$
H(B\SO(n))=\R[\Pp_{4},\Pp_8,\dots, \Pp_{4k-4}, E],
$$
where the Pontrygin classes $\Pp_{4j}$ are defined as before and $E=\frac{1}{(2\pi)^k} u_1\cdots u_k$ is the Euler class of degree $n$.
The action of $\pi_0(O(n))=\Z_2$ on $H(B\SO(n))$ is trivial on the $\Pp_{4j}$, but by sign on $E$. Hence $H(BO(n))=H(B\SO(n))^{\Z_2}=\K[\Pp_{4},\Pp_8,\dots, \Pp_{4k-4}, E^2]$.
  The rational homotopy groups of $\SO(n)$ are generated by classes $p_{4j-1}$ in homological degree $4j-1$, for $j=1,2,\dots, k-1$, and the Euler class $e$ in homological degree $n-1$.
  
  \end{ex}

  \begin{rem}
Note that we consider only connected groups $G$, excluding the case of $O(n)$. 
We could however re-introduce $O(n)$ by hand as follows.
The action of $\Z_2$ on $H(B\SO(n))$ (by acting trivially on the Pontryagin classes and by sign on the Euler class) descends to a $\Z_2$-action on the group $\SO(n)^\K$. We can then define
\[
  O(n)^\K := \Z_2\ltimes \SO(n)^\K.
\]
\end{rem}

\subsection{Framed operads and the framing product}\label{sec:framed operads}
Let $\op T$ be a topological operad with an action of a topological group (or monoid) $G$. Then one may build the corresponding framed operad $\op T\rtimes G$ defined such that the $\op T\rtimes G$-algebras are the algebras of $\op T$ in the category of $G$-spaces \cite[section 2]{SW}.
More explicitly, we have that 
\[
(\op T\rtimes G)(r)=(\op T\circ G)(r) = \op T(r) \times G^{\times r},
\]
with the natural composition structure defined using the $G$-action. Concretely, the composition is such that for $t\in \op T(r)$, $t'\in \op T(s)$, $g_1,\dots,g_r,g_1',\dots,g_s'\in G$, $j=1,\dots,r$ we have
\[
 (t,g_1,\dots, g_r) \circ_j (t',g_1',\dots, g_s')
:=
(t\circ_j (g\cdot t'), g_1,\dots,g_{j-1}, g_jg_1',\dots, g_j g_s', g_{j+1},\dots, g_r).
\]

We call the operation ``$\rtimes$'' which associates to an operad in $G$-spaces the corresponding framed operad the \emph{framing product}.
On the underlying symmetric sequences of spaces it is the same as the plethysm $\circ$.

Similarly, let $\op C$ be a Hopf cooperad with a Hopf coaction of the Hopf algebra $A$. 
Then we may build the corresponding framed Hopf cooperad $\op C\rtimes A$ such that 
\[
(\op C \rtimes A)(r) = (\op C \circ A)(r) = \op C(r) \otimes A^{\otimes r},
\]
with the cocomposition naturally defined using the Hopf coaction of $A$ on $\op C$.

\section{Real homotopy theory of group-operad-pairs}
\label{sec:rht pairs}
As mentioned in the introduction we need to work with homotopy operads and cooperads. There are several approaches to model homotopy operads, and two of them would be practical for our purposes. Either one can use dendroidal objects as developed by Moerdijk and Cisinski, or one can use algebraic (Lawvere) theories, as in the work of Bergner \cite{Bergner}, generalizing earlier work by Badzioch \cite{Badzioch}.
Here we follow the latter approach through algebraic theories, which is slightly simpler than but essentially equivalent to using dendroidal sets.
We focus solely on the multi-sorted algebraic theory of operads, although a significant part of the exposition would also work for general algebraic theories.

Finally, let us remark that equivariant homotopy theories for operads have also been developed in \cite{BergnerEquivariant} and \cite{BonventrePereira}, however their technical setups are not directly usable for us, unfortunately.

\subsection{Homotopy operads, following \texorpdfstring{\cite{Bergner}}{Bergner}}
Let $\cT$ be the following category:
\begin{itemize}
\item The objects of $\cT$ are symbols $C_{\underline k}$ for $\underline k=(k_0,k_1,\dots)$ a sequence of numbers $k_j\in \mathbb Z_{\geq 0}$ such that $k_r=0$ for $r\gg 0$.
We write $C_r$ for the object corresponding to the sequence $\underline k$ such that $k_r=1$, and $k_j=0$ for $j\neq r$.
We also denote 
\[
C_{\emptyset} := C_{(0,0,\dots)}.    
\]
\item Denote by $\FreeOp_{\underline k}$ the free symmetric operad in the category of sets with $k_r$ generators of arity $r$ for $r=1,2,\dots$. Then we set 
\[
  \Mor_{\cT}(C_{\underline j}, C_{\underline k})
  =
  \Mor_{\Set\Op}(\FreeOp_{\underline k}, \FreeOp_{\underline j}).
\]
\end{itemize}
In other words the opposite category of $\cT$ is the full subcategory of $\Set\Op$ on the finitely generated free operads.
Every object $C_{\underline k}$ of $\cT$ can be written as a product 
\[
  C_{\underline k} = \prod_{r\geq 0} C_{r}^{k_r}
\]
of the $C_{r}$.

One can check \cite{Bergner} that a simplicial operad is the same data as a product preserving functor 
\[
\cT\to \sSet.  
\]
Concretely, to a simplicial operad $\POp$ one associates the product preserving functor $\POp:\cT \to \sSet$ such that 
\[
\POp(C_{\underline k}) =
\prod_{r\geq 0} \POp(r)^{k_r}
= 
\Mor_{\Set\Op}(\FreeOp_{\underline k}, \POp).
\]
This motivates the following definition of a homotopy operad as a functor in $\sSet^\cT$ that preserves products up to weak equivalence.
\begin{defi}
A simplicial \emph{homotopy operad} is a functor $X:\cT\to \sSet$ such that the natural morphisms 
\[
X(C_{\underline k}) \to \prod_{r\geq 0} X(C_r)^{k_r}
\]
are weak homotopy equivalences,
\end{defi}

We denote the inclusion functor from simplicial operads to $\sSet^\cT$ by $N:\sOp \to \sSet^\cT$. This functor fits into an adjunction (see \cite[Proposition 5.9]{Bergner}) 
\begin{equation}\label{equ:tau N adj}
\tau \colon \sSet^\cT \rightleftarrows \sOp \colon N,
\end{equation}
thus realizing $\sOp$ as a reflective subcategory of $\sSet^\cT$.
The left-adjoint $\tau$ has the following explicit description.
For $X\in \sSet^\cT$ the simplicial operad $\tau(X)$ is generated by the simplicial sets $X(C_r)$, $r=0,1,2,\dots$ with the following relations:
\begin{itemize}
\item The $S_r$-action on the generators $X(C_r)$ agrees with the images under $X$ of the $S_r$-action on $C_r$ in $\cT$.
\item Let $f_j: C_r\times C_s\to C_{r+s-1}$ be the composition morphism in $\cT$.
Then we impose that the following diagram commutes
\[
\begin{tikzcd}
    X(C_r\times C_s) \ar{d} \ar{dr}{X(f_j)} &  \\
    X(C_r) \times X(C_s) \ar{r}{\circ_j} & X(C_{r+s-1}),
\end{tikzcd}
\]
where $\circ_j$ is the operadic composition in $\tau(X)$.
\item Let $\epsilon: C_\emptyset\to C_1$ be the unit morphism.  
Then we impose that the image of $X(\epsilon):X(C_\emptyset)\to X(C_1)$ is identified with the operadic unit in $\tau(X)$. 
\end{itemize}

We can equip $\sSet^\cT$ with the projective model structure, see Theorem \ref{thm:proj exists}. Then the adjunction \eqref{equ:tau N adj} is a Quillen adjunction since $N$ obviously preserves weak equivalences and fibrations.

\subsubsection{Bousfield localization and Quillen equivalence}
Consider the $\Set$-operad $\FreeOp_{\underline k}$ as a simplicial operad, using the natural inclusion $\Set \to \sSet$. Then consider the object $\tilde C_{\underline k}:=N(\FreeOp_{\underline k})$ of $\sSet^{\cT}$. 
Let $L$ be the set of morphisms of $\sSet^{\cT}$
\[
\tilde C_{\underline k} \to \prod_{r\geq 0} \tilde C_r^{k_r}
\]
Then one may define a second model structure on $\sSet^{\cT}$ by left Bousfield localization at $L$. We denote the resulting model category by $L\sSet^{\cT}$. Note that $\tau(L)$ consists of isomorphisms, and hence Bousfield-localizing $\sOp$ at $\tau(L)$ leaves its model category structure unchanged. But since Bousfield localization preserves Quillen adjunctions the Quillen adjunction \eqref{equ:tau N adj} lifts to a Quillen adjunction (see \cite[Proposition 5.11]{Bergner})
\begin{equation}\label{equ:tau N adj l}
    \tau \colon L\sSet^\cT \rightleftarrows \sOp \colon N.
\end{equation}

\begin{thm}[{Bergner \cite[Theorem 5.13]{Bergner}}]
The adjunction \eqref{equ:tau N adj l} is a Quillen equivalence.
\end{thm}

We will only need the following consequence:
\begin{cor}\label{cor:bergner}
Let $\POp$ be a fibrant simplicial operad. Then the derived adjunction counit 
\[
\tau^h(N\POp) \to \POp    
\]
is a weak equivalence of simplicial operads.
\end{cor}
We note that it is clear by reflectivity that the non-derived adjunction counit is an isomorphism. Also, note that the cofibrations in $\sSet^\cT$ and $L\sSet^\cT$ are the same, so that the corollary holds for either of the adjunctions \eqref{equ:tau N adj} or \eqref{equ:tau N adj l}.

\subsection{Five model categories of pairs}
\subsubsection{Groups and operads}
Let $\sGrp\sOp$ be the category of pairs $(G,\POp)$ consisting of a simplicial group $\POp$ and an operad in $G$-spaces $\POp\in G\Op$.
The morphisms $(G,\POp)\to (H,\QOp)$ between two such pairs are pairs $(\phi,f)$ consisting of a morphism of simplicial groups $\phi:G\to H$ and a morphism $f:\POp\to \QOp=\phi^*\QOp$ of $G$-operads, equipping $\QOp$ with a $G$-action through $\phi$. Via the induction/restriction adjunction the datum $f$ is equivalent to a morphism of $H$-operads $f^\sharp: \phi_! \POp\to \QOp$.
\begin{prop}\label{prop:sGrpsOp model cat}
The category $\sGrp\sOp$ is equipped with a cofibrantly generated model structure with the following distinguished classes of morphisms:
\begin{itemize}
    \item The weak equivalences (resp. fibrations) are the morphisms that are object- and aritywise weak equivalences (resp. fibrations) of simplicial sets.
    \item The cofibrations are the morphisms that have the left-lifting property with respect to the acyclic fibrations.
\end{itemize}
\end{prop}
\begin{proof}
The stated model category structure is the one obtained by transfer along the free/forgetful adjunction 
\[
  \sSet \times \prod_{r\geq 0} \sSet \rightleftarrows  \sGrp\sOp.
\]
To see that the transfer yields a well-defined model structure we extend the analogous constructions of \cite[section II.8.2]{F} or \cite{BMAxiomatic} of model category structures on $\sOp$. The forgetful functor preserves filtered colimits and hence it suffices to show that $\sGrp\sOp$ has a fibrant replacement functor 
and functorial path objects, see \cite[sections 2.5, 2.6]{BMAxiomatic}.
For the fibrant replacement functor we may take the object- and aritywise application of a symmetric monoidal fibrant replacement functor for $\sSet$.
Also note that $\sGrp\sOp$ is naturally powered over $\sSet$, and hence we may take the powering with an interval $(-)^I$ as our functorial path object.
\end{proof}

\subsubsection{Groups and homotopy operads}
Similarly, let $\sGrp\sSet^\cT$ be the category of pairs $(G,X)$, with a $G$ a simplicial group and $X\in G\sSet^{\cT}$.
Morphisms $(G,X)\to (H,Y)$ are then pairs $(\phi,f)$ consisting of a morphism of simplicial groups $\phi:G\to H$ and a morphism $f:X\to Y=\phi^*Y$ in $G\sSet^{\cT}$, or equivalently a morphism $f^\sharp: \phi_! \POp\to \QOp$ in $H\sSet^{\cT}$.

\begin{prop}\label{prop:sGrpsSetT model cat}
    The category $\sGrp\sSet^\cT$ is equipped with a cofibrantly generated model structure with the following distinguished classes of morphisms:
    \begin{itemize}
        \item The weak equivalences (resp. fibrations) are the morphisms that are objectwise weak equivalences (resp. fibrations) of simplicial sets.
        \item The cofibrations are the morphisms that have the left-lifting property with respect to the acyclic fibrations.
    \end{itemize}
\end{prop}
\begin{proof}
The proof is analogous to that of Proposition \ref{prop:sGrpsOp model cat}, except that we transfer the model category structure along the free/forgetful adjunction 
\[
\sSet \times \prod_{\underline k} \sSet \rightleftarrows  \sGrp\sSet^\cT.
\]
\end{proof}

We have an adjunction 
\begin{equation}\label{equ:id tau adj}
  (id, \tau) \colon \sGrp\sSet^\cT \rightleftarrows \sGrp\Op \colon (id, N)
\end{equation}
by applying the adjunction $(\tau, N)$ of \eqref{equ:tau N adj} to the second member of the pairs.
The adjunction \eqref{equ:id tau adj} is evidently a Quillen adjunction since the right-adjoint preserves weak equivalences and fibrations, by definition of the model structures.

\begin{prop}\label{prop:adj we}
The derived adjunction counit of the Quillen adjunction \eqref{equ:id tau adj} is a weak equivalence.
\end{prop}
For the proof we need:
\begin{lemma}\label{lem:pre prop adj we}
Let $(G,X)$ be a cofibrant object of $\sGrp\sSet^\cT$.
Then the object $X$ is cofibrant in $\sSet^\cT$.
\end{lemma}
\begin{proof}
Consider the over-category $\sGrp\sSet^\cT_{/(G, *)}$ with the slice model structure.
The canonical morphism $(G,X)\to (G,*)$ is cofibrant in this category by definition of the  slice model structure.
Next, consider the category $G\sSet^{\cT}$ that we equip with the projective model structure, using the model structure on $G\sSet$ of Proposition \ref{prop:GsSet}.
We have an adjunction
\[
   L \colon  \sGrp\sSet^\cT_{/(G, *)} \rightleftarrows G\sSet^{\cT} \colon R,
\] 
such that the left-adjoint sends a morphism $(H,Y)\to (G,*)$ to the induced object $\Ind_H^G Y$ and the right-adjoint sends an object $X$ to the pair $(G,X)$.
It is clear that $(L,R)$ is a Quillen adjunction since the right-adjoint preserves weak equivalences and fibrations.
But then $L$ preserves cofibrant objects so that $X$ is cofibrant in $G\sSet^{\cT}$.
Next we have a Quillen adjunction 
\[
  F \colon  G\sSet \rightleftarrows \sSet \colon \Map_{\sSet}(G,-)
\]
with $F$ the forgetful functor.
Hence by Theorem \ref{thm:proj functorial} we obtain a Quillen adjunction between the functor categories
\[
  F \colon  G\sSet^\cT \rightleftarrows \sSet^\cT \colon \Map_{\sSet}(G,-).
\]
This means that $X$ is cofibrant as an object of $\sSet^\cT$.
\end{proof}

\begin{proof}[Proof of Proposition \ref{prop:adj we}]
Let $(G,\POp)$ be a fibrant object of $\sGrp\sOp$.
Let $(H, X)$ be a cofibrant replacement of $(G,N(\POp))$ in $\sGrp\sSet^\cT$.
Then by Lemma \ref{lem:pre prop adj we} we have that $X$ is also a cofibrant replacement of $N(\POp)$ in $\sSet^\cT$.
Hence by Corollary \ref{cor:bergner} the derived adjunction counit 
\[
  \tau(X)\to \POp  
\] 
is a weak equivalence.
\end{proof}

\subsubsection{Reduced simplicial sets and homotopy operads}
Next consider the category $\sSet_0\sSet_{/}^\cT$ whose objects are pairs $(X,Y)$ with $X\in \sSet_0$ a reduced simplicial set and $Y\in \sSet_{/X}^\cT$ an $\cT$-diagram in the overcategory.
Morphisms $(\phi, f): (X,Y)\to (Z,W)$ are pairs consisting of a morphism $\phi:X\to Z$ in $\sSet_0$ and a morphism $f:\phi_*Y\to W$ in $\sSet_{/Z}^\cT$. Note that by adjunction the morphism $f$ is equivalent data to a morphism $f^\sharp:Y\to \phi^*W$.

\begin{prop}\label{prop:sSet0sSetT model cat}
    The category $\sSet_0\sSet_{/}^\cT$ is equipped with a cofibrantly generated model structure with the following distinguished classes of morphisms:
    \begin{itemize}
        \item The weak equivalences (resp. fibrations) are the morphisms that are objectwise weak equivalences (resp. fibrations) in $\sSet_0$ or $\sSet$.
        \item The cofibrations are the morphisms that have the left-lifting property with respect to the acyclic fibrations.
    \end{itemize}
\end{prop}
\begin{proof}
We transfer the model structure along the adjunction 
\[
\sSet_0 \times \prod_{\underline k} \sSet \rightleftarrows  \sSet_0\sSet_{/}^\cT.
\]
Here the right-adjoint preserves all colimits. Hence by \cite[Theorem 7.4.4 and adjacent remark]{HeutsMoerdijk} it suffices to check that the images under the adjunction unit of the generating cofibrations on the left-hand side are again acyclic cofibrations. But this is clear.
\end{proof}

The adjunctions \eqref{equ:ell bar W adj} and \eqref{equ:times WG adj} combine into an adjunction of the category of pairs 
\begin{equation}\label{equ:ell times adj}
    (\ell, \times) \colon \sSet_0\sSet_{/}^\cT \rightleftarrows \sGrp\sSet^\cT \colon (\bar W, \sslash ).
\end{equation}
Here the left-adjoint sends a pair $(G, X)$ of a simplicial group $G$ and an object $X$ of $G\sSet^{\cT}$ to the pair $(\bar WG, X\sslash G)$.
The right-adjoint sends a pair $(B, Y)$ to the pair $(\ell B, Y\times_B B_\eta)$, with $B_\eta$ the principal $\ell B$-fibration over $B$, defined as the pullback via the adjunction unit $\eta:\mathit{id} \Rightarrow \bar W\circ \ell$ 
\[
\begin{tikzcd}
B_\eta \ar{d}\ar{r} & W\ell B \ar{d} \\
B \ar{r}{\eta} & \bar W\ell B
\end{tikzcd}.
\]   

\begin{prop}\label{prop:pair sset sset equiv}
The pair of functors \eqref{equ:ell times adj} is a Quillen equivalence.
\end{prop}
\begin{proof}
The right adjoint of \eqref{equ:ell times adj} is formed by applying the right adjoints of the Quillen pairs \eqref{equ:ell bar W adj} and \eqref{equ:times WG adj} objectwise.
Hence it also preserves (acyclic) fibrations and \eqref{equ:ell times adj} is a Quillen adjunction.

To check that \eqref{equ:ell times adj} is a Quillen equivalence we check that the derived unit and counit are weak equivlaences.
Let $(B, X)$ be a fibrant and cofibrant object of $\sSet_0\sSet_{/}^\cT$.
Then $(\ell B, X \times_B B_\eta)$ is again fibrant in $\sGrp\sSet^\cT$, since any object of $\sGrp$ is fibrant and pullbacks along fibrations preserve fibrant objects. (Here we use that $B$ is also fibrant in $\sSet$ by \cite[Corollary V.6.8]{GoerssJardine}.)
Hence the derived adjunction agrees with the non-derived adjunction unit and we need to check that
\begin{equation}\label{equ:prop ps proof 1}
(B, X) \to (\bar W\ell B, (X \times_B B_\eta)\sslash \ell B) 
\end{equation}
is a weak equivalence. But since $X \times_B B_\eta=X\times_{\bar W\ell B} W\ell B$ the morphism is obtained by applying objectwise the (derived) adjunction units of \eqref{equ:ell bar W adj} and \eqref{equ:times WG adj}. Hence, since \eqref{equ:ell bar W adj} and \eqref{equ:times WG adj} are Quillen equivalences, \eqref{equ:prop ps proof 1} is  a weak equivalence.

Finally, let $(G,Y)$ be a fibrant object of $\sGrp\sSet^\cT$. Since the left adjoint of \eqref{equ:ell times adj} preserves all weak equivalences, it is sufficient to check that the plain adjunction counit
\begin{equation}\label{equ:prop ps proof 2}
    (\ell \bar W G, (Y\sslash G) \times_{\bar WG} (\bar WG)_\eta ) \to  (G, Y) 
\end{equation}
is a weak equivalence. The morphism $\ell \bar W G\to G$ is a weak equivalence since \eqref{equ:ell bar W adj} is a Quillen equivalence.
Denoting $H=\ell\bar WG$, the second part of \eqref{equ:prop ps proof 2} factorizes as 
\[
    (Y\sslash G) \times_{\bar WG} (\bar WG)_\eta
    \to 
    (Y\sslash H) \times_{\bar WG} (\bar WG)_\eta
    =
    (Y\sslash H) \times_{\bar W H}
    W H
    \to 
    Y.
\]
Here the left-hand arrow is a weak equivalence since so is $Y\sslash G\to Y\sslash H$, and the 
right-hand arrow is a weak equivalence since \eqref{equ:times WG adj} is a Quillen equivalence.
Hence \eqref{equ:prop ps proof 2} is a weak equivalence.
\end{proof}

\subsubsection{Pointed simplicial sets and homotopy operads}
Let $\sSet_*\sSet_{/}^\cT$ be the category of pairs $(B, X)$, with $B$ a pointed simplicial set and $X\in \sSet_{/B}^\cT$.
Morphisms of $\sSet_*\sSet_{/}^\cT$ are pairs of maps $(\phi, f);(B, X)\to (C, Y)$ with $\phi:B\to C$ a morphism of pointed simplicial sets and $f:\phi_* X\to Y$ a morphism in $\sSet_{/C}^\cT$.

\begin{prop}\label{prop:sSetst sSet model cat}
    The category $\sSet_*\sSet_{/}^\cT$ is equipped with a cofibrantly generated model structure with the following distinguished classes of morphisms:
    \begin{itemize}
        \item The weak equivalences (resp. fibrations) are the morphisms that are objectwise weak equivalences (resp. fibrations) of simplicial sets.
        \item The cofibrations are the morphisms that have the left-lifting property with respect to the acyclic fibrations.
    \end{itemize}
\end{prop}
\begin{proof}
    We construct the model structure in 2 steps. First consider the category $\sSet\sSet_{/}^\cT$ of pairs $(B, X)$, with $B$ a plain (non-pointed) simplicial set and $X\in \sSet_{/B}^\cT$.
    This category is a diagram category, 
    \[
    \sSet\sSet_{/}^\cT \cong \sSet^{\cT^+},
    \]
    where $\cT^+$ is a category obtained by adding one object to $\cT$, and a unique arrow from any other object.
    As a diagram category, $\sSet\sSet_{/}^\cT$ is equipped with the projective model structure by Theorem \ref{thm:proj exists}. Finally, 
    \[
        \sSet_*\sSet_{/}^\cT = (\sSet\sSet_{/}^\cT)^{(*,\emptyset)/} 
    \]
    can be seen as an undercategory and hence is equipped with the coslice model structure.
\end{proof}

There is an obvious adjunction 
\begin{equation}\label{equ:iota id adj}
    (\iota, id) \colon \sSet_0\sSet_{/}^\cT \rightleftarrows \sSet_*\sSet_{/}^\cT \colon (E_1, \epsilon_*).
\end{equation}
by applying the adjunction \eqref{equ:iota E1 adj} to the first component $B$ of the pairs, and base-changing the second component accordingly.
The right adjoint preserves (acyclic) fibrations, since both components are right Quillen functors. Hence the adjunction \eqref{equ:iota id adj} is Quillen.

\subsubsection{Dg commutative algebras}
Finally, we consider the category $\dgca_*(\dgca^{/})^{\cT^{op}}$ of pairs $(A, B)$, where $A\in \dgca_*$ is an augmented dg commutative algebra and $B\in (\dgca^{A/})^{\cT^{op}}$ is a functor 
\[
  B:  \cT^{op} \to \dgca^{A/}
\]
into the under-category of $A$, considered as a plain (non-augmented) dg commutative algebra.
\begin{prop}\label{prop:dgcast dgca model cat}
    The category $\dgca_*(\dgca^{/})^{\cT^{op}}$ is equipped with a model category structure with the following distinguished classes of morphisms:
    \begin{itemize}
        \item The weak equivalences (resp. cofibrations) are the morphisms that are objectwise weak equivalences (resp. cofibrations) of dg commutative algebras.
        \item The fibrations are the morphisms that have the right-lifting property with respect to the acyclic cofibrations.
    \end{itemize}
\end{prop}
\begin{proof}
We proceed analogously, yet dually to the proof of Proposition \ref{prop:sSetst sSet model cat}.
First, let $\dgca(\dgca^{/})^{\cT^{op}}$ be defined as the category of pairs $(A, B)$, where $A\in \dgca$ is a plain dg commutative algebra and $B\in (\dgca^{A/})^{\cT^{op}}$,
Then we have that 
\[
    \dgca(\dgca^{/})^{\cT^{op}} = \dgca^{(\cT^+)^{op}}  
\]
is a diagram category, with $\cT^+$ defined as in the proof of Proposition \ref{prop:sSetst sSet model cat}.
We may hence use Theorem \ref{thm:inj exists} to equip $\dgca^{(\cT^+)^{op}}$ with the injective model structure.
Then 
\[
    \dgca_*(\dgca^{/})^{\cT^{op}} = \left(\dgca^{(\cT^+)^{op}} \right)_{/(\Q, 0)} 
\]
is naturally an over-category and hence equipped with the slice model structure, which has the distinguished classes of morphisms stated in the proposition.
\end{proof}

\subsection{Rational homotopy theory of pairs}
The categories of pairs above and the Quillen adjunctions between them are summarized in the following diagram, with the left-adjoints marked by dashed arrows.
\begin{equation}\label{equ:big adj}
\begin{tikzcd}
\sGrp\sOp \ar[shift left]{d}{N} \\
\sGrp\sSet^{\cT} \ar[shift left]{d}{(\bar W,\sslash )}\ar[shift left, dashed]{u}{\tau} \\
\sSet_0\sSet_{/} \ar[shift left, dashed]{d}{\iota}\ar[shift left, dashed]{u}{(\ell,\times)} \\
\sSet_*\sSet_{/} \ar[shift left, dashed]{d}{\Omega}\ar[shift left]{u}{E_1} \\
(\dgca_*(\dgca^{/})^{\cT^{op}})^{op}\ar[shift left]{u}{\G}
\end{tikzcd}
\end{equation}

Now let $(G,\POp)\in \sGrp\sOp$ be a fibrant pair consisting of a connected simplicial group $G$ and a simplicial operad $\POp$ acted upon by $G$.
Then we define the object $(G^\K,\POp^\K)\in \sGrp\sOp$ (the rationalization for $\K=\Q$) by passing through the derived functors of all the adjunctions of \eqref{equ:big adj}.
Concretely, we pick:
\begin{itemize}
\item A cofibrant replacement $(B,X)\to (\bar WG, N(\POp)\sslash G)$ in $\sSet_0\sSet_{/}^\cT$.
\item A cofibrant replacment $(A,Y) \to (\Omega(B), \Omega(X))$ in $\dgca_*(\dgca^{/})^{\cT^{op}}$.
\item A cofibrant replacement $(C, Z)\twoheadrightarrow (E_1\G A, \G Y)$ in $\sSet_0\sSet_{/}^\cT$.
\end{itemize}
Then we define 
\begin{align*}
    (G^\K, \POp^\K):= (\ell C, \tau(Z\times_{C} C_\eta ) ).
\end{align*}
From the adjunction (co)units we may obtain a zigzag connecting $(G,\POp)$ with $(G^\K,\POp^\K)$ in $\sGrp\sOp$.
To this end, fix a solution of the following lifting problem:
\[
     \begin{tikzcd}
        \emptyset \ar[hookrightarrow]{d} \ar{rr} & & (C, Z) \ar[twoheadrightarrow]{d}{\sim} \\
        (B,X) \ar{r} \ar[dashed]{urr} 
        & (E_1\G \Omega B, \G\Omega X) \ar{r}
        & (E_1\G A, \G Y)
     \end{tikzcd}.
\]
Then we have the zigzag 
\begin{equation}\label{equ:rht pair zigzag}
(G,\POp) 
\xleftarrow{\sim} (\ell B, \tau(X\times_B B_\eta))
\xrightarrow{r} (\ell C, \tau(Z\times_C C_\eta))
= (G^\K, \POp^\K),
\end{equation}
where the left-hand arrow is a weak equivalence by Propositions \ref{prop:adj we} and \ref{prop:pair sset sset equiv}.

In our specific setting, the above definition of $(G^\K, \POp^\K)$ can furthermore be slightly simplified.
Namely, the functors $\iota$ and $\Omega$ preserve all weak equivalences. Hence we have that 
\[
(\Omega B,\Omega X) \simeq (\Omega \bar W G, \Omega (N\POp \sslash  G)).
\]
Furthermore, as discussed in section \ref{sec:sGrp rht}, we may pick the cofibrant dgca $A$ above such that $A^0=\K$. Then $\G A$ is automatically a reduced simplicial set and the application of $E_1$ may be omitted.
Finally, the functor $(\ell,\times)$ also preserves all weak equivalences. 
Let us summarize this in the following Lemma.
\begin{lemma}\label{lem:rat simpl}
Suppose that $(A, Y)$ is a cofibrant object of $\dgca_*(\dgca^{/})^{\cT^{op}}$ with $A^0=\K$, weakly equivalent to $(\Omega\bar WG, \Omega (N\POp\sslash G))$.
Then if $(G',Z)$ is a cofibrant replacement of $(\ell\G A, \G Y\times_{\G A} (\G A)_\eta)$ in $\Grp\sSet_{/}^\cT$, we have that $(G', \tau Z)$ is weakly equivalent to $(G^\K, \POp^\K)$ in $\sGrp\sOp$.\hfill\qed
\end{lemma}

\subsection{Fresse's rational homotopy theory of operads}
\label{sec:Fresse rht}
We note that B. Fresse has developed a rational homotopy theory of (non-equivariant) operads \cite{Frextended, F} that we briefly sketch here. First, the right adjoint $\G$ of the rational homotopy adjunction \eqref{equ:rht adj} is symmetric monoidal. Hence the arity-wise application yields a functor $\G: \dgHOpc\to \sOp$ from the category of dg Hopf cooperads to the category of simplicial operads. 
\begin{thm}[{Fresse \cite[Theorems 1.6 and 1.7]{Frextended}}]
There is a cofibrantly generated model category structure on the category of dg Hopf cooperads $\dgHOpc$ with the following properties.
\begin{itemize}
\item The weak equivalences are the morphisms that are arity-wise quasi-isomorphisms. 
\item Let $f:\COp\to \DOp$ be a cofibration in $\dgHOpc$. Then the part of arity $r$, $\COp(r)\to \DOp(r)$ is a cofibration in the category $\dgca$ of dg commutative algebras for each $r=1,2,\dots$. 
\end{itemize}
\end{thm}

Fresse shows that the adjunction \eqref{equ:rht adj} extends to a Quillen adjunction 
\[
\Omega_\sharp \colon \sOp \rightleftarrows (\dgHOpc)^{op} \colon \G.
\]
Here the functor $\Omega_\sharp$ is defined as the left-adjoint of $\G$. It can be considered as an operadic upgrade of the functor $\Omega$ of \eqref{equ:rht adj}.

\begin{thm}[{Fresse \cite[Theorem 2.3]{Frextended}}]\label{thm:fresse comparison}
Let $\POp$ be a cofibrant simplicial operad such that $\POp(1)$ is connected and each $\POp(r)$ has finite dimensional $\K$-homology in each degree. Then the natural comparison morphism 
\[
\Omega_\sharp(\POp)(r) \to \Omega(\POp(r))   
\]
is a weak equivalence for each $r$.
\end{thm}

As a corollary we obtain:
\begin{cor}\label{cor:omega sharp comparison}
Let $\POp$ be a cofibrant simplicial operad. Let $\COp$ be a cofibrant dg Hopf cooperad such that $H^0(\COp(r))=\K$, $H^1(\COp(r))=0$ and $H^k(\COp(r))$ is finite dimensional for each $r$ and $k$.
Suppose that $\G(\COp)$ is connected to $\POp$ by a zigzag of rational homotopy equivalences 
\begin{equation}\label{equ:P C zigzag 0}
\POp \to \bullet \leftarrow \G(\COp).  
\end{equation}
Then $\Omega_\sharp(\POp)$ is connected to $\COp$ by a zigzag of quasi-isomorphisms of dg Hopf cooperads
\[
  \Omega_\sharp(\POp) \to \bullet \leftarrow \COp.  
\]
\end{cor}
\begin{proof}
  Cofibrancy of $\COp$ implies that each $\COp(r)$ is cofibrant.
Also note that the further assumptions on $\COp$ imply that each dg commutative algebra $\COp(r)$ is quasi-isomorphic to a minimal Sullivan model $(S(V),d)$ with the generating graded vector space $V$ degree-wise finite dimensional. In particular, $\COp(r)$ is weakly equivalent to a nilpotent cell complex of finite type in the sense of \cite[section II.7.3]{F}. The statement \cite[Proposition II.7.3.9]{F} then implies that the derived adjunction counit 
\[
  \Omega^h(\G(\COp(r)))\to \COp(r)
\]
is a weak equivalence.
This implies that a(ny) cofibrant resolution $\QOp\xrightarrow{\sim} \G(\COp)$ satisfies the assumptions of Theorem \ref{thm:fresse comparison}. Hence, using Theorem \ref{thm:fresse comparison} we also conclude that the derived adjunction counit 
\[
  \Omega_\sharp(\QOp)\to \COp
\]
is a weak equivalence.

Now consider the zigzag \eqref{equ:P C zigzag 0}. By passing to appropriate resolutions we may assume that the intermediate simplicial objects in our zigzag are all cofibrant, to obtain a zigzag of rational weak equivalences 
\begin{equation}\label{equ:P C zigzag}
  \POp \to \bullet \leftarrow \QOp \to \G(\COp),
\end{equation}
with $\QOp\xrightarrow{\sim} \G(\COp)$ again a cofibrant replacement of $\G(\COp)$.
Note that the assumptions of Theorem \eqref{thm:fresse comparison} are then satisfied for all operads in the zigzag except possibly $\G(\COp)$. Apply $\Omega_\sharp(-)$ to the zigzag \eqref{equ:P C zigzag} and compose with the adjunction counit to obtain a zigzag 
\[
  \Omega_\sharp(\POp)\to \bullet \leftarrow \Omega_\sharp(\QOp) \to \Omega_\sharp(\G(\COp)) \to \COp.
\]
By Theorem \eqref{thm:fresse comparison} and the assumption all the morphisms connecting $\Omega_\sharp(\POp)$ and $\Omega_\sharp(\QOp)$ are weak equivalences (quasi-isomorphisms) of dg Hopf cooperads, and by our initial considerations above so is the derived adjunction counit $\Omega_\sharp(\QOp)\to \COp$. Hence the desired zigzag of weak equivalences of dg Hopf cooperads is 
\[
  \Omega_\sharp(\POp)\to \bullet \leftarrow \Omega_\sharp(\QOp) \to \COp.
\]
\end{proof}

\section{Kontsevich's graph complex and graphs operad}
\label{sec:k graphs}
\subsection{Definition}\label{sec:GraDefinitions}
We recall here the definition of M. Kontsevich's graph complexes and graph cooperads. The original definitions may be found in \cite{K2, Knoncomm}.

For every $r\geq 1$ denote by 
\[
\gra_n(r) = \K[\omega_{ij}=(-1)^{n}\omega_{ji}\mid 1\leq i, j \leq n]    
\]
the free graded commutative algebra generated by symbols $\omega_{ij}=(-1)^n\omega_{ji}$ of cohomological degree $n-1$.
Every monomial in the $\omega_{ij}$ may be thought of as a graph on $r$ vertices, with each $\omega_{ij}$ representing an edge between vertices $i$ and $j$. 
These dg commutative algebras $\gra_n(r)$ carry an obvious $S_r$-action by permuting the (indices of the) generators. They furthermore assemble into a Hopf cooperad $\gra_n$.
The reduced cooperadic cocomposition
\[
\Delta_k : \gra_n(r) \to \gra_n(r-k+1)\otimes \gra_n(k) 
\]
is defined on the algebra generators $\omega_{ij}$ as 
\[
\Delta_k(\omega_{ij})=
\begin{cases}
  1\otimes \omega_{ij} + \omega_{11}\otimes 1 & \text{if $i,j\leq k$} \\
  \omega_{1(j-k+1)}\otimes 1 & \text{if $i\leq k$, $j> k$} \\
  \omega_{(i-k+1)(j-k+1)}\otimes 1 & \text{if $i,j> k$}
\end{cases}.
\]
Graphically, this corresponds to cutting out a subgraph on the vertices $1,\dots,k$.

We next define
\[
\fGraphs_n(r) := 
\bigoplus_{k\geq 0}  \left(\gra_n(r+k)\otimes \K[n]^{\otimes k}  \right)_{S_k}, 
\]
where the symmetric group acts diagonally on $\gra_n(r+k)$ and on $\K[n]^{\otimes k}$ by permuting factors, with Koszul signs.
Elements of the $k$-th summand of $\fGraphs_n(r)$ may be thought of as linear combinations of graphs with $r$ numbered "external" vertices and $k$ unnumbered "internal" vertices.
\[
\begin{tikzpicture}
\node[ext] (v1) at (0,0) {$\scriptstyle 1$};
\node[ext] (v2) at (0.7,0) {$\scriptstyle 2$};
\node[ext] (v3) at (1.4,0) {$\scriptstyle 3$};
\node[int] (w1) at (0.7,0.7) {};
\draw (w1) edge (v1) edge (v2) edge (v3) (v2) edge (v3)
(v3) edge[loop] (v3);
\end{tikzpicture} 
\]
The graded Hopf cooperad structure on $\gra_n$ naturally induces a Hopf cooperad structure on $\fGraphs_n$.

Also consider the total coinvariant space 

\[
\fG_n := \bigoplus_{k\geq 1} \left(\gra_n(k)\otimes \K[n]^{\otimes k}[-n] \right)_{S_k}.
\]
The total coinvariant space of any cooperad carries a graded Lie coalgebra structure. Instead of describing the Lie coalgebra sructure explicitly, it is notationally slightly easier to describe the Lie algebra structure on the dual space 
\[
\fGC_n := \fG_n^* \cong \prod_{k\geq 1} \left(\gra_n^*(k)\otimes \K[-n]^{\otimes k}[n] \right)_{S_k}.
\]
For two elements $p\in \left(\gra_n^*(k)\otimes \K[-n]^{\otimes k}[n] \right)_{S_k}$, $q\in \left(\gra_n^*(r)\otimes \K[-n]^{\otimes r}[n] \right)_{S_r}$ the Lie bracket is 
\[
[p,q] := \sum_{j=1}^k p\circ_j q - (-1)^{|p||q|} \sum_{j=1}^r q\circ_j p,
\]
where $\circ_j$ is the operation naturally induced from the operadic composition.

The elements of $\fGC_n$ can be identified combinatorially as $\K$-linear series of graphs.
It is furthermore not hard to see that the specific graph 
\[
L =\frac 12
\begin{tikzpicture}
    \node[int] (v) at (0,0){};
    \node[int] (w) at (0.7,0){};
    \draw (v) edge (w);
\end{tikzpicture}
\]
is a Maurer-Cartan element, i.e., it is of degree $+1$ and $[L,L]=0$. 
Hence we may equip $\fGC_n$ with the differential 
\[
\delta = [L,-],
\]
and dually $\fG_n$ with the differential $d=L\cdot$ given by the action of $L$. 

The dg Lie algebra $\fGC_n$ has several dg Lie subalgebras 
\[
    \GC_n \subset  \GC_n^+ \subset \GC_n^2 \subset \fGCc_n\subset \fGC_n.
\]
Here $\fGCc_n\subset \fGC_n$ is the dg Lie subalgebra of series of connected graphs, $\GC_n^2\subset \fGCc_n$ is the dg Lie subalgebra of series of connected and 1-vertex irreducible\footnote{A connected graph is called 1-vertex irreducible or biconnected if it remains connected after deleting any one vertex.} graphs whose vertices are at least bivalent and $\GC_n\subset \GC_n^2$ is the dg Lie subalgebra of series of graphs whose vertices are at least trivalent. The dg Lie algebra 
$$
\GC_n^+=  \K\tadpole \ltimes \GC_n
$$ 
is a one-dimensional extension of $\GC_n$ obtained allowing the special tadpole graph with one vertex and one edge in the series.
This is justified since the Lie bracket with this graph is combinatorially the action of adding one edge, and this does not invalidate the trivalence condition.

Dually, the dg Lie coalgebra $\fG_n$ has quotients 
\[
\fG_n \to \sG_n^2 \to \sG_n^+ \to \sG_n,
\]
where $\sG_n^2$ is obtained from $\fG_n$ by setting to zero all disconnected graphs, all non-1-vertex-irreducible graphs, and all graphs with $<2$-valent vertices. $\sG_n^+$ is further obtained by setting also graphs with two-valent vertices to zero, except for the tadpole graph, and finally in $\sG_n$ we also set the tadpole graph to zero.

Similarly, the graded Hopf cooperad $\fGraphs_n$ has quotients 
\begin{equation}\label{equ:graphs quotients}
    \fGraphs_n\twoheadrightarrow \Graphs_n^2 \twoheadrightarrow \Graphs_n,
\end{equation}
where $\Graphs_n^2$ is obtained by setting to zero all graphs with internal vertices of valence $\leq 1$, and all graphs with connected components without external vertices.
In turn, the Kontsevich cooperad $\Graphs_n$ is obtained by also setting to zero all graphs that have an internal vertex of valence 2. 

Furthermore, it has been observed in \cite{Will} (for a detailed account see \cite{DolWill}) that the graded Lie coalgebra $\fG_n$ coacts on the cooperad $\fGraphs_n$.
Dually, the graded Lie algebra $\fGC_n$ acts on $\fGraphs_n$ compatibly with the graded cooperad structure, and also on the dual operad $\fGraphs_n^*$.
We describe here the latter action of $\fGC_n$ on $\fGraphs_n^*$, from which the other actions can be deduced by duality. For $\gamma\in \fGC_n$ 
we denote by $\gamma_1\in\fGraphs_n^*(1)$ the element obtained by summing over all ways of making one vertex the external vertex.
Then for $\Gamma\in \fGraphs_n^*(r)$ with $k$ internal vertices the action is defined by the formula
\[
\gamma\cdot \Gamma = \gamma_1\circ_1 \Gamma
-(-1)^{|\gamma||\Gamma|} \sum_{j=1}^r \Gamma \circ_j \gamma - (-1)^{|\gamma||\Gamma|} \Gamma\bullet \gamma,
\]
with $\circ_j$ the operadic composition in $\fGraphs_n^*$ and where
\[
\Gamma\bullet \gamma = \sum_{j=r+1}^{r+k} \Gamma\circ_j \gamma,
\]
is obtained by inserting $\gamma$ at the internal vertices of $\Gamma$, using the underlying operadic composition on $\gra_n^*$.
One can check that this action is compatible with the operad structure, in the sense that $\Gamma\mapsto \gamma\cdot\Gamma$ is an operadic derivation.
Unfortunately, it is generally not compatible with the Hopf structure, i.e., it is not a coderivation with respect to the cocommutative coalgebra structure.
However, it is a coderivation if $\gamma$ is a linear combination of connected 1-vertex-irreducible graphs, and this is the reason for introducing this requirement above.

The action of the specific Maurer-Cartan element $L$ above defines the differential $d$ on $\fGraphs_n^*$ and by duality $\fGraphs_n$, i.e., $d=L\cdot$. Combinatorially, the differential $d$ on $\fGraphs_n$ is given by edges contraction. This differential descends to the quotients \eqref{equ:graphs quotients}.
Likewise, the action of the dg Lie algebra $\fGC_n$ on $\fGraphs_n$ descends to an action of the dg Lie algebra $\GC_n^2$ on $\Graphs_n^2$, and an action of $\GC_n^+$ and $\GC_n$ on $\Graphs_n$. The latter two actions are compatible with the dg Hopf cooperad structures, in the sense that $\gamma\cdot$ is a cooperadic coderivation and a commutative algebra derivation.
Furthermore, we note that all algebraic structures (differentials, Lie bracket, action and dg Hopf cooperad structure) preserve the number of edges minus the number of internal vertices in graphs.

There is a natural map $\Graphs_n \to \Poiss_n^c$ given by sending any graph with an internal (black) vertex to zero, and otherwise sending the algebra generator $\omega_{ij}$ to the corresponding algebra generator of $\Poiss_n^c$. We will use the following well known result:
\begin{prop}[{\cite[Section 3.3]{K2}},{\cite[Theorem 8.1]{LV}},{\cite[Propositions 3.4,3.8,3.9,Appendix F]{Will}, \cite{CGV}}]
The maps 
\[
\Graphs_n^2 \to\Graphs_n \to \Poiss_n^c
\]
are quasi-ismorphisms of dg Hopf cooperads.
The inclusion
\[
\GC_n^2 \subset \fGCc_n
\]
is a quasi-isomorphism of dg Lie algebras and the inclusions 
\[
\GC_n \subset  \GC_n^+ \subset \GC_n^2
\]
become quasi-isomorphisms upon restriction to the dg Lie subalgebras of (graphs of) loop orders $\geq 2$.
\end{prop}

\subsection{Hopf algebra coactions on \texorpdfstring{$\Graphs_n$}{Graphs n}}
\label{sec:gn action}
The action of $\SO(n)$ on $\lD_n$ can be encoded (as we will see below) by a homotopy action of the abelian Lie algebra $\fg_n=\pi^\K(\SO(n))$ on the dg Hopf cooperad model $\Graphs_n$ for $\lD_n$.
More precisely, let us consider $L_\infty$-morphisms 
\[
   \alpha:  \fg_n \to \GC_n^+.
\]
By definition, $\alpha$ is a morphism of dg cocommutative coalgebras 
\[
\alpha: C^+(\fg_n)=S^+(\fg_n[1]) \to C^+(\GC_n^+)  
\]
from the reduced Chevalley-Eilenberg complex of $\fg_n$ to that of $\GC_n^+$. 
Such a morphism in turn is an equivalent datum to a Maurer-Cartan element 
\[
    \alpha \in (C^+(\fg_n))^*\hotimes \GC_n^+ \cong \bar H(B\SO(n))\hotimes \GC_n^+
\]
in the dg Lie algebra that is the completed tensor product of the dg Lie algebra $\GC_n^+$ with the commutative algebra $\bar H(B\SO(n))$.

Yet alternatively, let 
\[
\hat \fg_n =  (\FreeLie(\bar H_\bullet(B\SO(n))), D)
\]
be the Lie algebraic bar-cobar resolution of $\fg_n$. Then the datum $\alpha$ is equivalent to a dg Lie algebra morphism 
\[
\alpha: \hat \fg_n \to \GC_n^+. 
\]

Finally, denote by $\hat \fg_n^c$ the dg Lie coalgebra (pre-)dual to $\hat \fg_n$, and let $\mU\hat \fg_n^c$ be its universal enveloping coalgebra. The latter object is a commutative dg Hopf algebra.
Then a dg Lie action of $\hat \fg_n$ is the equivalent datum to a coaction of $\mU\hat \fg_n^c$ .
Hence $\alpha$ encodes an object 
\[
 ( \mU\hat \fg_n^c,\Graphs_n) \in \dgHAlgc\dgHOpc
\]
in the category $\dgHAlgc\dgHOpc$ of pairs consisting of a dg commutative Hopf algebra coacting upon a dg Hopf cooperad.

Any dg Hopf cooperad can be considered as a homotopy Hopf cooperad by applying the Hopf cooperad version of the inclusion functor $N$ of \eqref{equ:tau N adj}. Hence we obtain an object 
\[
    ( \mU\hat \fg_n^c,N\Graphs_n) \in \dgHAlgc\dgca^{\cT^{op}}
\]
in the category of pairs consisting of a dg commutative Hopf algebra coacting on an object in $\dgca^{\cT^{op}}$, i.e., on a homotopy Hopf cooperad.

\subsection{Another model (for the homotopy quotient)}
\label{sec:another model}
Let us abbreviate $H:=H(B\SO(n))$.
Define 
\[
\BGraphs_n := \Graphs_n \otimes H
\]
to be the cooperad object in the category $\dgca^{H/}$ of dg commutative algebras under $H$, that is obtained by base change to $H$.
Since $\GC_n^+$ acts on $\Graphs_n$ the dg Lie algebra $\GC_n^+\hotimes H$ acts on $\BGraphs_n$, by just extending the action $H$-linearly.
Now suppose that $\alpha\in \GC_n^+\hotimes \bar H(B\SO(n))$ is a Maurer-Cartan element as in section \ref{sec:gn action} above.
Then we can form the twisted object 
\[
\BGraphs_n^\alpha := (\BGraphs_n, d+\alpha\cdot) .
\]
This is still a cooperad object in $\dgca^{H/}$. We may consider it as a homotopy cooperad 
\[
   N \BGraphs_n^\alpha \in (\dgca^{H/})^{\cT^{op}},
\]

Next consider the Koszul complex (or Weil algebra, see section \ref{sec:group models}) $K=W_{\SO(n)}=(\mU \fg_n^c\otimes H,D)$, 
which is compatibly a $\mU\fg_n^c$ comodule, an $H$-module, and satisfies $H( K)\cong \mathbb Q$.
Tensoring with $K$ over $H$ defines a functor from $H$-modules to $\mU\hat \fg_n^c$-comodules.
By tensoring with $K$ we hence obtain a pair
\[
  (\mU\fg_n^c, K\otimes_H  N \BGraphs_n^\alpha ) 
  \in \dgHAlgc\dgca^{\cT^{op}},
\]
consisting of a commutative Hopf algebra, coacting on an object (a homotopy cooperad) in $\dgca^{\cT^{op}}$.
Below we will also use the version
$\hat K=(\mU\hat \fg_n^c\otimes H,D)$ of the Koszul complex, obtained by replacing the Lie coalgebra $\fg_n^c$ by its bar-cobar resolution. This has analogous properties to $K$, and by tensoring with $\hat K$ we hence obtain a pair
\[
  (\mU\hat \fg_n^c, \hat K\otimes_H  N \BGraphs_n^\alpha ) 
  \in \dgHAlgc\dgca^{\cT^{op}}.
\]

\subsection{Comparison of both models }
Fix again a Maurer-Cartan element $\alpha\in \GC_n^+\hotimes \bar H(B\SO(n))$. Then we will need to compare the two algebraic objects of the previous subsections.

\begin{prop}\label{prop:GN NG}
    Let $\alpha\in \bar H(B\SO(n))\hotimes \GC_n^+$ be a Maurer-Cartan element.
Then there is a zigzag of quasi-isomorphims in $\dgHAlgc\dgca^{\cT^{op}}$
\[
(\mU \fg^c_n, K\otimes_H N \BGraphs_n^\alpha ) 
\to 
(\mU \hat \fg^c_n, \hat K\otimes_H N \BGraphs_n^\alpha ) 
\xleftarrow{f}
(\mU \hat \fg^c_n, N\Graphs_n ) 
\]
connecting the two pairs constructed in sections \ref{sec:gn action} and \ref{sec:another model} above.
\end{prop}
\begin{proof}
The left-hand arrow is induced by the quasi-isomorphism $K\to \hat K$ and is obviously a quasi-isomorphism.
To define the right-hand arrow $f$ note that the $ \hat K\otimes_H N \BGraphs_n^\alpha$ is object-wise a quasi-cofree $\mU \hat \fg^c$-comodule, cogenerated by $N \BGraphs_n^\alpha$. 
Hence $f$ is defined by its composition with the projection to cogenerators 
\[
    \Graphs_n \to \BGraphs_n
\]
and for this we take the obvious inclusion.
Next note that 
\[
\hat K \otimes_H \BGraphs_n
=
(\underbrace{\mU\hat \fg_n^c\otimes H(B\SO(n))}_{\cong \hat K \simeq \mathbb Q}\otimes \Graphs_n, D),
\]
with the first two factors on the right-hand side forming an essentially acyclic complex (the Koszul complex) as indicated. Taking a spectral sequence for the filtration by the degree in $\stG_n$ we may hence conclude that our morphism $f$ is a quasi-isomorphism.

% ---
% The space in the middle right is isomorphic (as a graded vector space) to $\stG_n\otimes \mU \hat{\fg}^c_n \otimes H(G_0)$. Taking a spectral sequence on the degree in $\stG_n$, the differential reduces to the Koszul differential on the ``Koszul complex'' piece $\mU \hat{\alg g} \otimes H(G_0)$. That latter complex has one dimensional cohomology, and the spectral sequence must abut here by degree reasons. Hence the vertical arrow on the right is indeed a quasi-isomorphism.
\end{proof}

\subsection{From Hopf objects to simplicial}
The right-adjoint $\G$ (geometric realization) of the rational homotopy adjunction \eqref{equ:rht adj} is symmetric monoidal.
Hence it extends to a functor 
\[
\G \colon \dgHAlgc\dgHOpc \to \sGrp\sOp  
\]
from the category of pairs consisting of a dg commutative Hopf algebra coacting upon a dg Hopf cooperad to the category of pairs consisting of a simplicial group acting on a a simplicial operad.

In particular, from our pair $( \mU \hat\fg_n^c,\Graphs_n)$ of the previous subsection we obtain a pair of a simplicial group and a simplicial operad with an action of the group
\[
   ( \G \mU \hat\fg_n^c,\G \Graphs_n)= ( \G \mU \hat\fg_n^c,\G \Graphs_n)_\alpha =  
    (\Exp_\bullet(\hat \fg_n),\G \Graphs_n )_\alpha  \in \sGrp\sOp.  
\]
We emphasize that the pair depends on the Maurer-Cartan element $\alpha$, in that $\alpha$ encodes the action of $\Exp_\bullet(\hat \fg_n)$ on $\G \Graphs_n$.
Nevertheless, we will suppress $\alpha$ from the notation if the action is clear from the context.

\section{Derivations of the main results and corollaries from Theorem \ref{thm:main equiv}}
\label{sec:main derivations}

Since the proof of Theorem \ref{thm:main equiv} is fairly long and technical, we shall first show here how to reduce the other main results from the introduction to Theorem \ref{thm:main equiv}.
The proof of Theorem \ref{thm:main equiv} will then follow in the remaining sections \ref{sec:equiv forms} to \ref{sec:auxthmproof}.

\subsection{Proof of Theorem \ref{thm:main pair} from Theorem \ref{thm:main equiv}}
Let $(G, \POp)$ be a simplicial model for $(\SO(n), \lD_n)$, let $\K=\R$, and abbreviate $H=H(\bar W G)=H(B\SO(n))$. 
From Theorem \ref{thm:main equiv} we know that there is a zigzag of weak equivalences in $\dgca_*(\dgca^{/})^{\cT^{op}}$ connecting $(\Omega(\bar WG), \Omega(N\POp\sslash G))$ to $(H, \BGraphs_n^{m_n})$.\footnote{The "$\Omega$" appearing here is the functor of \eqref{equ:rht adj} and compared to Theorem \ref{thm:main equiv} we use that it is weakly equivalent to its PA version, see \cite{HLTV} and also the discussion in Section \ref{sec:equiv forms} below.}
From Lemma \ref{lem:rat simpl} we hence conclude that there is a zigzag of weak equivalences in $\sGrp\sOp$ connecting $(G^\R, \POp^\R)$ and 
$(G', \tau Z)$ for $(G',Z)$ any cofibrant replacement of 
$$
(\ell\G H, \G N \BGraphs_n^{m_n}\times_{\G H} (\G H)_\eta)\in \sGrp\sSet^{\cT}.
$$
In other words 
\begin{equation}\label{equ:51 proof 1}
  (G^\R, \POp^\R) \simeq (\ell\G H, \tau^h \G N \BGraphs_n^{m_n}\times_{\G H} (\G H)_\eta)
\end{equation}
Let us write 
\[
  E:= \Exp_\bullet \fg_n = \G H(G).
\]

\begin{lemma}\label{lem:W K relation}
We have that $(\ell\G H, \G N\BGraphs_n^{m_n}\times_{\G H} (\G H)_\eta)$
is weakly equivalent to $(E, \G (N\BGraphs_n^{m_n}\otimes_H K))$ in $\sGrp\sSet^{\cT}$, with $K$ the Koszul complex of the previous section.
\end{lemma}
\begin{proof}
First we note that $\G K\to \G H$ is a universal $E$-fibration, that is, $\G K$ is contractible Kan complex with a free $E$-action, and hence in particular a cofibrant element of $E\sSet$.
We may hence pick a lift in $E\sSet$
\[
     \begin{tikzcd}
        \emptyset \ar[hookrightarrow]{d} \ar{r} & WE \ar[twoheadrightarrow]{d}{\sim} \\
        \G K \ar{r} \ar[dashed]{ur} 
        & *
     \end{tikzcd}.
\]
The lift $\G K\to WE$ is automatically a weak equivalence since both sides are contractible. Taking $E$-quotients we hence have a weak equivalence 
\[
    \G H \to \bar WE.    
\]
Applying $\ell$ (which preserves all weak equivalences) and using that the adjunction counit is a weak equivalence we obtain the weak equivalences of simplicial groups
\[
    \ell \G H \xrightarrow{\sim}  \ell \bar W E \xrightarrow{\sim} E.
\]
Furthermore, recall that $(\G H)_\eta=\G H \times_{\bar W\ell\G H} W\ell\G H$ and note that $\G K = \G H \times_{\bar W E} WE$.
Hence, via the weak equivalence $\ell \G H\to E$ from above we obtain a weak equivalence $\phi:(\G H)_\eta \to \G K$ of (weakly contractible) simplicial sets over $\G H$ that is compatible with the $\ell \G H$-action on both sides. 
For any object $M\in \sSet_{/\G H}^{\cT}$ the morphism $\phi$ then induces a weak equivalence in $\sGrp\sSet^{\cT}$
\[
(\ell \G H, M \times_{\G H} (\G H)_\eta )
\to 
(E, M \times_{\G H} \G K ).
\]
Here we are interested in $M=\G N\BGraphs_n^{m_n}$.
We then conclude by noting that by Remark \ref{rem:G pullback}
\[
    \G N\BGraphs_n^{m_n} \times_{\G H} \G K
    =
    \G(N\BGraphs_n^{m_n}\otimes_H K).
\]
\end{proof}

Using Lemma \ref{lem:W K relation} we may finish the proof of Theorem \ref{thm:main pair}. Inserting the result of the Lemma into \eqref{equ:51 proof 1} we find
\begin{equation}\label{equ:51 proof 2}
  (G^\R, \POp^\R) \simeq (E, \tau^h\G (N\BGraphs_n^{m_n}\otimes_H K)).
\end{equation}

We next apply $\G$ to the zigzag of Proposition \ref{prop:GN NG}. From the fact that $\G$ preserves weak equivalences between cofibrant dgcas we then conclude that
\[
    (E, \G (N\BGraphs_n^{m_n}\otimes_H K))
    \simeq 
    (\hat E, \G N \Graphs_n)= (\hat E, N \G\Graphs_n),
\] 
where $\hat E =\G\mU\hat\fg_n^c=\Exp_\bullet(\hat\fg_n)$, and the action of $\mU\hat\fg_n^c$ on $\Graphs_n$ is defined via the Maurer-Cartan element $m_n$. Inserting the previous equation into \eqref{equ:51 proof 2} we find that 
\begin{equation*}
  %\label{equ:51 proof 3}
  (G^\R, \POp^\R) \simeq (\hat E, \tau^h N \G\Graphs_n ).
\end{equation*}
Next we use that the derived adjunction counit is a weak equivalence by Proposition \ref{prop:adj we}, so that 
\[
    (G^\R, \POp^\R) 
    %\simeq (E, \tau^h \G (N(\BGraphs_n^{m_n})\otimes_H K))
    %\simeq (\hat E, \tau^h N \G\Graphs_n) 
    \simeq (\hat E, \G\Graphs_n).
\]

We thus obtain a zigzag (see \eqref{equ:rht pair zigzag})
\[
(G,\POp) \xrightarrow{r} (G^\R, \lD_n^R) \xleftarrow{\sim} \cdots \xrightarrow{\sim}  (\hat E, \G\Graphs_n)
\]
connecting a simplicial model $(G,\POp)$ of the pair $(\SO(n), \lD_n)$ to the pair $(\hat E, \G\Graphs_n)$.
The arrows other than $r$ are weak equivalences, so to show Theorem \ref{thm:main pair} it suffices to check that $r$ is a real homotopy equivalence. 
We first note that this is true on the group part 
\[
G \xrightarrow{r} G^\R,
\]
see Proposition \ref{prop:G rht equiv}.
Next we have:
\begin{lemma}
The map $r$ induces a real homotopy equivalence on the $2$-ary parts of the operads 
\[
\POp(2) \xrightarrow{r} \lD_n^\R(2) \simeq \G\Graphs_n(2).
\]
\end{lemma}
\begin{proof}
We have that both sides have the same real homotopy type $(S^{n-1})^\R$.
Let us first consider generally endomorphisms of $(S^{n-1})^\R$.
The Sullivan minimal model of $S^{n-1}$ is 
\[
A_{n-1} := \begin{cases}
\R[\omega] & \text{for $n$ even} \\ 
\R[\omega,\eta] & \text{for $n$ odd, with $d\eta=\omega^2$} 
\end{cases},
\]
with $\omega$ of degree $n-1$ and $\eta$ of degree $2n-3$.
Any dgca endomorphism $\phi$ of $A_{n-1}$ has the form $\phi(\omega)=\lambda\omega$ and (for $n$ odd) $\phi(\eta)=\lambda^2\eta$ for some $\lambda\in \R$.
In either case, we have the following dichotomy:
\begin{itemize}
\item Either $\lambda\neq 0$, then the endomorphism is an isomorphism on real cohomology and on real homotopy groups.
\item Or $\lambda=0$, then the morphism is zero on all positive degree real cohomology and real homotopy groups. 
\end{itemize}
Now we return to our morphism $r$. Since $r$ preserves the $\SO(n)$-actions we have the following commutative diagram on real homotopy groups
\[
    \begin{tikzcd}
        \pi_k^\R(\SO(n)) \ar{d} \ar{r}{\simeq} & \pi_k^\R(\SO(n)) \ar{d} \\
        \pi_k^\R(S^{n-1}) \ar{r}{r} & \pi_k^\R(S^{n-1})
    \end{tikzcd}.
\]
The upper horizontal arrow is an isomorphism since we already know that $r$ is a real homotopy equivalence on the group part of our pair.
The vertical arrows are induced by the action of $\SO(n)$ on a point in $S^{n-1}$. The vertical arrows are both non-trivial, in that for $n$ even the Euler class, and for $n$ odd the top Pontryagin class is sent to a non-trivial real homotopy class of $S^{n-1}$.
More precisely, this is well known for the left-hand vertical arrow, coming from the topological action of $\SO(n)$ on $S^{n-1}$. For the right-hand vertical arrow this follows from inspection of our model $(\hat E, \Graphs_n(2))$ for $(\SO(n), S^{n-1})$.
We conclude that in our dichotomy for endomorphisms of the real homotopy type of $S^{n-1}$ the map $r$ cannot be of the second type, and hence must be a real homotopy equivalence.
\end{proof}

Finally, note that the cooperad $H(\lD_n)$ is cogenerated by its binary part $H(\lD_n(2))$.
Hence from the fact that $r$ is an operad morphism and that the binary part of $r$ is a cohomology isomorphism by the previous Lemma, we conclude that $H(r)$ is injective.
But since $H(\Graphs_n)=H(\lD_n)$ and by finite dimensionality in each arity, we conclude that $r$ induces an isomorphism $H(\lD_n)\to H(\lD_n^\R) =H(\Graphs_n)=H(\lD_n)$ on cohomology, and is hence a real homotopy equivalence.

\hfill\qed

% \section{Models and (co)formality for the framed \texorpdfstring{$n$}{n}-disks operads}\label{sec:framed}

\subsection{Models for the framed little \texorpdfstring{$n$}{n}-disks operads, formality and proof of Corollaries \ref{cor:flD model} and \ref{cor:partial framed formality}}\label{sec:models formality}
First note that the framed-operad construction of section \ref{sec:framed operads} defines a functor 
\begin{gather*}
\rtimes \colon \sGrp\sOp \to \sOp \\
(G, \POp) \mapsto \POp \rtimes G.
\end{gather*}
This functor preserves weak equivalences and rational weak equivalences.
Hence from the zigzag of Theorem \ref{thm:main pair} we obtain a zigzag of rational weak equivalences connecting (a simplicial model of) $\flD_n$ to $\G(\Graphs_n)\rtimes \G(\mU \hat \fg_n^c)$, see also section \ref{sec:gn action}. Since $\G$ sends coproducts to products by adjunction we have $\G(\Graphs_n)\rtimes \G(\mU \hat \fg_n^c)=\G(\Graphs_n\rtimes \mU \hat \fg_n^c)$.
This shows Corollary \ref{cor:flD model}.
By Corollary \ref{cor:omega sharp comparison} this also implies that
\[
\Omega_\sharp(\flD_n) \simeq \Graphs_n\rtimes \mU \hat \fg_n^c,  
\]
so that $\Graphs_n\rtimes \mU \hat \fg_n^c$ is a real dg Hopf cooperad model for $\flD_n$ within Fresse's real homotopy theory of operads \cite{Frextended}. 

Next restrict to $n$ even. In this case the explicit form of the MC element \eqref{equ:mn intro} shows that the coaction of $\hat \fg_n^c$ actually factors through $\fg_n^c$. Hence we may simplify our model to $\Graphs_n\rtimes \mU \fg_n^c$. Furthermore, the action of the tadpole graph $\tadpole$ is intertwined under the formality quasi-isomorphism 
\[
\Graphs_n \xrightarrow{\sim} \Poiss_n^c
\]
with an operator $T$ on $\Poiss_n^c$, that will be further discussed in section \ref{sec:FM equiv cohom} below. We hence obtain a quasi-isomorphism 
\[
\Graphs_n\rtimes \mU \hat \fg_n^c
\to 
\Poiss_n^c \rtimes \mU \fg_n^c \cong H(\flD_n).
\]

This shows that for $n\geq 4$ even $\flD_n$ is formal in either of the following equivalent senses, which are the precise meaning of Corollary \ref{cor:partial framed formality}:
\begin{itemize}
\item The simplicial operad $\sS \flD_n$ may be connected by a zigzag of real weak homotopy equivalences of simplicial operads to the simplicial operad $\G(\COp)$ for $\COp$ a cofibrant replacement of the dg Hopf cooperad $H(\flD_n)$. (For example we may take $\COp=  \Graphs_n\rtimes \mU \fg_n^c$.)
\item The dg Hopf cooperad $\Omega_\sharp(\flD_2)$ may be connected by a zigzag of quasi-isomorphisms of dg Hopf cooperads to the dg Hopf cooperad $H(\flD_n)$.
\end{itemize}

\begin{rem}\label{rem:tilde g def}
In the case of odd $n$ the action of $\hat \fg_n$ on $\Graphs_n$ encoded by the Maurer-Cartan element $m_n$ in fact factorizes through a smaller dg Lie algebra $\tilde \fg_n$ that fits into a diagram 
\[
 \hat\fg_n\xrightarrow{\sim}  \tilde \fg_n \xrightarrow{\sim}  \fg_n.
\]
Concretely, recall that $\hat\fg_n$ is generated by $\bar H(B\SO(n))$, i.e., by monomials in the Pontryagin classes. Then $\tilde \fg_n$ is the quotient of $\hat\fg_n$ obtained by setting to zero all non-linear such monomials, except for the powers of the top Pontryagin class $\Pp_{2n-2}^k$.
Then a smaller dg Hopf cooperad model for $\flD_n$ ($n$ odd) is 
\[
\Graphs_n \rtimes \mU \tilde \fg_n^c,
\]
with $\tilde \fg_n^c$ the dg Lie coalgebra (pre-)dual to $\tilde \fg_n$.
\end{rem}

\subsection{Recollection: The Drinfeld-Kohno (Quillen) models for \texorpdfstring{$D_n$}{Dn}}\label{sec:drinfeld-kohno}
The higher Drinfeld-Kohno Lie algebras are Lie algebras $\DK_n(r)$ generated by symbols $t_{ij}=(-1)^n t_{ji}$ of degree $n-2$, with $1\leq i\neq j \leq n$, with relations $[t_{ij}, t_{kl}]=0$ for $\#\{i,j,k,l\}=4$ and $[t_{ij},t_{ik}+t_{jk}]=0$ for $\#\{i,j,k\}=3$.
These Lie algebras assemble (for varying $r$) into an operad in Lie algebras which we call the $n$-Drinfeld Kohno operad.
Concretely, the symmetric group action is by the obvious permutation of indices. The operadic compositions
\[
\circ_r: \DK_n(r) \oplus \DK_n(r') \to \DK_n(r+r'-1)
\]
are defined by the following rules.
\begin{align*}
 \DK_n(r) \ni t_{ij} &\mapsto t_{ij}  \in\DK_n(r+r'-1)& \text{for $i,j<r$} \\
 \DK_n(r) \ni t_{ir} &\mapsto t_{ir}+t_{i(r+1)}+\dots + t_{i(r+r'-1)}  \in\DK_n(r+r'-1) & \text{for $i<r$} \\
 \DK_n(r') \ni t_{ij} &\mapsto t_{(i+r)(j+r)}  \in\DK_n(r+r'-1) & \\ 
\end{align*}
All other operadic compositions are determined by the operad axioms and the symmetric group action.
The following result is well known.
\begin{thm}\label{thm:drinfeld-kohno}
The Drinfeld-Kohno operad $\DK_n$ forms a rational Quillen model for the operad $\lD_n$ for each $n\geq 2$.
\end{thm}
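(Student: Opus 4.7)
The plan is to deduce the theorem from two well-established inputs: the rational formality of $\lD_n$ for every $n \geq 2$ (Tamarkin for $n=2$, Fresse--Willwacher for $n \geq 3$), together with the operadic Koszulness of $\e_n = H_\bullet(\lD_n) = \Poiss_n$, of which $\DK_n$ is, up to a suitable suspension, the Koszul dual operad in Lie algebras.

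First I would identify the arity-wise Chevalley--Eilenberg cochain complex $C^*(\DK_n(r))$: it is the free graded commutative algebra on generators $\omega_{ij}$ of degree $n-1$ (dual to the $t_{ij}$), equipped with a differential whose image on generators is dual to the infinitesimal braid relations. The operad structure on $\DK_n$ upgrades the collection $C^*(\DK_n)$ to a dg Hopf cooperad. I would then observe that the operadic analogue of Arnold's computation — equivalently, the Koszulness of $\e_n$ — furnishes a quasi-isomorphism of dg Hopf cooperads $C^*(\DK_n) \xrightarrow{\sim} \e_n^*$, recognising $C^*(\DK_n)$ as the Koszul resolution of $\e_n^*$.

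Second, by the rational formality of $\lD_n$ the cohomology Hopf cooperad $\e_n^*$ is a rational dg Hopf cooperad model for $\lD_n$; composing with the resolution of the previous step, so is $C^*(\DK_n)$. By the operadic Quillen correspondence — the operadic lift of the classical fact that a dgca model of the form $C^*(L)$ identifies $L$ as a Quillen Lie model — this exhibits the graded Lie algebra operad $\DK_n$ as a rational Quillen model of $\lD_n$. Note that $\DK_n$ is concentrated in a single internal degree and carries no differential, so it is automatically its own minimal Quillen model.

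The main obstacle is ensuring that the chain of quasi-isomorphisms and the final Quillen-model identification respect the full Hopf cooperad (resp.\ operad-in-Lie) structure, not merely the arity-wise dgca (resp.\ Lie) structure. This is precisely the content of operadic Koszul duality as developed by Getzler--Jones, Ginzburg--Kapranov and Fresse, and once their results are in hand the proof reduces to assembling the above two ingredients.
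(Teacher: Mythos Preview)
Your proposal is correct and relies on the same essential input (formality of $\lD_n$), but the route differs from the paper's. The paper does not invoke operadic Koszul duality in the abstract; instead it uses the concrete graphical model $\stG_n$ (equivalently $\Graphs_n$) furnished by Kontsevich's formality map, observes that $\Graphs_n = C(\ICG_n)$ is already the Chevalley complex of an explicit operad $\ICG_n$ in $L_\infty$ algebras (the internally connected graphs), and then proves that $\ICG_n$ is formal, i.e.\ quasi-isomorphic to its homology $\DK_n$. The formality of $\ICG_n$ is established via a truncation trick: one introduces an auxiliary integer grading on $\ICG_n$ (namely $2\#\text{vertices} - \#\text{edges} + 1$) compatible with all structures, checks that the cohomology sits in auxiliary degree $0$, and takes the sub-$L_\infty$-operad $\TCG_n\subset\ICG_n$ of elements of non-positive auxiliary degree that are closed in degree $0$; this yields the explicit zigzag $\ICG_n\leftarrow\TCG_n\to\DK_n$.

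What each approach buys: your Koszul-duality argument is cleaner and more conceptual, and directly gives a rational statement. The paper's graphical/truncation argument is more hands-on but has a payoff within the paper itself: the same truncation scheme, with a suitably modified auxiliary grading, is reused verbatim in the proof of coformality of the \emph{framed} little disks operads (Theorem~\ref{thm:FE3coformal}), where no off-the-shelf Koszulness statement is available. So the paper's route is chosen to set up that later argument.
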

Let us give a sketch of a proof that these objects are indeed real Quillen models. The proof necessarily uses some form of the (real) formality of $\lD_n$
First, we have seen that the Hopf operad $\Graphs_n$ is a (dgca) model for $\lD_n$. Furthermore, $\Graphs_n=C(\ICG_n^c)$ is the Chevalley complex of the cooperad in $L_\infty$-coalgebras $\ICG_n^c$ given by the internally connected graphs. We are done if we can show that the operads in $L_\infty$-algebras $\ICG_n:=(\ICG_n^c)^*$ and $\DK_n$ are quasi-isomorphic. To this end, one can define an auxiliary grading on $\ICG_n$ for which a graph $\Gamma$ has auxiliary degree 
\[
2 \# (\text{internal vertices of }\Gamma) -  \# (\text{edges of }\Gamma) +1.
\]
The definition is chosen such that:
\begin{itemize}
  \item The grading is compatible with the $L_\infty$-algebra structure, in the sense that the $k$-ary $L_\infty$-operation on $\ICG_n$ has auxiliary degree $2-k$. In particular, the differential has auxiliary degree $+1$.
  \item The grading is compatible with the operad structure, in the sense that the $k$-ary parts of the $L_\infty$-morphisms describing the operadic composition have auxiliary degree $1-k$.
  \item The cohomology is concentrated in auxiliary degree $0$. This is because the generators $t_{ij}$ live in degree 0, the corresponding graphs having one edge and no (internal) vertices, and the auxiliary grading is compatible with the Lie bracket.
  \end{itemize}

Given these conditions we may define the truncated sub-operad in $L_\infty$ algebras
\[
\TCG_n \subset \ICG_n
\]
formed by all elements of auxiliary degree $<0$, the closed elements in degree $0$, and no elements in auxiliary degree $>0$.
We then have the zig-zag of quasi-isomorphisms
\[
\ICG_n\leftarrow \TCG_n \to H(\ICG_n) {=}\DK_n
\]
showing that $\ICG_n$ is formal, and hence that $\lD_n$ is coformal and that $\DK_n$ is a Quillen model for $\lD_n$.

\subsection{Framed Drinfeld-Kohno models}\label{sec:quillen}
In this section we discuss framed analogs $\DKF_n$ of the higher Drinfeld-Kohno operads of Lie algebras. 
Concretely, let as before $\alg g_n=\pi_{>0}(\SO(n))\otimes \R$ as abelian Lie algebra. Concretely, $\alg g_n$ is spanned as a vector space by the Pontryagin classes $p_{4s-1}$ in cohomological degree $1-4s$, and, if $n$ is even, the Euler class $e$ in cohomological degree $1-n$. We now define, as a Lie algebra
\[
\DKF_n(r) := \DK_n(r) \oplus \underbrace{\alg g_n[-1]\oplus \cdots \oplus \alg g_n[-1]}_{r\times}.
\]
In words, $\DKF_n(r)$ is generated by the $t_{ij}$ with relations as in the previous section, and by additional commuting generators $p_{4s-1}^{j}$ and $e^j$ for $1\leq j\leq r$. (The superscript is an index, not an exponent.) There is an obvious action of the symmetric group by permuting indices.
We define the operad structure by extending that on $\DK_n$, such that the composition
\[
\circ_r: \DKF_n(r) \oplus \DKF_n(r') \to \DKF_n(r+r'-1)
\]
acts on the additional generators as follows.
\begin{align*}
 \DKF_n(r) \ni e^{j} &\mapsto e^j  & \text{for $n$ even, $j<r$} \\
 \DKF_n(r) \ni e^{r} &\mapsto \sum_{j=r}^{r'} e^{j} + \sum_{r\leq i<j \leq r'} t_{ij}   & \text{for $n$ even} \\
 \DKF_n(r') \ni e^{i} &\mapsto e^{i+r-1}  & \text{for $n$ even} \\
 \DKF_n(r) \ni p_{2n-3}^{r}&\mapsto \sum_{j=r}^{r'} p_{2n-3}^{j} + \sum_{r\leq i<j \leq r'}[t_{ij},t_{ij}] +  2\sum_{r\leq i<j<k \leq r'}[t_{ij},t_{jk}] & \text{for $n$ odd} \\
 \DKF_n(r) \ni p_{4s-1}^{j}&\mapsto p_{4s-1}^{j}  & \text{for $4s-1<2n-3$} \\
 \DKF_n(r') \ni p_{4s-1}^{j}&\mapsto p_{4s-1}^{j+r-1}  & \text{for $4s-1\leq 2n-3$}
\end{align*}
Again, the other operadic compositions are determined by the symmetric group action. 
In words, the operad structure is (in a suitable sense) trivial for all basis elements of $\alg g_n$ except for the Euler class (for $n$ even), and the top Pontryagin class (for $n$ odd), for which we modified the composition rules.
We will show below that $\DKF_n$ is a Quillen model for $\flD_n$. For now, we show the following:

\begin{thm}\label{thm:homFICG}
The framed Drinfeld-Kohno operad $\DKF_n$ is the homology of a real Quillen model for $\flD_n$ for each $n\geq 2$.
\end{thm}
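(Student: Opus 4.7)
The approach mirrors section \ref{sec:drinfeld-kohno} in the framed setting: I will construct an operad $\FICG_n$ of $L_\infty$-algebras whose Chevalley complex is a dgca model for $\flD_n$, then compute its homology to obtain $\DKF_n$.

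Starting from the dgca model $M_n := \stG_n \circ \mU\hat{\alg g}^*$ of section \ref{sec:models formality}, observe that $\stG_n = C(\ICG_n)$ is itself a Chevalley complex and that $\mU\hat{\alg g}^*$ is, up to the Koszul-dual shift, the Chevalley complex $C(\hat{\alg g})$ of the resolved abelian Lie coalgebra $\hat{\alg g}$. Symmetric monoidality of $C$ then yields
\[
M_n(r)\;\cong\;C\bigl(\FICG_n(r)\bigr),\qquad \FICG_n(r)\;:=\;\ICG_n(r)\ltimes\hat{\alg g}^{\oplus r},
\]
where the semidirect $L_\infty$-structure combines the given structures on each summand with an $L_\infty$-action of each $\hat{\alg g}$-copy on $\ICG_n(r)$ read off from the MC element $m$: this element is, by construction, an $L_\infty$-map $\alg g \to \GC_n$, and $\GC_n$ acts on $\ICG_n$ by operadic derivations. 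Naturality in $r$ promotes $\FICG_n$ to an operad of $L_\infty$-algebras, which is therefore a real Quillen model for $\flD_n$.

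To compute $H(\FICG_n)$ I filter $\FICG_n(r)$ descendingly by the number of generators from $\hat{\alg g}^{\oplus r}$. The $E_1$-page of the spectral sequence is $\DK_n(r)\oplus\alg g^{\oplus r}$, with operad and Lie structures inherited from $\FICG_n$. For $n$ even, Theorem \ref{conjthm:main} gives $m = E\tadpole$, already linear, so the sequence collapses and the resulting interaction reads off directly as $E^r \mapsto \sum_j E^j + \sum_{i<j} t_{ij}$ under operadic composition, matching $\DKF_n$. For $n$ odd, Theorem \ref{conjthm:main} reduces $m$ to the explicit series \eqref{equ:m odd}; the three-edge leading term produces the quadratic correction $\sum[t_{ij},t_{ij}] + 2\sum[t_{ij},t_{jk}]$ in the composition of $p_{2n-2}^r$, while the $(2j+1)$-edge terms for $j \geq 2$ record only iterations of this correction consistent with the composition of the already-identified $p_{2n-2}^j$. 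The remaining Pontryagin classes $p_{4s}^j$ do not appear in $m$ and therefore compose trivially, exactly as in the definition of $\DKF_n$.

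The principal difficulty is the combinatorial verification for $n$ odd: establishing that the higher-valence terms in $m$ contribute no independent new relations in cohomology beyond those forced by the three-edge term and operadic associativity. I expect this to be handled by adapting the auxiliary-grading method of section \ref{sec:drinfeld-kohno}, producing a truncated suboperad $\TCG_n^{\mathrm{fr}}\subset\FICG_n$ in nonpositive auxiliary degree that maps quasi-isomorphically to $\DKF_n$; such a truncation would simultaneously furnish the homology identification and, via the zigzag $\FICG_n \leftarrow \TCG_n^{\mathrm{fr}} \to \DKF_n$, yield the coformality statement of Theorem \ref{thm:FE3coformal}.
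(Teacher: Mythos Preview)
Your overall strategy coincides with the paper's: you identify the Quillen model $\FICG_n = \ICG_n \circ \hat{\alg g}$ via the Chevalley-complex identification of the dgca model $\stG_n \circ \mU\hat{\alg g}^*$, then compute its homology. The vector-space identification $H(\FICG_n(r)) \cong \DK_n(r) \oplus \alg g^{\oplus r}$ is immediate, since (as the paper observes) the differential on $\FICG_n$ does not depend on $m$; your spectral-sequence detour is not needed for this step.

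The gap is in your treatment of the \emph{operadic} structure on cohomology for $n$ odd. Your assertion that the $(2j+1)$-edge terms for $j\geq 2$ ``record only iterations'' consistent with operadic associativity is not justified, and this is precisely the content that needs proof. The paper settles it by a direct structural observation: the cohomology of $\ICG_n$ is represented by internally trivalent trees, and a graph $\gamma\in\GC_n$ can contribute to the induced composition on cohomology only if deleting one vertex of $\gamma$ (with the incident edges becoming legs) yields a trivalent forest without loops. Among the two-vertex graphs appearing in the explicit $m$ of Theorem~\ref{conjthm:main}, only the theta graph ($j=1$) meets this criterion --- for $j\geq 2$ one is left with a single $(2j{+}1)$-valent vertex. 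Hence only the theta term affects the composition on $H(\FICG_n)$, producing exactly the formula that defines $\DKF_n$. Proposition~\ref{prop:veryloopy} is invoked to ensure that any further terms one might add to $m$ within its gauge class can be taken ``very loopy'' and therefore fail the same criterion.

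Your proposed fallback --- an auxiliary grading and truncation --- is how the paper proves the \emph{subsequent} coformality Theorem~\ref{thm:FE3coformal}, after the present theorem is in hand. One could attempt to merge the two arguments, but you would still need to construct the grading (the paper places the generator $P^r$ in auxiliary degree $r-1$) and verify its compatibility with the $m$-twisted operad structure; that verification again rests on the explicit shape of $m$, so it does not sidestep the issue above.
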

Note that the theorem shows that $\DKF_n$ is a Quillen model for $\flD_n$ provided we can show coformality.
\begin{proof}
By section \ref{sec:models formality} we already have a dg Hopf cooperad model 
\[
\stG_n \rtimes \mU\hat{\alg g}_n^c
\]
for $\flD_n$, depending on the graphical Maurer-Cartan element $m_n$ of \eqref{equ:mn intro}. 
Nevertheless, the above model is quasi-free as a (collection of) dgcas. The dual space of the generators is
\[
\FICG_n(r):= \ICG_n(r) \oplus\underbrace{ \hat{\alg g}_n[-1]\oplus\cdots\oplus \hat{\alg g}_n[-1]}_{r\times}
\]
where $\hat{\alg g}_n$ is (up to a degree shift) isomorphic to a (quasi-)free Lie algebra with generators indexed by elements of $\bar H_\bullet(B\SO(n))$.
The differential on $\FICG_n$ is not affected by the twist with our MC element $m_n$, and hence it is immediate that, as a collection of graded vector spaces
\begin{equation}\label{equ:ICGFDKF}
H(\ICGF_n) \cong \DKF_n.
\end{equation}
However, note that the operadic compositions ($L_\infty$-morphisms) in $\FICG_n$ are relatively complicated and do depend on $m_n$, and it is a priori not obvious that the isomorphism \eqref{equ:ICGFDKF} is compatible with the operadic compositions.
To show this statement let us note three facts: (i) The only contributions to the homology of $\ICG_n$ can come from graphs that are trivalent trees after deleting the external vertices. This is true because it is true for the generators $t_{ij}$, and the property is preserved by the Lie bracket. (ii) Looking at the way the action of $\GC_n^+$ is defined, the only graphs in $\GC_n^+$ that can possibly create such objects are graphs which after deletion of one vertex become trivalent trees.
(iii) For $n$ even the only graph appearing in $m_n$ is the tadpole $\tadpole$, which satisfies this condition. For $n$ odd, only the theta graph $\thetagr$ satisfies the condition and is present in $m_n$. 

Now, we know the coefficients of the tadpole and theta graphs in $m_n$, and hence we can understand the operad structure on $H(\ICGF_n)$. The operad structure on $\DKF_n$ is defined precisely such that \eqref{equ:ICGFDKF} is compatible with the compositions.

%TODO: discuss bivalent/trivalent graph issue somewhere
\end{proof}

\subsection{ Coformality of \texorpdfstring{$\flD_n$}{fDn} and proof of Corollary \ref{cor:FE3coformal}}
Now we are ready to prove the coformality of $\flD_n$, and hence check that $\DKF_n$ is indeed a Quillen model.

After the discussion in the previous subsection, it suffices to construct a quasi-isomorphism between the operad in $L_\infty$-algebras $\ICGF_n$ and its homology $H(\ICGF_n)=\DKF_n$. 
As a first step we may pass to the quasi-isomorphic quotient $\wICGF_n$ defined such that 
\begin{equation}\label{equ:wICGF}
  \wICGF_n(r) = \ICG_n(r)\oplus \tilde{\alg g}_n[-1]\oplus\cdots \oplus\tilde{\alg g}_n[-1]
\end{equation}
with $\tilde\fg_n=\fg_n$ for even $n$ and $\tilde\fg_n$ as defined in Remark \ref{rem:tilde g def} for odd $n$.
Since the action of $\hat{\alg g}_n$ on $\Graphs_n$ factors through the quasi-isomorphic quotient $\tilde{\alg g}_n$ we clearly have a quasi-isomorphism 
\[
  \wICGF_n \to \ICGF_n. 
\]

Now we desire to copy the truncation trick in the proof of the non-framed analogous result Theorem \ref{thm:drinfeld-kohno} above. Looking at that proof, it is clear that we are done if we can define an auxiliary grading on $\wICGF_n$ with the same formal properties, i.e.: 

\begin{itemize}
\item The grading is compatible with the $L_\infty$-algebra structure, in the sense that the $k$-ary $L_\infty$-operation on $\wICGF_n$ has auxiliary degree $2-k$. In particular, the differential has auxiliary degree $+1$.
\item The grading is compatible with the operad structure, in the sense that the $k$-ary parts of the $L_\infty$-morphisms describing the operadic composition have auxiliary degree $1-k$.
\item The cohomology is concentrated in auxiliary degree $0$.
\end{itemize}
Let us for the moment assume that such an auxiliary grading exists. Then we can just proceed as in the non-framed case, cf. section \ref{sec:drinfeld-kohno}. We construct the desired quasi-isomorphism connecting $\wICGF_n$ to its cohomology $\DKF_n$ as
\[
\wICGF_n\leftarrow \TCGF_n \to H(\wICGF_n) \stackrel{\text{Thm. \ref{thm:homFICG}}}{=}\DKF_n,
\]
where $\TCGF_n\subset \wICGF_n$ is the sub-operad in $L_\infty$-algebras obtained by auxiliary-degree truncation, containing all elements of negative auxiliary degree and the closed elements of auxiliary degree zero.
Hence Corollary \ref{cor:FE3coformal} would be shown.

So we are left with verifying that an auxiliary grading satisfying the above three properties exists.
We define our auxiliary grading on the summand $\ICG_n$ of \eqref{equ:wICGF} as in the proof of Theorem \ref{thm:drinfeld-kohno} above as (for $\Gamma\in \ICG_n$ some graph)
\[
2 \# (\text{internal vertices of }\Gamma) -  \# (\text{edges of }\Gamma) +1.
\]
It follows as above that all the generators $t_{ij}$ of the part $\DK_n$ of the cohomology, and hence the whole $\DK_n$, are concentrated in auxiliary degree 0.
Furthermore, the auxiliary grading on the pieces $\tilde{\alg g}_n[-1]$ is defined as follows. 
In the case of even $n$ we declare all of $\tilde{\alg g}_n=\alg g_n$ to be in auxiliary degree $0$. 
For odd $n$, $\tilde{\alg g}_n$ is generated by symbols $p_{4j-1}$ of cohomological degree $1-4j$ for $j=1,\dots,(n-3)/2$ and symbols $a_k$ ($k=1,2,\dots$) of degree $1-k(2n-2)$ that are dual to the powers of the top Pontryagin class $\Pp_{2n-2}^{k}$.
The generators $p_{4j-1}$ are central, while the $a_k$ generate a free Lie subalgebra of $\tilde{\alg g}_n$. We have $dp_{4j-1}=0$ and 
\begin{equation}\label{equ:dak}
d a_k=\sum_{i,j\geq 1\atop i+j=k} [a_i,a_{j}].
\end{equation}
We now define the auxiliary grading by declaring the abelian piece (i.e., all the $p_{4j-1}$) to live in auxiliary degree zero, by declaring $a_k$ to live in auxiliary degree $2-2k$, and by declaring the Lie bracket in $\tilde \fg_n$ to be of auxiliary degree $-1$. Note that the latter assertion is not in contradiction with the previous assertion that the Lie bracket on $\wICGF_n(r)$ is to have degree zero. This is because the Lie bracket on $\wICGF_n(r)$ does not involve the Lie bracket in $\tilde \fg_n$, but rather the Lie bracket on the part $\tilde{\fg}_n[-1]$ is zero. Furthermore, all $L_\infty$-algebra operations of $\wICGF_n(r)$ that involve at least one argument in $\tilde{\fg}_n[-1]$ are zero, and hence the first of the three assertions above holds. The third also holds since we defined the auiliary grading on $\tilde{\fg}_n[-1]$ such that the cohomology $\fg_n[-1]$ is concentrated in auxiliary degree 0.
Next, we turn to the second assertion above, i.e., the compatibility of the auxiliary grading with the operadic structure, which is the most difficult verification.
Consider a generic operadic composition (an $L_\infty$-morphism) 
\[
\theta: \wICGF_n(r) \oplus \wICGF_n(s_1) \oplus \cdots \oplus \wICGF_n(s_r)
\to \wICGF_n(s_1+\cdots +s_r).
\]
A generic non-vanishing $N$-ary morphism of $\theta$ has the form 
\[
\theta_N(\gamma_1^0,\dots,\gamma_k^0, x_1^0,\dots,x_l^0,
\gamma_1^1,\dots,\gamma_{k_1}^1, \dots , \gamma_1^r,\dots,\gamma_{k_r}^r),
\]
where the superscript indicates in which of the $k+1$ summands the argument lives, e.g., $\gamma_1^0\in \wICGF_n(r)$, $\gamma_1^1\in \wICGF_n(s_1)$ etc. Symbols ``$\gamma$'' live in the graphical part $\ICG_n(-)$ of the respective summand, while the ``$x$'' live in the part $\tilde \fg_n$. The value of $\theta_N$ is computed (roughly) by the following combinatorial procedure:
\begin{enumerate}
    \item Make each group $\gamma_1^j,\dots,\gamma_{k_j}^j$ into one graph $\Gamma_j\in \Graphs_n^*(s_j)$ by gluing at the external vertices.
    \item Replacing the $x_j^0$ by their corresponding graphs in $\GC_n^+$ according to the explicit form of $m_n$. Then the $x_j^0$ that are in the $i$-th summand $\tilde \fg_n[-1]$ of \eqref{equ:wICGF} act on the graph $\Gamma_i$ according to the action of $\GC_n^+$ on $\Graphs_n^*$. We call the resulting elements $\Gamma_1',\dots, \Gamma_r'$.
    \item Next we compute the usual operadic composition in the operad $\Graphs_n^*$ of the element $\Gamma_0\in \Graphs_n(r)$ with the elements $\Gamma_j'\in \Graphs_n(s_j)$ to obtain an element $\Gamma_0(\Gamma_1',\dots, \Gamma_r')\in \Graphs^*(s_1+\dots+s_r)$.
    \item Finally we project $\Gamma_0(\Gamma_1',\dots, \Gamma_r')\in \Graphs^*(s_1+\dots+s_r)$ to the internally connected part to obtain the desired element $\theta_N(\gamma_1^0,\dots,\gamma_{k_0}^0, x_1^0,\dots,x_l^0,
\gamma_1^1,\dots,\gamma_{k_1}^1, \dots , \gamma_1^r,\dots,\gamma_{k_r}^r)\in \wICGF(s_1+\cdots+s_r)$. 
\end{enumerate}
Let us trace the auxiliary degrees through this procedure. Temporarily denote the auxiliary degree of some object $X$ by $|X|$.
Also extend the definition of the auxiliary degree to $\Graphs_n^*$ by declaring a graph $\Gamma\in \Graphs_n^*$ to be of auxiliary degree
\[
|\Gamma|=2 \# (\text{vertices of }\Gamma) -  \# (\text{edges of }\Gamma).
\]
Then we have that
\[
|\Gamma_j| = |\gamma_1^j| +\cdots +|\gamma_{k_j}^j| - k_j,
\]
since the gluing at external vertices preserves all internal vertices and edges.
Also temporarily extend the definition of the auxiliary degree to $\GC_n^+$, declaring a graph $\Gamma\in \GC_n^+$ to be of auxiliary degree
\[
2 \# (\text{vertices of }\Gamma) -  \# (\text{edges of }\Gamma) -2.
\]
Then easy inspection of the MC element $m_n$ shows that an $x_j^0$ of auxiliary degree $|x_j^0|$ is sent to an element of auxiliary degree $|x_j^0|-1$ of $\GC_n^+$. To finish the analysis of step (2) above note that the auxilary degrees on $\Graphs_n^*$ and $\GC_n^+$ are defined so that the action of the latter on the former preserves the auxiliary degree. We hence find:
\[
|\Gamma_1'|+\cdots +|\Gamma_r'| =
\sum_{j=1}^l |x_j|^0 + \sum_{j=1}^r\sum_{i=1}^{k_j} |\Gamma_i^j| 
-
l - \sum_{j=1}^r k_j.
\]
The operadic composition in step (3) is additively compatible with the auxiliary gradings, so that 
\[
|\Gamma_0(\Gamma_1',\dots, \Gamma_r')|
=
\sum_{j=1}^l |x_j^0| + \sum_{j=0}^r\sum_{i=1}^{k_j} |\Gamma_i^j| 
-
\underbrace{(l + \sum_{j=0}^r k_j)}_{=N}.
\]
Finally, the projection to $\wICGF$ raises the auxiliary degree by one again, due to the extra ``$+1$'' in the definition of the auxiliary degree on $\wICGF$. We conclude that the $N$-ary part of the composition morphisms has auxiliary degree $1-N$ as desired.

%One can check that in each case this grading satisfies the Lie bracket in $\tilde \fg_n$ requirements, using the explicit form of $m_n$. 
% First, it is clear that the cohomology is concentrated in auxiliary degree 0, by the explicit description of the cohomology in the previous subsection.
% More concretely, the cohomology of $\ICG_n$ is represented by internally trivalent trees and is hence of auxiliary degree 0, and the cohomology of $\tilde \fg_n$ is $\fg_n$, and also concentrated in auxiliary degree 0.
% Furthermore:
% \begin{itemize}
%   \item 
%   The grading is compatible with the $L_\infty$-algebra structure:
%   The $L_\infty$-algebra structure is determined by the differential on $\Graphs_n\rtimes \mU \hat\fg_n^c$, which does not mix the $\mU \hat\fg_n^c$-factors and the $\Graphs_n$-factor. 
%   Also note that the $L_\infty$-algebra structure on $\tilde\fg_n[-1]$ is abelian. 
%   So the only new fact to check here (relative to the non-framed situation of section \ref{}) is that the differential on $\tilde\fg_n[-1]$ has auxiliary degree +1, and this is clear from the definition of the auxiliary degree and formula \eqref{equ:dak}.
  
%   \item The grading is compatible with the operad structure:
%   The operadic composition $L_\infty$-morphisms are built from the operadic composition on $\Graphs_n^*$, the action of $\tilde\fg_n$ on $\Graphs_n^*$, and the Lie bracket on $\tilde\fg_n$, with only the latter t.
  
%   , in the sense that the $k$-ary parts of the $L_\infty$-morphisms describing the operadic composition have auxiliary degree $1-k$.
%   \end{itemize}

\hfill \qed

\subsection{Non-formality for odd \texorpdfstring{$n$}{n}: proof of Corollary \texorpdfstring{\ref{cor:odd nonformality}}{} }

Let $n\geq 3$ be an odd integer, and consider the $\SO(n)$-framed little $n$-disks operad.
We obtain a model for its chains by dualizing our model for differential forms of section \ref{sec:models formality}.
Concretely, the operad of chains is quasi-isomorphic (as a homotopy operad) to 
\[
 \op P := \Graphs_n^* \rtimes \mU \tilde \fg_n,
\]
where $\mU \tilde \fg_n$ is the universal enveloping algebra of $\tilde \fg_n$ of Remark \ref{rem:tilde g def}.
More explicitly, $\mU \tilde \fg_n=H_\bullet(\SO(n-2))\otimes F$ 
with $F=\R\langle a_1,a_2,\dots \rangle$ is a free algebra in symbols $a_k$ of degree $1-k(2n-2)$, with differential \eqref{equ:dak}, see also the previous section.
In particular, $a_1$ represents the (dual) top Pontryagin class.
The action of the Hopf algebra $\mU \tilde \fg_n$ on the operad $\Graphs_n$ is such that $H_\bullet(\SO(n-2))$ acts trivially, and the action of $a_j$ is (up to an unimportant prefactor) the action of the graph in $\GC_n$ with two vertices and $2j+1$ edges, see \eqref{equ:mn intro}.

On the other hand, the homology operad is \cite{SW}
\[
 \op H := \Poiss_n \circ H_\bullet(\SO(n)),
\]
with $H_\bullet(\SO(n))$ acting trivially on $\Poiss_n$.

Our goal is to show that $\op P$ is not quasi-isomorphic to $\op H$ as a dg operad.
If it was we could find a quasi-isomorphism
\beq{equ:nfo1}
 \op H_\infty \to \op P
\eeq
for $\op H_\infty$ a(ny) cofibrant replacement of $\op H$.
We want to show by obstruction theory that such a morphism cannot exist.
First, the operad $\op H$ is Koszul, and we may pick 
\[
 \op H_\infty := \Omega(\op H^{\vee})
\]
to be the cobar construction of the Koszul dual cooperad.
More explicitly, the Koszul dual operad is identified with 
\[
 \op H^! = H(B\SO(n)) \otimes \Poiss_n\{n\}.
\]

We will try to construct \eqref{equ:nfo1} inductively on the arity $r$ and hit an obstruction in arity 3.
We will impose a filtration on $\op P$ as follows.
We say that the weight of a graph in $\Graphs_n$ is the number of edges.
We say that the lower Pontryagin classes (generators of $H_\bullet(\SO(n-2))$) have some weight $>3$, say $4$.
We say that $a_{j}$ has weight $2j+1$.
This imposes a filtration by weight on $\op P$. 

To simplify the obstruction argument, we will (try to) construct \eqref{equ:nfo1} only up to weight 3, i.e., we ignore terms of weight $\geq 4$ in $\op P$.
Mind that only a small (finite dimensional in each arity) subspace of $\op P$ lives in weight $\leq 3$:
\begin{itemize}
 \item We can have an empty graph decorated by one copy of $P_1$.
\item We can have a graph with at most three edges decorated by the trivial element of $A$.
\end{itemize}

In particular, in arity one the subspace of elements of weight $\leq 3$ is 3-dimensional.
To simplify further, we note that $H(B\SO(n))\cong \R[\Pp_4,\Pp_8,\dots,\Pp_{2n-2}]$. Hence we may equip $\op H^!$ with a (``co-weight'') grading such that each $\Pp_j$ ($j\leq 2n-6$) has co-weight 2 and $\Pp_{2n-2}$ has co-weight 1.
We only consider terms of co-weight $\leq 1$.

Now, in arity $r=1$, we have to provide a map from the generators $\op H^{\vee}(1)$ to $\op P(1)$, or dually an element of $\op H^!(1)\otimes \op P(1)$.
In co-weight $\leq 1$ we have only the $\Pp_{2n-2}$ (of degree $2n-2$) as generator.
It has to be sent to a closed element of $\op P(1)$. In weight $\leq 3$, the only closed element is $a_1$, in degree $3-2n$.
Hence, using that the map must be a quasi-isomorphism, modulo terms of co-weight $\geq 2$ or weight $\geq 4$ the arity 1 map is described by 
\[
 \Pp_{2n-2} \otimes a_1 + (\cdots) \in \op H^!(1)\otimes \op P(1).
\]
Next, consider the arity $r=2$ part.
In coweight $0$ we have the bracket ($=:b$) and product ($=:p$) generators in $\op H^!(2)$.
In coweight 1 we have the $\Pp_{2n-2} b$ and $\Pp_{2n-2}p$.
In $\op P(2)$ one can list all elements of weight $\leq 3$ (we draw only one graph for each $S_2$ orbit):
\begin{align*}
 &\begin{tikzpicture}
  \node[ext] (v1) at (0,0) {};
  \node[ext] (v2) at (.5,0) {};
 \end{tikzpicture}
& &
\begin{tikzpicture}
  \node[ext] (v1) at (0,0) {};
  \node[ext] (v2) at (.5,0) {};
\draw (v1) edge (v2);
 \end{tikzpicture}
& &
\begin{tikzpicture}
  \node[ext] (v1) at (0,0) {};
  \node[ext] (v2) at (.5,0) {};
\draw (v1) edge[bend left] (v2) (v1) edge[bend right] (v2);
 \end{tikzpicture}
& &
\begin{tikzpicture}
  \node[ext] (v1) at (0,0) {};
  \node[ext] (v2) at (.5,0) {};
\draw (v1) edge (v2) edge[bend left] (v2) edge[bend right] (v2);
 \end{tikzpicture}
\\
&
\begin{tikzpicture}
  \node[ext, label={$\scriptstyle a_1$}] (v1) at (0,0) {};
  \node[ext] (v2) at (.5,0) {};
 \end{tikzpicture}
& &
\begin{tikzpicture}
  \node[ext] (v1) at (0,0) {};
  \node[ext] (v2) at (.5,0) {};
  \node[int] (v3) at (0,0.5) {};
\draw (v1) edge (v3) edge[bend left] (v3) edge[bend right] (v3);
 \end{tikzpicture}
& &
\begin{tikzpicture}
  \node[ext] (v1) at (0,0) {};
  \node[ext] (v2) at (.5,0) {};
  \node[int] (v3) at (0.25,0.5) {};
\draw (v1) edge[bend left] (v3) edge[bend right] (v3) (v2) edge (v3);
 \end{tikzpicture}\, .
\end{align*}
Writing down the requirement that \eqref{equ:nfo1} should commute with the differentials and induce an isomorphism (say the identity) on cohomology, one quickly checks that in arity $2$ the map \eqref{equ:nfo1} must be described by
\[
 b\otimes 
 \begin{tikzpicture}
  \node[ext] (v1) at (0,0) {};
  \node[ext] (v2) at (.5,0) {};
% \draw (v1) edge (v2);
 \end{tikzpicture}
  + p \otimes 
  \begin{tikzpicture}
    \node[ext] (v1) at (0,0) {};
    \node[ext] (v2) at (.5,0) {};
  \draw (v1) edge (v2);
   \end{tikzpicture} + 
\Pp_{2n-2}p \otimes 
\begin{tikzpicture}
  \node[ext] (v1) at (0,0) {};
  \node[ext] (v2) at (.5,0) {};
\draw (v1) edge (v2) edge[bend left] (v2) edge[bend right] (v2);
 \end{tikzpicture}
+
\Pp_{2n-2}b \otimes
\begin{tikzpicture}
  \node[ext] (v1) at (0,0) {};
  \node[ext] (v2) at (.5,0) {};
\draw (v1) edge[bend left] (v2) edge[bend right] (v2);
 \end{tikzpicture}
+ (\cdots).
\]

Next consider arity $r=3$.
The co-weight $\leq 1$ elements are built using 0,1 or 2 brackets, possibly times $\Pp_{2n-2}$.
Again one computes that the double bracket $b_{1,23}b_{23}$ must be paired with an element $x\in \op P(3)$ whose differential is the graph 
\[
\begin{tikzpicture}
 \node[ext] (v1) at (0,0) {$\scriptstyle 1$};
\node[ext] (v2) at (0.5,0) {$\scriptstyle 2$};
\node[ext] (v3) at (1,0) {$\scriptstyle 3$};
\draw (v1) edge [bend right] (v2) edge[bend left] (v3);
\end{tikzpicture}
\]
Since no such element exists we have found our obstruction.

\hfill\qed

%\section{Deformation theory of the framed $n$-disks operads}
%Once we have combinatorial models for the framed $n$-disks operads, the Koszul duals and the $\BGC_n$  action we can use the standard machinery to show the following result.
%
%\begin{thm}
%\[
%H(\Der_{Op}(E_n^{fr})= \cong S^+(H(\BGC_n^{ext}))
%\]
%(TODO: degree shifts) where $\BGC_n^{ext}$ is the same as $\BGC_n$, but with extra elements added representing the derivations of the algebra $H(B\SO(n))$.
%\end{thm}
%
%Given that we have the Hopf operad model (...the details for which I have yet to write...) we can also more or less certainly show that
%\[
%H(\Der_{Hopf}(E_n^{fr})= \cong H(\BGC_n^{ext}).
%\]

%\section{The spherically framed $E_n$ operads}
%TODO

\section{PA models for equivariant cohomology}
\label{sec:equiv forms}
The goal of the remainder of the paper is to show Theorem \ref{thm:main equiv}.
In this section we discuss several models of equivariant forms, and in particular introduce the version $\Omega^{PA}_{\SO(n)}(-)$ occurring in Theorem \ref{thm:main equiv}.

\subsection{Semi algebraic sets and PA forms}\label{sec:PAforms}
Following \cite{K2} and \cite{LV} we will study the real homotopy type of the (framed or unframed) little cubes operads by considering the dgca of PA forms on (a version of) this operad.
The construction of the dgca of PA forms $\Omega_{PA}(X)$ on a semi-algebraic set $X$ was sketched in the appendix of \cite{KS}, and worked out in detail in \cite{HLTV}.
For the purposes of this paper, we will use the following properties of PA forms shown in \cite{HLTV}.
\begin{itemize}
\item The functor $\Omega_{PA}$ is a contravariant functor from the category of semi-algebraic sets to the category of dgcas. It is lax monoidal in that there is a natural quasi-isomorphism $\Omega_{PA}(X)\otimes \Omega_{PA}(X)\to \Omega_{PA}(X\times Y)$.
\item It is weakly equivalent to Sullivan's functor applied to the simplicial complex $\Omega(\sS -)$.
\item There is a dg subalgebra $\Omega_{min}(X)\subset \Omega_{PA}(X)$ containing the semi-algebraic functions, and for $\pi: X\to Y$ an SA bundle (see \cite{HLTV}) there there is a push-forward (``fiber integral'') operation
\[
 \pi_* : \Omega_{min}(X) \to \Omega_{PA}(Y)
\]
that we shall also denote with a ``fiber integral'' sign $\pi_*=\int_f$ if no confusion arises.

We note in particular that the forgetful maps $\pi: \FM_m(r+s)\to \FM_n(r)$ of the Fulton-MacPherson compactification of the configuration spaces of points satisfy the hypothesis, and hence give rise to push-forward operations.
\item The push-forward satisfies the Stokes formula \cite[Proposition 8.12]{HLTV}
\begin{equation}\label{equ:PA Stokes}
d\int_f \alpha = \int_f d\alpha+(-1)^{|\alpha|-k}\int_{\partial f} \alpha,  
\end{equation}
where $k$ is the fiber dimension and $\int_{\partial f}$ denotes the push-forward associated to the fiberwise boundary.
\end{itemize}

We shall treat the functor $\Omega_{PA}$ mostly as a ``blackbox'', using only the above formal properties, and refer the reader to loc. cit. for more information on the construction of $\Omega_{PA}$.

\subsection{Models for the homotopy quotient and equivariant forms}\label{sec:simpl models BG}
Let $G$ be a simplicial group and let $X\in G\sSet$ be a simplicial set with a $G$-action.
We considered above the homotopy quotient (with $WG$ a simplicial model for $EG$ as in Proposition \ref{prop:DDK}) 
\[
X\sslash G := (X\times WG)/G \in \sSet,
\]
and our model $\Omega(X\sslash G)\in \dgca$ for the equivariant forms on $X$.

For computational purposes it will be convenient to use a different model for the equivariant forms, based on the classical construction of the homotopy quotient as the fat geometric realization of the bar construction $X$, see \cite{Segal}.
More precisely, we define a dgca $\Omega^{fat}_{G}(X)$ as the end
\[
\Omega^{fat}_{G}(X)
:=
\int^{[j]\in {\Delta_+}}
\Omega(X\times G^j) \otimes \Omega(\Delta^j),
\]
where $\Delta_+$ is the semi-simplicial category.
The following result shows that the construction indeed gives a model for the equivariant differential forms on $X$.
\begin{prop}\label{prop:fat equiv model}
There is a zigzag of natural weak equivalences of dg commutative algebras 
\[
    \Omega^{fat}_{G}(X) \to \bullet \leftarrow \Omega(X\sslash G).
\]
\end{prop}
\begin{proof}
We begin by considering the bar construction 
\[
B(X, G) = X \times G^{\times \bullet}    
\]
which is a bisimplicial set. More concretely, we have 
\[
    B_{jk}(X, G) = X_j \times G_j^{\times k}.        
\]
We have that the construction $X\sslash G$ agrees with the "bar construction" (or total simplicial set) $\bar W(B(X, G))$, see \cite{CegarraRemedios}, of the bisimplicial set $B_\bullet(X, G)$.
Then \cite[Theorem 1.1]{CegarraRemedios} asserts that $X\sslash G$ is naturally weakly equivalent to the diagonal $\diag B(X, G)\in \sSet$ of the bisimplicial set $B(X, G)$.
(We note that a similar result is also shown in \cite{BergerHuebschmann}.)
Next, for any bisimplicial set the diagonal is equal to the realization along one simplicial dimension, see \cite[Proposition II.3.3.20]{F}
\[
(\diag B(X, G) )_k
=
|B(X, G)|_k
:=
\int_{[j]\in {\Delta}}
B_{kj}(X,G) \times \Delta^j
.
\]
Now for the bar construction we furthermore have that the realization is weakly equivalent to the fat realization 
\[
|B(X, G)|_k
\simeq
\|B(X, G)\|_k
:=
\int_{[j]\in {\Delta_+}}
B_{kj}(X,G) \times \Delta_k^j
.
\]
More precisely the weak equivalence of normal and fat realization is a classical result for good simplicial spaces shown in \cite[Appendix A]{SegalCategories}. The proof therein can be transcribed for bisimplicial sets instead of simplicial spaces using the Quillen equivalence between simplicial sets and topological spaces.
Finally, the bisimplicial set $B(X, G)$ is then good in the sense of Segal, since all "vertical" degeneracy morphisms of $B(X, G)$ are injective and hence cofibrations.
We finally obtain a chain of natural weak equivalences of simplicial sets 
\[
X\sslash G \to \bullet \leftarrow \|B(X, G)\|
\]
connecting our initial model for the homotopy quotient to the fat realization of the bar construction. We apply $\Omega(-)$ and use that $\Omega(-)$ preserves all weak equivalences and sends coends to ends by adjunction, to find a zigzag of natural weak equivalences 
\[
   \Omega( X\sslash G) \to \bullet \leftarrow \Omega(\|B(X, G)\|)
   =
   \int^{[j]\in {\Delta_+}}
\Omega(X\times G^j\times\Delta^j).
\]
Next, we have a natural weak equivalence 
\begin{equation*}
    \Omega(X\times G^j) \otimes \Omega(\Delta^j)\to \Omega(X\times G^j\times\Delta^j)
\end{equation*}
and we claim that the induced morphism 
\[
    \Omega^{fat}_{G}(X) \to \Omega(\|B(X, G)\|)
\]
is a weak equivalence.
To this end we equip both sides with the complete descending filtration by the "level" $j$ appearing in the ends on either side. It suffices to show that the associated graded morphism is a quasi-isomorphism. But the $j$-th graded piece of this morphism in turn is identified with the inclusion
\[
    \Omega(X\times G^j) \otimes \Omega_\partial(\Delta^j)\to \Omega_\partial(X\times G^j\times\Delta^j),
\]
where the subscript $(-)_\partial$ means that the forms vanish upon appliying any cosimplicial coboundary map.
The latter morphism is evidently a quasi-isomorphism, hence the proposition is shown. 
\end{proof}

The model for equivariant forms of the proposition has the advantage that it can be easily adapted to the setting of PA forms. 
If $G$ is an algebraic (topological) group that acts semi-algebraically on a semi-algebraic set $X$ we define

\begin{equation}\label{equ:om G pa def}
 \Omega^{PA}_{G}(X)
:=
\int^{[j]\in {\Delta_+}}
\Omega_{PA}(X\times G^j) \otimes \Omega(\Delta^j).
\end{equation}
Using that there is a zigzag of natural weak equivalences connecting $\Omega_{PA}(X)$ to $\Omega(\sS X)$ we then obtain, by using the argument at the end of the proof of Proposition \ref{prop:fat equiv model}:
\begin{lemma}\label{lem:Om PA zigzag}
There is a zigzag of natural weak equivalences
\[
    \Omega^{PA}_{G}(X) \to \bullet \leftarrow 
    \Omega(\sS X \sslash \sS G).
\]
\end{lemma}
\hfill \qed 

By functoriality we also obtain a zigzag of weak equivalences connecting the morphism $\Omega(* \sslash \sS G)\to \Omega(\sS X \sslash \sS G)$ to $\Omega^{PA}_{G}(*)\to \Omega^{PA}_{G}(X)$.

We also record here the smooth analog of the above constructions:
If we $G$ is a Lie group acting an a smooth manifold, then we define 
\begin{equation}\label{equ:Omega sm def}
 \Omega^{sm}_{G}(X)
:=
\int^{[j]\in {\Delta_+}}
\Omega_{dR}(X\times G^j) \otimes \Omega(\Delta^j),
\end{equation}
where $\Omega_{dR}(-)$ refers to the de Rham forms. By analogous arguments as before, we see that there is again a zigzag of natural weak equivalences 
\[
    \Omega^{sm}_{G}(X) \to \bullet \leftarrow 
    \Omega(\sS X \sslash \sS G).
\]

Finally, we will use the notation
\begin{equation}\label{equ:Omega min def}
    \Omega^{minimal}_{G}(X)
   :=
   \int^{[j]\in {\Delta_+}}
   \Omega_{min}(X\times G^j) \otimes \Omega(\Delta^j) \subset \Omega^{PA}_{G}(X),
\end{equation}
to denote the dg commutative subalgebra of (level-wise) minimal forms.

\subsection{A comparison morphism from the Cartan model}
We note that we have a direct Weyl group equivariant comparison morphism
\begin{equation}\label{equ:Phi announce}
\Phi\colon \Omega_T^{Cartan}(X)
\to 
\Omega^{sm}_T(X)
\end{equation}
from the Cartan model of section \ref{sec:compactGrecollection} to the simplicial de Rham model of \eqref{equ:Omega sm def}.
To define $\Phi$, we need some preparation.
First, that elements of $\beta \in \Omega^{sm}_T(X)$ are collections of elements $\beta_n \in \Omega_{dR}(T^n\times X) \otimes \Omega(\Delta^n)$ satisfying the compatibility relations 
\begin{equation}\label{equ:end rel}
d_\mu\beta_n  = b_\mu \beta_{n+1},    
\end{equation}
where $d_\mu$ are the cosimplicial coboundary maps using the product on and action of $T$, and $b_\mu$ are the simplicial boundary maps that act by restricting the differential form on the simplex to the $\mu$-th face of the simplex.
Let us introduce the coordinates $\phi_{ik}$ on the $i$-circle of the $k$-th copy of $T$ in $T^n\times X$. Also, let $0\leq t_1\leq \cdots\leq t_n\leq 1$ be the standard coordinates on the simplex $\Delta^j$. 
We may also use the isomorphism
\[
    \Omega_{dR}(T^n\times X)
    \cong 
    \Omega_{dR}(T^{n+1}\times X)_{T\mathrm{-basic}}
\]
where the right-hand dgca is the sub-dgca
\[
    \Omega_{dR}(T^{n+1}\times X)_{T\mathrm{-basic}}
    \subset \Omega_{dR}(T^{n+1}\times X)
\]
consisting of the differential forms that are basic for the action of $T$, diagonally on the last $T$-factor and $X$. 
Here basic means that these forms are $T$-invariant and the contraction with any of the vector fields generating the $T$-action is zero. 
Equivalently, the basic forms are the ones pulled back from the $T$-quotient.

We define the differential forms (connection forms)
\[
\eta_{in} =\sum_{j=1}^{n+1} t_j d\phi_{ij} \in 
 \Omega_{dR}(T^{n+1}\times X)\otimes \Omega(\Delta^n)
\]
with $t_{n+1}:=1$ and $\phi_{i(n+1)}$ the coordinates on the last $T$-factor. Furthermore, let
\[
\tilde u_{in} := d\eta_{in}
=
 \sum_{j=1}^n dt_j d\phi_{ij}.
\]
The forms $\tilde u_{in}$ are $T$-basic and satisfy the relations \eqref{equ:end rel} and hence assemble into an element 
\[
\tilde u_i \in \Omega_T^{sm}(X).
\]

Now let $\alpha\in \Omega_{dR}(X)^T\subset \Omega_T^{Cartan}(X)$ be a $T$-invariant de Rham form.
Then we define $\tilde\alpha\in \Omega^{sm}_T(X)$ such that 
\[
\tilde\alpha_n =
\left(\prod_{i=1}^r(1- \eta_{in} \iota_{e_{i}} ) \right) \alpha
=
\exp\left(-\sum_{i=1}^r \eta_{in} \iota_{e_{i}}\right)\alpha.
\]
One easily checks that indeed the right-hand side is basic under the additional $T$-action and that the compatibility relations \eqref{equ:end rel} are satisfied.
Finally, we define our desired morphism $\Phi$ of \eqref{equ:Phi announce} such that  

\begin{align}\label{equ:Phipdef}
    % \begin{aligned}
    \Phi(\alpha) &:= \tilde \alpha
    &
    \Phi(u_i) &:= \tilde u_i.
    % \end{aligned}
\end{align}

It is clear that $\Phi$ respects products since the operators $\eta_{in} \iota_{e_i}$ are derivations.
Furthermore, $\Phi$ respects the differentials since
\[
  d\Phi(\alpha)
  =
  d \exp\left(-\sum_{i=1}^r \eta_{in} \iota_{e_{i}}\right)\alpha
  =
  \exp\left(-\sum_{i=1}^r \eta_{in} \iota_{e_{i}}\right)
  \left( d\alpha
  -
  \sum_{i=1}^r \tilde u_i \iota_{e_{i}}
  \right)
  =
  \Phi(d_u\alpha).
\]

% Furthermore, note that picking $X=*$ (and $A=\R$) the above prescription realizes explicitly a quasi-isomorphism
% \begin{equation}\label{equ:HBGtoOmega}
% \Phi : H(BG)\cong\R[u_1,\dots, u_r]^W \to \Omega(BG).
% \end{equation}

\subsection{Cartan model in the PA setting}\label{sec:PA Cartan model}
In the relevant situation for this paper $G=\SO(n)$
we would much prefer to work with the small Cartan models of section \ref{sec:compactGrecollection}, rather than the unwieldy simplicial models of subsection \ref{sec:simpl models BG}.
However, for technical reasons apparent later we are forced to work in the semi-algebraic setting, with PA forms instead of smooth \cite{HLTV}.
Unfortunately, for such forms the definition of the Cartan model does not readily carry over since the contraction operators $\iota_{e_j}$ of \eqref{equ:dudef} are a priori not defined on the PA forms.
We will hence resort to a workaround, that will allow us to work with small ``models'' in practice nevertheless, but is arguably somewhat unsatisfying conceptually.

To this end, let $\Omega_{min}^{sm}(X) \subset \Omega_{min}(X)$ be the dg commutative subalgebra of smooth minimal forms.
Then we define the dgca
\[
\Omega_{G,min}^{Cartan}(X) := (\R[u_1,\dots,u_r]\otimes \Omega_{min}^{sm}(X))^N
\]
with the differential \eqref{equ:dudef}.
Generally, $\Omega_{G,min}^{Cartan}(X)$ is not expected to be quasi-isomorphic to $\Omega(X\sslash G)$. However, the comparison morphism $\Phi$ is defined using only algebraic operations and hence the same formula \eqref{equ:Phipdef} gives rise to a dgca morphism
\begin{equation}\label{equ:Phi announce min}
    \Phi\colon \Omega_{G,min}^{Cartan}(X)
    \to 
    \Omega^{minimal}_T(X)^W=:\Omega^{min}_G(X).
\end{equation}

A particular special case is $X=*$, for which we obtain the morphisms 
\begin{equation}\label{equ:Phi point}
  \Phi\colon \Omega_{G,min}^{Cartan}(*)
  = H(BG)\cong\R[u_1,\dots, u_r]^W
  \to 
  \Omega^{min}_G(*) \hookrightarrow  \Omega^{PA}_G(*),
\end{equation}
realizing a map from the cohomology of $BG$ into the PA model of equivariant forms of a point.

\newcommand{\HK}{\tilde K}

\section{Equivariant Kontsevich morphism}
\label{sec:graphs}
The goal of this section is to construct an equivariant model for the little disks operads, using diagrams.
The construction is essentially merely the equivariant version of a construction employed by Kontsevich \cite{K2} in order to show the real formality of these operads. Hence we shall begin by recalling Kontsevich's formality proof.

\subsection{Kontsevich's proof of real formality of \texorpdfstring{$\lD_n$}{Dn}}
M. Kontsevich showed in \cite{K2} that the operads of real chains on the little disks operads are formal.
Some of the steps and underlying technicalities where however only sketched in his paper and later developed more carefully by Hardt, Lambrechts, Voli\'c, and Turchin \cite{LV,HLTV}. 
The main step of the proof is to construct a quasi-isomorphism 
\begin{equation}\label{equ:Kmap}
 \Graphs_n \to \Omega_{PA}(\FM_n)
\end{equation}
between the graphical cooperad $\stG_n$ introduced above and the PA forms \cite{HLTV} on the Fulton-MacPherson-Axelrod-Singer compactification of the moduli space of points on $\R^n$ introduced in \cite{GJ}.
Before recalling the definition of the map above, let us recall some details on the topological operad $\FM_n$.
Let $\Conf_{N}(R^n)$ be the space of configurations of $N$ distinguishable points on $\R^n$. It is acted upon freely by the group $\R_{>0}\ltimes \R^n$ by scaling and translation.
The spaces $\FM_n(N)$ are compactifications (iterated real bordifications) of the quotient space under this action.
\[
 \FM_n = \overline{ \Conf_N(\R^n)/(\R_{>0}\ltimes \R^n)}
\]
Concretely, the compactification is defined such that the $\FM_n$ as an operad in sets rather than spaces is the free operad generated by $\Conf_N(\R^n)/(\R_{>0}\ltimes \R^n)$. From this description the definition of the operadic composition in $\FM_n$ is also obvious. The topological operad $\FM_n$ is homotopic to the little $n$-disks operad $\lD_n$.
For more details on the definition we refer the reader to \cite{GJ} or \cite{Si}.

Now let us turn to the definition of Kontsevich's map \eqref{equ:Kmap}. 
For a graph $\Gamma\in \Graphs_n(N)$ with $k$ internal vertices the map is defined by the formula 
\begin{equation}\label{equ:Kintegral}
 \Gamma \mapsto \omega_\Gamma:= \int_f \bigwedge_{(i,j)\text{ edge}} \pi_{ij}^* \Omega_{S^{n-1}} \in \Omega_{PA}(\FM_n(N))
\end{equation}
where 
\[
\pi_{ij}: \FM_n(N+k) \to \FM_{n}(2)=S^{n-1}
\]
is the forgetful map forgetting all vertices in a configuration except for the $i$-th and $j$-th, 
$\Omega_{S^{n-1}}\in \Omega_{min}(S^{n-1})$ is the round volume form and the integral is over the fiber of the SA bundle 
\[
 \FM_n(N+k) \to \FM_n(N),
\]
see also section \ref{sec:PAforms}.
The fiber integral does in general not produce a smooth differential form, and that is the reason why one has to work with PA forms instead of smooth forms.
It can be checked by using Stokes' Theorem that the map \eqref{equ:Kmap} respects the differentials and is compatible with the cooperad structure on $\Graphs_n$ and the operadic composition on $\FM_n$ in a natural way. It is furthermore a quasi-isomorphism.

The purpose of the rest of this section is to construct an equivariant version of the Kontsevich map \eqref{equ:Kmap}. Naively speaking this may be done by simply replacing PA forms by equivariant PA forms, while essentially retaining the formula \eqref{equ:Kintegral}, which, in its equivariant form, will re-appear as \eqref{equ:Kintegralequiv} below. However, in practice various steps of the proof that the map \eqref{equ:Kmap} is compatible the differential and cooperad structure will (at least naively) fail in the equivariant setting, the ``defects'' accounting exactly for the rational nontriviality of the action of $\SO(n)$ on $\FM_n$.

\begin{rem}
 Note that a priori the formula \eqref{equ:Kintegral} is defined without restrictions on the arity of vertices in the graph $\Gamma$. In particular, it in fact defines a map (and a quasi-isomorphism)
 \[
  \stG_n^2\to \Omega_{PA}(\FM_n).
 \]
As part of Kontsevich's construction of \eqref{equ:Kmap} one then has to check that this map indeed factors through the quotient cooperad $\stG_n\leftarrow \stG_n^2$. In other words, one has to check that the integrals corresponding to graphs with bivalent internal vertices vanish.
In fact, it turns out to be sufficient to check that for the graph
\[
 \Gamma = 
 \begin{tikzpicture}[baseline=-.65ex]
  \node[ext] (v) at (0,0) {};
  \node[int] (x) at (0.5,0) {};
  \node[ext] (w) at (1,0) {};
  \draw (x) edge (v) edge (w);
 \end{tikzpicture}
\]
we obtain $\omega_\Gamma=0$, which was shown by Kontsevich, cf. also Lemma \ref{lem:bivalentvanish} in the Appendix.
%In the equivariant setting discussed below we will not be able to provide such a vanishing result and hence have to work with the larger cooperad $\stG_n^2$ throughout, instead of passing to $\stG_n$. 
\end{rem}

\subsection{A propagator}\label{sec:propagator}
Let $T$ be the usual maximal torus of $\SO(n)$ and let $N=N(T)$ be its normalizer.
We choose a smooth algebraic equivariant differential form $\Omega_{sm}\in \Omega_{\SO(n),min}^{Cartan}(S^{n-1})$ on the $(n-1)$-sphere with the following properties:
\begin{enumerate}
\item $\Omega_{sm}$ is of degree $n-1$.
\item The image of $\Omega_{sm}$ in $\Omega^\bullet(S^{n-1})$ obtained by setting all $u_j=0$ is a volume form of area 1.
\item For $n$ even $d_u\Omega_{sm}=-E$ is minus the Euler class in $H(B\SO(n))$, and $d_u\Omega_{sm}=0$ for $n$ odd.
\item If $f:S^{n-1}\to S^{n-1}$ is the inversion (i.e., $f(x)=-x$) then $f^*\Omega_{sm}=(-1)^{n}\Omega_{sm}$.
\item Note that by being in $\Omega_{\SO(n),min}^{Cartan}(S^{n-1})$ the form $\Omega_{sm}$ is required to be invariant under the action of $N$. Furthermore, let us require that it is anti-invariant under the action of $\pi_0(O(n))\cong\Z_2$. In other word $\tau^*\Omega_{sm}=-\Omega_{sm}$ for $\tau\in O(n)$ a reflection that preserves $T$.
\end{enumerate}
We will call this form the (equivariant) propagator.
An explicit formula for $\Omega_{sm}$ is given in the following subsection \ref{sec:explicitpropagator}.

We will also define the element 
\[
 \Omega = \Phi(\Omega_{sm}) \in \Omega_{\SO(n)}^{min}(\FM_n(2))
\]
where $\Phi$ is the map \eqref{equ:Phi announce min}.

\subsubsection{An explicit formula for the propagator}
\label{sec:explicitpropagator}
%One may provide an explicit formula for the propagator $\Omega_{sm}$ of section \ref{sec:propagator}.
%To this end, w
We parameterize the sphere $S^{n-1}$ by a torus and a simplex as follows.
For $n$ odd we parameterize each hemisphere separately, and get
\begin{gather*}
 \{\pm 1\}\times (S^1)^k \times \Delta_{k} \to S^{n-1}\subset \R^n \\
 (\epsilon, \phi_1,\dots,\phi_k,\sigma_0,\dots,\sigma_{k}) 
\mapsto (\epsilon \sqrt{\sigma_0}, \sqrt{\sigma_1}\cos \phi_1,\sqrt{\sigma_1}\sin\phi_1,\dots , \sqrt{\sigma_k}\cos \phi_k,\sqrt{\sigma_k}\sin\phi_k) 
\end{gather*}
where $k=(n-1)/2$, and where we use the standard coordinates on the simplex $\sigma_0,\dots,\sigma_{k}\geq 0$ such that $\sum_{j=0}^{k}\sigma_j=1$. In the following, we will forget $\epsilon$ and restrict to the upper hemisphere (i.e., $\epsilon=+1$), the formula for the lower hemisphere can then be recovered by reflection anti-invariance.

For $n$ even we use the similar parameterization
\begin{gather*}
(S^1)^{k+1} \times \Delta_{k} \to S^{n-1}\subset \R^n \\
 (\phi_0,\dots,\phi_{k},\sigma_0,\dots,\sigma_{k}) 
\mapsto(\sqrt{\sigma_0}\cos \phi_0,\sqrt{\sigma_0}\sin\phi_0,\dots , \sqrt{\sigma_{k}}\cos \phi_{k},\sqrt{\sigma_{k}}\sin\phi_{k}) 
\end{gather*}
where now $k=n/2-1$.
In the above parameterization the $T=(S^1)^k$-action is obvious.

Again in these coordinates the round volume form on the sphere $S^{n-1}$ has the form
\newcommand{\cE}{{\mathcal{E}}}
\begin{align}
&\left(\frac 1 2\right)^{k-1} \iota_\cE \left( d\sqrt{\sigma_0}\prod_{j=1}^{k} (d\sigma_jd\phi_j ) \right) = \left(\frac 1 2\right)^{k}\frac 1 {\sqrt{\sigma_0}} \prod_{j=1}^k ( d\sigma_jd\phi_j) && \text{$n$ odd} \\
\label{equ:roundvol2}
&\left(\frac 1 2\right)^{k} \iota_\cE \prod_{j=0}^{k} (d\sigma_j d\phi_j )  = \pm \left(\frac 1 2\right)^{k} d\phi_0\cdots d\phi_{k} d\sigma_1\cdots d\sigma_{k}   && \text{$n$ even},
\end{align}
where we choose the orientation on the parameter space such that the above forms are positive, and where $\iota_E$ is the operator of contraction with the Euler vector field
\begin{align*}
  \cE&= \sum_{j=0}^{k} \sigma_j \frac \partial {\partial \sigma_j} .
  %& \text{$n$ odd} \\
 % \cE&= \sum_{j=0}^{k'} \sigma_j \frac \partial {\partial \sigma_j} & \text{$n$ even}.
\end{align*}
Note that the Euler vector field is defined not on the simplex but on the larger space $\hat \Delta_{k} = \{\sigma_0,\dots,\sigma_{k}\geq 0\}\supset \Delta_{k}$, and the notation in \eqref{equ:roundvol2} above shall silently mean the restriction of the stated form on $\hat \Delta_{k}$ to $\Delta_{k}$, {\em after} the contraction of the vector field.

To state the formula for the propagator, let us introduce the following notation. For $K$ a subset of indices we shall set
\begin{align*}
u^K &:= \prod_{j\in K} u_j & 
(d\sigma d\phi )^K &:= \prod_{j\in K}( d\sigma_j d\phi_j).
\end{align*}
Furthermore, we denote the complement of the subset $K$ by $\bar K$, and the number of elements by $|K|$.

The explicit formula for the propagator is then
\begin{align*}
 \Omega_{sm}
&= 
C_n \iota_\cE
\left( 
d\sqrt{\sigma_0}
\sum_{K\subset \{1,\dots,k\}}
 {(|\bar K|-\frac 1 2)!} u^K ( d\sigma d\phi )^{\bar K}
\right)
 && \text{$n$ odd} \\
 \Omega_{sm}
&= 
C_n \iota_\cE
\left(
\sum_{K\subsetneq \{0,\dots,k\}}
 {(|\bar K|-1)!} u^K (d\sigma d\phi )^{\bar K}
\right)
&& \text{$n$ even},
\end{align*}
where $x!:=\Gamma(x+1)$ with $\Gamma$ the Euler $\Gamma$-function, and $C_n$ is an unimportant normalization constant chosen such that the integral over the sphere of the above form is $1$. Concretely,
\begin{align*}
C_n = \frac{1}{2^{n/2-1} \Gamma(n/2) \mathit{vol}(S^{n-1})} = 
\begin{cases}
\frac{1}{\sqrt{\pi}(2\pi)^k } & \text{$n$ odd} \\ 
\frac{1}{(2\pi)^{k+1} } & \text{$n$ even} 
\end{cases}.
%C_n &= \frac {2 \Gamma(\frac 1 2)} { \mathit{vol}(S^{n-1})} = \frac {(n-2)!!}{(2\pi)^{\frac n 2 -1}} && \text{$n$ odd} \\
%C_n &= \frac 1 {(k')! \mathit{vol}(S^{n-1})} = \frac{(n-2)!!}{(k')!(2\pi)^{\frac n 2}} && \text{$n$ even}.
\end{align*}

\begin{lemma}\label{lem:prop properties}
 The above propagator is well defined and non-singular on the sphere, and satisfies the following conditions:
 \begin{enumerate} 
  \item For $f:S^{n-1}\to S^{n-1}$ the inversion $f(x)=-x$ we have $f^*\Omega_{sm}=(-1)^{n}\Omega_{sm}$.
  \item For $\tau\in O(n)$ a(ny) reflection that preserves the maximal torus we have that $\tau^*\Omega_{sm}=-\Omega_{sm}$.
  \item We have the following formulas for the equivariant differential:
\begin{align}
\label{equ:equivdprop1}
  (d-\sum_{i=1}^k u_i\iota_i) \Omega_{sm} &= 0 & \text{$n$ odd} \\
  \label{equ:equivdprop2}
(d-\sum_{i=0}^{k} u_i\iota_i) \Omega_{sm} &= -C_n u_0\cdots u_{k} =: -E 
& \text{$n$ even.}
\end{align}
\end{enumerate}
\end{lemma}
\begin{proof}
The above form is obviously smooth away from the singular loci of our parameterization, which are the union of the sets $\{\sigma_j=0\}$. The functions $\sigma_j$ are smooth functions on the sphere, and hence are the forms $d\sigma_j$.
The forms $d\phi_j$ has a singularity at $\{\sigma_j=0\}$, however one easily checks that the combinations $\sigma_jd\phi_j$ and $d\sigma_jd\phi_j$ occurring in our formula are smooth forms on the sphere.
Hence the only possible source of a singularity stems from the power of $\sigma_0$ in the formula for $n$ odd.
However, $\sqrt{\sigma_0}$ is one of the Euclidean coordinate functions on the sphere, and hence smooth, and hence so are all of its non-negative powers and the differential $d\sqrt{\sigma_0}$. 

To check property (2) take for $\tau$ the reflection along any coordinate axis, say that one sending $\phi_1\to -\phi_1$ in our parameterization. We then have that $\tau^* u_1=-u_1$ and $\tau^*u_j=u_j$ for $j\geq 2$ so that clearly $\tau^*\Omega_{sm}=-\Omega_{sm}$. Property (1) follows from (2) since the map $f$ is the composition of the $n$ reflections along all coordinate axis.

Finally, let us consider the stated formulas for the equivariant differentials.
Consider first the case of even $n$, for which we compute:
\begin{align*}
d\Omega_{sm} &=C_n  L_\cE
\sum_{K\subsetneq \{0,\dots,k\}}
 {(|\bar K|-1)!} u^K (d\sigma d\phi )^{\bar K}
= C_n \sum_{K\subsetneq \{0,\dots,k\}}
{|\bar K|!} u^K (d\sigma d\phi )^{\bar K}
\\
\sum_{j=0}^{k} u_j \iota_j \Omega_{sm}
&=
C_n
\iota_\cE
\sum_{K\subsetneq \{0,\dots,k\}}
 {(|\bar K|-1)!} u^K \sum_{i\in \bar K} u_i d\sigma_i ( d\sigma d\phi)^{\bar K\setminus \{i\}}
\end{align*}
In the first line we denoted the Lie derivative with respect to the Euler vector field by $L_\cE$.
Now collect powers of $u$ in the final expression in the second line (i.e., change summation variables $K\mapsto K\cup\{i\}$), yielding
\[
C_n
\iota_\cE
\sum_{\emptyset \neq K\subset \{0,\dots,k\}}
 {|\bar K|!} u^K (d\sigma d\phi )^{\bar K} \sum_{i\in K} d\sigma_i 
=
C_n
\iota_\cE
\sum_{\emptyset \neq K\subset \{0,\dots,k\}}
|\bar K|! u^K (d\sigma d\phi )^{\bar K} \sum_{i=0}^{k} d\sigma_i  \, .
\] 
To simplify further we need to carry out the contraction and obtain
\[
\iota_\cE \left((d\sigma d\phi )^{\bar K} \sum_{i=0}^{k} d\sigma_i\right)
=
(\iota_\cE (d\sigma d\phi )^{\bar K})  \underbrace{\sum_{i=0}^{k} d\sigma_i}_{=0\text{ on }\Delta_{k}}
+
(d\sigma d\phi )^{\bar K} \underbrace{\iota_\cE \sum_{i=0}^{k} d\sigma_i}_{=1\text{ on }\Delta_{k}}
=
(d\sigma d\phi )^{\bar K}.
\]
Collecting the previous computations we find that for even $n$
\begin{align*}
(d-\sum_{i=0}^{k} u_i\iota_i) \Omega_{sm} &=
C_n \sum_{K\subsetneq \{0,\dots,k\}}
{|\bar K|!} u^K (d\phi d\sigma)^{\bar K}
-
C_n \sum_{\emptyset \neq K\subset \{0,\dots,k\}}
{|\bar K|!} u^K (d\phi d\sigma)^{\bar K}
\\&=
-
C_n \, u_0\cdots u_{k},
\end{align*}
and thus \eqref{equ:equivdprop2} is shown. (Here we note that the term $K=\emptyset$ in the first sum does not contribute, since the restriction of that summand to the simplex vanishes.)

Next, let us turn to $n$ odd, and compute similarly:

\begin{align*}
d\Omega_{sm} &=C_n  L_\cE
\left( 
d\sqrt{\sigma_0}
\sum_{K\subset \{1,\dots,k\}}
 {(|\bar K|-\frac 1 2)!} u^K (d\sigma d\phi )^{\bar K}
\right)
= C_n \left( 
d\sqrt{\sigma_0}
\sum_{K\subset \{1,\dots,k\}}
 {(|\bar K|+\frac 1 2)!} u^K (d\sigma d\phi )^{\bar K}
\right)
\\
\sum_{j=1}^{k} u_j \iota_j \Omega_{sm}
&=-
C_n
\iota_\cE
\left( 
d\sqrt{\sigma_0}
\sum_{K\subset \{1,\dots,k\}}
 {(|\bar K|-\frac 1 2)!} u^K \sum_{i\in \bar K} u_i d\sigma_i (d\sigma d\phi )^{\bar K\setminus \{i\}} 
\right)
\end{align*}

Collect again powers of $u$ in the last expression (i.e., change summation variables $K\mapsto K\cup\{i\}$), yielding
\[
-C_n
\iota_\cE
\left( 
d\sqrt{\sigma_0}
\sum_{\emptyset \neq K\subset \{1,\dots,k\}}
 {(|\bar K|+\frac 1 2)!} u^K ( d\sigma d\phi)^{\bar K} \sum_{i\in K}d \sigma_i 
\right)
=
-C_n
\iota_\cE
\left( 
d\sqrt{\sigma_0}
\sum_{\emptyset \neq K\subset \{1,\dots,k\}}
 {(|\bar K|+\frac 1 2)!} u^K ( d\sigma d\phi)^{\bar K} \sum_{i=0}^k d \sigma_i 
\right) \, .
\] 
Now carry out the contraction as for even $n$ and obtain
\[
C_n
\left( 
d\sqrt{\sigma_0}
\sum_{K\subset \{1,\dots,k\}}
 {(|\bar K|+\frac 1 2)!} u^K (d\sigma d\phi )^{\bar K}
\right) \, .
\]
Comparing terms, \eqref{equ:equivdprop1} follows.

\end{proof}

% \begin{rem} TODO:
% The above formula in the odd case is suboptimal....
% 
% It is better to use a different version, that also unifies even and odd case
% 
% \[
%  \Omega_{sm}
% = 
% C_n \iota_E
% \left( 
% d\sqrt{\sigma_0}
% \sum_{K\subsetneq \{0,\dots,k'\}}
%  {(|\bar K|-\frac 1 2)!} u^K (d\phi d\sigma)^{\bar K}
% \right)
% \]
% where $x!:=\Gamma(x+1)$.
% \end{rem}

\begin{rem}\label{rem:prop restrict}
We note that the forms $\Omega_{sm}$ are stable under restriction to the $n-2$-dimensional subspace defined by $\sigma_{k}=0$, in the sense that 
\[
\Omega_{sm}^{n\text{-dim}} \mid_{S^{n-3}} = \frac{ u_{k}}{2\pi} \Omega_{sm}^{n-2\text{-dim}}.
\]
This also implies that for $n-m=2r$ even
\[
  \Omega_{sm}^{n\text{-dim}} \mid_{S^{n-m-1}} = E_{n-m} \Omega_{sm}^{n-m\text{-dim}},
\]
where $E_{n-m}=\frac{u_{k-r+1}\cdots u_{k}}{(2\pi)^{r}}$ is the orthogonal Euler class.
\end{rem}

In order to facilitate explicit computations, let us also note the following.
\begin{lemma}\label{lem:prop at north pole}
 If $n$ is odd the value of $\Omega_{sm}$ at the north pole of the sphere (i.e., at $\sigma_0=1$, $\sigma_1=\cdots=\sigma_k=0$) is 
\[
C_n \frac 1 2 \Gamma(\frac 1 2) u_1\cdots u_k= \frac 1 {2(2\pi)^k} u_1\cdots u_k.
\]\hfill\qed
\end{lemma}

\newcommand{\Grav}{\mathsf{Grav}}
\subsection{Equivariant cohomology of \texorpdfstring{$\FM_n$}{FMn}}
\label{sec:FM equiv cohom}
Let us pause here and evaluate the $\SO(n)$-equivariant cohomology of $\FM_n(r)$.
For the moment, we disregard the operad structure, we care only about the cohomology of the dg vector space of equivariant forms.
This cohomology is easily computed using the smooth Cartan model.
There is an evident spectral sequence whose $E_1$ page reads
\beq{equ:E1equiv}
 E_1 = H(B\SO(n))\otimes H(\FM_n(r)).
\eeq
Recall that by results of F. Cohen \cite{Cohen} the cohomology of $\FM_n(r)$ is described as a commutative algebra by generators and relations as follows:
The generators are (classes represented by) forms 
\[
 \alpha_{ij} = \pi_{ij}^* \Omega_{S^{n-1}},
\]
where $1\leq i\neq j\leq r$ and $\Omega_{S^{n-1}}$ is a form on $S^{n-1}$ generating $H^{n-1}(S^{n-1})$.
The relations are the following
\begin{align*}
 \alpha_{ij}&=(-1)^n\alpha_{ji}
\\
\alpha_{ij}^2&=0
\\
\alpha_{ij}\alpha_{jk}+\alpha_{jk}\alpha_{ki}+\alpha_{ki}\alpha_{ij}&=0.
\end{align*}

Now, if $n$ is odd, all the $\alpha_{ij}$ may in fact be extended to equivariantly closed forms, for example we can take for the extension
\[
 \pi_{ij}^*\Omega_{sm},
\]
where $\Omega_{sm}$ is our propagator from the preceding subsection.
Hence we conclude that for odd $n$ the spectral sequence abuts at this stage.

For even $n$ we may proceed similarly using our propagator to extend the forms, but since $d_u\Omega_{sm}=-E$ the spectral sequence does not abut here.
Rather, defining the operator $T: H(\FM_n(r))\to H(\FM_n(r))$ as
\[
 T=\sum_{i\neq j} \frac{\partial}{\partial \alpha_{ij}},
\]
the next (distinct) page in the spectral sequence reads
\[
 \R[\Pp_4,\cdots,\Pp_{2n-4}]\otimes H( H(\FM_n(r))[E], -ET).
\]
It is known that\footnote{This is for example contained in the statement of \cite[Theorem 2.18]{DCV}.}
\[
\Grav(r) :=\ker(T) \to (H(\FM_n(r))[E], -ET)  
\]
is a quasi-isomorphism for $r\geq 2$. (In fact, $\Grav$ is the gravity operad \cite{Getzler0}.)
Since we now have closed representatives for all remaining classes on the present page our spectral sequence, the spectral sequence abuts here.

Let us summarize our finding.
\begin{prop}\label{prop:FMequiv}
 The $\SO(n)$-equivariant cohomology of $\FM_n(r)$ is 
\[
 \begin{cases}
  H(B\SO(n)) &\text{if $r=1$} \\
  H(B\SO(n))\otimes H(\FM_n(r)) & \text{if $r\geq 2$ and $n$ is odd} \\
  H(B\SO(n-1))\otimes \Grav(r) &\text{if $r\geq 2$ and $n$ is even}
 \end{cases}
\]
\end{prop}

Note also that the explicit representatives we constructed are algebraic, by our choice of $\Omega_{sm}$ as an algebraic form.
The corresponding representatives in $\Omega_K^{s,PA}(\FM_n(r))$ may be obtained by just replacing $\Omega_{sm}$ by $\Omega=\Phi(\Omega_{sm})$.

% \subsection{Equivariant fiber integration and map from graphs}
% We will use the fiber integration for PA forms in two ways.
% First, the integration over the whole of $\FM_n(k)$ yields a map 
% \begin{gather*}
%   \int_{\FM_n(k)} \colon \Omega_{\SO(n),min}^{Cartan}(\FM_n(k))
%   =
%   (\R[u_1,\dots,u_r]\otimes \Omega_{min}^{sm}(\FM_n(k)) )^N
%   \to H(B\SO(n))= \R[u_1,\dots,u_r]^W \\
%   p\otimes \alpha \mapsto p \otimes \int_{\FM_n(k)}\alpha.
% \end{gather*}
% This in turn can be used to define a map 
% \[
% \psi: G_n \to H(B\SO(n))
% \] 
% from the dg Lie coalgebra such that for a graph $\Gamma$ with $k$ vertices 
% \[
%   \psi(\Gamma) := \int_{\FM_n(k)} 
% \]

% The fiber integration over the coordinates of the last $k$ points gives us a morphism 
% \[
% \int_f \colon 
% \Omega_G^{min}(\FM_n(r+k))
% \to 
% \Omega_G^{PA}(\FM_n(r)).
% \]
% It satisfies the Stokes' Theorem
% \[
% d\int_f \alpha = \int_{\partial f}\alpha + \int_f d\alpha,
% \]
% where $\int_{\partial f}$ is the analogously defined fiber integral over the fiberwise boundary.

% Next, let $\Gamma$ be a graph with $r\geq 0$ external and $k$ internal vertices.
% Then we define, analogously to \eqref{} above
% \[
% \tilde \omega_\Gamma = \  
% \]

\subsection{A Maurer-Cartan element}
\label{sec:MC int formula}
The integration for PA forms over the whole of $\FM_n(k)$ yields a map 
\begin{gather*}
  \int_{\FM_n(k)} \colon \Omega_{\SO(n),min}^{Cartan}(\FM_n(k))
  =
  (\R[u_1,\dots,u_r]\otimes \Omega_{min}^{sm}(\FM_n(k)) )^N
  \to H(B\SO(n))= \R[u_1,\dots,u_r]^W \\
  p\otimes \alpha \mapsto p \otimes \int_{\FM_n(k)}\alpha.
\end{gather*}
This in turn can be used to define a map of graded vector spaces
\[
\psi: \sG_n \to H(B\SO(n))
\] 
from the dg Lie coalgebra $\sG_n$ of section \ref{sec:k graphs} by sending a graph $\gamma\in \sG_n$ with $k$ vertices to
\[
  \psi(\gamma) := \int_{\FM_n(k)} \bigwedge_{(i,j)\in E\gamma} \pi_{ij}^* \Omega_{sm}.
\]
Here the propagator $\Omega_{sm}$ is the differential form of the last section, the product is over the edges of $\gamma$, and $\pi_{ij}: \FM_n(|V\gamma|)\to \FM_n(2)=S^{n-1}$ is the projection forgetting the locations of all points in a configuration except for the $i$-th and $j$-th. Note that we require here that $i\neq j$ -- if $\gamma$ has a tadpole (self-edge) then we set $\psi(\gamma)=0$.

Let us denote by $E=-d_u\Omega_{sm}\in H(B\SO(n))$ the Euler class if $n$ is even or 0 if $n$ is odd.
Then the morphism $\gamma$ satisfies the following compatibility relation with the Lie cobracket. Let us denote by $T=\tadpole\cdot$ the action of the tadpole graph on $\Graphs_n$. (Combinatorially, the operator $T$ acts on a graph by summing over all ways of removing one edge.)

\begin{lemma}\label{lem:int compat G}
For any $\gamma\in \sG_n$ we have 
\[
  -E\psi(T \gamma) + \psi(d\gamma) +  \frac 12 \sum \psi(\gamma') \psi(\gamma'') =0,
\]
where we use the Sweedler notation $\gamma\mapsto \sum\gamma'\otimes \gamma''$ for the Lie cobracket in $\sG_n$.
\end{lemma}
\begin{proof}
This follows from the Stokes' formula for the integral \eqref{equ:PA Stokes}.
Concretely, we have 
\[
  -E\psi(T \gamma)
  =
  \int_{\FM_n(k)} d_u \bigwedge_{(i,j)\in E\gamma} \pi_{ij}^* \Omega_{sm}
\]
by definition of $T$ and $d_u\Omega_{sm}=-E$. Next 
\[
\int_{\FM_n(k)} d_u \bigwedge_{(i,j)\in E\gamma} \pi_{ij}^* \Omega_{sm}
=
\int_{\FM_n(k)} d \bigwedge_{(i,j)\in E\gamma} \pi_{ij}^* \Omega_{sm}
\]
because for all differential forms $\alpha$ the contraction $\iota_\xi\alpha$ with a vector field $\xi$ yields a form of less than top degree, which hence vanishes upon integration.
Then, by Stokes' Theorem \eqref{equ:PA Stokes}
\[
  \int_{\FM_n(k)} d \bigwedge_{(i,j)\in E\gamma} \pi_{ij}^* \Omega_{sm}
  =
  \int_{\partial \FM_n(k)} \bigwedge_{(i,j)\in E\gamma} \pi_{ij}^* \Omega_{sm}  
\]
where on the right-hand side we integrate over the codimension one boundary strata of $\FM_n(k)$. Those are in 1-1-correspondence with subsets $A\subset \{1,\dots,k\}$ of the $k$ points, with $2\leq |A|\leq k-1$, that ``collide''. Denoting that boundary stratum by $\partial_A \FM_n(k)$ we have that 
\[
  \partial_A \FM_n(k) \cong \FM_n(k-|A|+1)\times \FM_n(|A|)
\]
and the integral splits into a product of two similar integrals by Fubinis' Theorem.
\[
  \int_{\partial_A \FM_n(k)}  \bigwedge_{(i,j)\in E\gamma} \pi_{ij}^* \Omega_{sm}  
  =
  \pm\left(\int_{\FM_n(k-|A|+1)} 
   \bigwedge_{(i,j)\in E\gamma\atop i\notin A \text{ or } j\notin A} \pi_{ij}^* \Omega_{sm} \right)
  \left(\int_{\FM_n(|A|)} 
   \bigwedge_{(i,j)\in E\gamma \atop i,j\in A} \pi_{ij}^* \Omega_{sm} \right)
   =
   \pm\psi(\gamma')\psi(\gamma'').
\]
In the last line we denote by $\gamma''\subset \gamma$ the full subgraph with vertex set $A$ and $\gamma':=\gamma/\gamma''$ is obtained by contracting $\gamma''$.
Now if $|A|=2$ and $\gamma''$ contains exactly one edge, then corresponding terms produce the differential (edge contraction).
If otherwise $\gamma''$ is not connected or has vertices of valence $\leq 2$, then $\psi(\gamma'')=0$ as shown in Appendix \ref{app:vanishing}.
The remaining terms are those occurring in the Lie cobracket.
Determining the correct signs is unfortunately somewhat tedious, and they depend on conventions such as the orientation of the underlying spaces, that we suppress here.
We just mention that we follow the conventions of \cite{LV}, where a detailed discussions of signs can be found.
\end{proof}

The correct way to interpret Lemma \ref{lem:int compat G} is that the element dual to $\psi$ defines a Maurer-Cartan element.
More precisely, we define a Maurer-Cartan element $\tm^n\in \GC_n^+\hat \otimes H(B\SO(n))$ by the formula
\begin{equation}\label{equ:MCelement}
  \tm^n = -E \tadpole + \sum_{\gamma} \gamma^* \psi(\gamma),
\end{equation}
where the sum is over graphs $\gamma$ forming a basis of $\stGC_n$, while $\gamma^*$ are the dual basis elements in $\GC_n$. 
The immediate corollary of Lemma \ref{lem:int compat G} is then:
\begin{cor}\label{cor:mMC}
The element $\tm^n$ is a Maurer-Cartan element.
\end{cor}

We claim that the gauge equivalence class of $\tm^n$ completely characterizes the (real) homotopy type of the action of $\SO(n)$ on $\FM_n$. To see this, we will use $\tm^n$ to build a model for the $\SO(n)$-equivariant differential forms in the next section.

Before we do this, let us also define a minor variation $\tilde \tm^n\in \GC_n^+\hat \otimes \Omega_{\SO(n)}^{PA}(*)$ of the Maurer-Cartan element $\tm^n$. The difference is that we need to use as the coefficient ring our (large) simplicial model $\Omega_{\SO(n)}^{PA}(*)$ (see \eqref{equ:om G pa def}) for the differential forms on $B\SO(n)$, instead of its cohomology ring $H(B\SO(n))$. 
\begin{equation}\label{equ:MCelement2}
 \tilde \tm^n = -E \tadpole + \sum_{\gamma} \gamma^* \int_{\FM_n(|V\gamma|)} \bigwedge_{(i,j)\in E\gamma} \pi_{ij}^* \Omega.
\end{equation}
This element is defined in the same way as $m$ before, except that one uses the propagator $\Omega$ of section \ref{sec:propagator} instead of the "Cartan" version $\Omega_{sm}$.
However, one checks that the two elements $\tm^n$, $\tilde \tm^n$ are in fact identified via the comparison morphism $\Phi$.
\begin{lemma}\label{lem:m to OBG}
 The element $\tilde \tm^n$ is a Maurer-Cartan element.
 It is the image of $\tm^n$ under the map of dg Lie algebras $\GC_n^+\hat \otimes H(B\SO(n))\to \GC_n^+\hat \otimes \Omega_{\SO(n)}^{PA}(*)$ induced by the map \eqref{equ:Phi point}.
 %\eqref{equ:HBGtoOmega}.
\end{lemma}
\begin{proof}
 The first statement clearly follows from the second and Corollary \ref{cor:mMC}.
 To see the second statement, denote by $I_{sm}$ and $I$ the two integrands appearing in \eqref{equ:MCelement} and \eqref{equ:MCelement2}. Then by \eqref{equ:Phipdef} the integrands differ only by contractions with vector fields on $\FM_n$, i.e., $I=\Phi(I_{sm})$ is the same as $\left(\prod_{j=1}^r(1+ \eta_j \otimes \iota_{\xi_{j}} ) \right) I_{sm}$ up to an identification of basic forms with forms on the quotient. In particular, the contractions necessarily produce forms that are not of top degree along $\FM_n$, and hence do not contribute to the integral. Hence the only surviving terms in the integrals in \eqref{equ:MCelement2} are those already present in \eqref{equ:MCelement}.
\end{proof}

\subsection{A model for the equivariant forms on \texorpdfstring{$\FM_n$}{FMn}}
Recall from section \ref{sec:k graphs} that the dg Lie algebra $\GC_n^+$ acts on the Hopf cooperad $\stG_n$, and hence the dg Lie algebra 
\[
\BGC_n := \GC_n^+\hat \otimes H(B\SO(n))
\]
acts on the Hopf cooperad $\BGraphs_n = \stG_n \otimes H(B\SO(n))$.
%in such a way that the images of the natural commutative algebra maps $H(B\SO(n))\to \BstG_n(r)$ are preserved.
Given the Maurer-Cartan element $\tm^n\in \BGC_n$ as above we consider the twist of $\BstG_n$ to a Hopf cooperad $\BstG_n^{Z^n}$ under $H(B\SO(n))$.

We claim that this is a a dg Hopf cooperad model for $\FM_n\sslash\SO(n)$.
To show this, we first apply the fiber integral of minimal forms 
\[
  \int_f\colon 
\Omega_{min}(T^k\times \FM_{n}(k+r))
\to   
\Omega_{PA}(T^k\times \FM_{n}(r))
\] 
level-wise to define a morphism of graded vector spaces
\begin{gather*}
  \int_f\colon 
\Omega^{min}_{\SO(n)}(\FM_n(k+r))
\to 
\Omega^{PA}_{\SO(n)}(\FM_n(r))\\
(\alpha_n\otimes \beta_n)_n
\mapsto 
( (\int_f\alpha_n)\otimes \beta_n)_n
.
\end{gather*}

Then we define our morphism
\begin{align*}
  \omega\colon \BstG_n^{\tm^n} &\to \Omega_{\SO(n)}^{PA}(\FM_n) \\
  \Gamma &\mapsto \omega_\Gamma
\end{align*}
by assigning to a graph $\Gamma\in \stG_n(N)$ with $k$ internal vertices the element 
\begin{equation}\label{equ:Kintegralequiv}
 \omega(\Gamma)=\omega_\Gamma = \int_f \bigwedge_{(i,j)\in E\Gamma} \pi_{ij}^* \Omega
\end{equation}
where $\Omega$ is the propagator from above and the product is again over edges.

We note that this map is well defined, in the sense that it vanishes on graphs with bivalent internal vertices by Lemma \ref{lem:bivalentvanish}.
Furthermore, we let $\Z_2=\pi_0(O(n))$ act on $\Graphs_n$ by multiplying a graph $\Gamma$ with $k$ internal vertices and $e$ edges by $(-1)^{k-e}$.
This action readily extends to $\BstG_n$, and it is elementary to check that the map $\omega$ is $\Z_2$-equivariant, given that our propagator is reflection anti-invariant.

\begin{thm}\label{thm:equivariant model}
The map $\omega$ above realizes $\BstG_n^{\tm^n}$ as an equivariant model for $\FM_n$ as an operad in $\SO(n)$-spaces.
 Concretely, $\omega$ respects the differentials, the (co)operad structure, the map from $H(B\SO(n))$ and is a quasi-isomorphism. 
 It also respects the $\Z_2$ action.
 \end{thm}
\begin{proof}
The proof is closely analogous to proof of the corresponding non-equivariant statement by Kontsevich \cite{K2} and Lambrechts-Voli\'c \cite{LV} -- essentially one just has to replace the non-equivariant propagator of op. cit. by our equivariant propagator $\Omega$.
This change does not affect signs and orientations, so we shall not discuss these in detail and just refer to \cite{LV}, whose conventions we follow.

To check that the map $\omega$ commutes with the differentials, one applies Stokes' Theorem for PA forms.
\begin{equation}\label{equ:tempdF}
 d\omega(\Gamma) = \int_{\partial f} \bigwedge_{(i,j)} \pi_{ij}^* \Omega
+
\int_{f} d\bigwedge_{(i,j)} \pi_{ij}^* \Omega
\end{equation}
where the first integral is over the fiberwise boundary. Again as in Kontsevich's proof the fiberwise boundary consists of several strata corresponding to bunches of points colliding.
Now, however, the integrals associated to these strata do not vanish. Rather, they produce precisely the terms of the Maurer-Cartan element $\tilde \tm^n$, except for the term $\tm_0:=-E\tadpole$. Using 
Lemma \ref{lem:m to OBG} these terms are accounted for by taking the twist with $\tm^n-\tm_0$ in 
\beq{equ:tempBstGm}
\BstG_n^{\tm^n}=(\BstG_n^{\tm_0})^{\tm^n-\tm_0}.
\eeq

Next, the second term of \eqref{equ:tempdF} can be simplified as follows:
\begin{align*}
 d\prod_{(i,j)} \pi_{ij}^* \Omega
&= 
 d\prod_{(i,j)} \pi_{ij}^* \Phi(\Omega_{sm})
 = d\Phi( \prod_{(i,j)} \pi_{ij}^* \Omega_{sm})
 = \Phi( d_u \prod_{(i,j)} \pi_{ij}^* \Omega_{sm}).
\end{align*}
For $n$ odd we have that $d_u\Omega_{sm}=0$ and the above expression vanishes. For $n$ even we have $d_u\Omega_{sm}=-E$ and the expression becomes 
\begin{align*}
% \sum_{e=(p,q)} (-1)^e \pi_{pq}^*\Phi(d_u \Omega_{sm}) \wedge \prod_{(i,j)\neq e} \pi_{ij}^* \Phi(\Omega_{sm}),
% &= 
-\sum_{e=(p,q)} (-1)^e  E \wedge \prod_{(i,j)\neq e} \pi_{ij}^* \Phi(\Omega_{sm}).
\end{align*}
Here we sum over edges $e=(p,q)$ in our graph $\Gamma$, and we set $(-1)^e$ to be 1 for the first edge in the ordering $-1$ for the second etc.
Inserting back into \eqref{equ:tempdF}, the second term of that equation may be identified with 
\[
-\sum_{e=(p,q)} (-1)^e E \omega(\Gamma-e) = \omega(\tm_0\cdot \Gamma).
\]
Hence this term reproduces precisely the twist by $\tm_0$ in \eqref{equ:tempBstGm}.

Finally we claim that the map $\omega$ is a quasi-isomorphism. Indeed, recall the computation of the equivariant cohomology of $\FM_n$ from Proposition \ref{prop:FMequiv}.
On the other hand, we may compute the cohomology of $\BstG_n^{\tm^n}$ by using the spectral sequence on the ``number of $u$'s''.
The first convergent is 
\[
 H(B\SO(n))\otimes H(\Graphs_n).
\]
Using that $H(\Graphs_n)\cong H(\FM_n)$ this agrees with \eqref{equ:E1equiv}.
Furthermore, one immediately checks that the further pages of the spectral sequence agree, so that indeed $H(\BstG_n^{\tm^n})\cong H_{\SO(n)}(\FM_n)$.
Finally, it is clear from looking at the representatives of the cohomology of both sides that $\omega$ induces an isomorphism on cohomology.
\end{proof}

\begin{rem}
Note that by its construction the dg Hopf cooperad $\BstG_n^{\tm^n}$ (in $\dgca^{H(B\SO(n))/}$) is acted upon by the twisted dg Lie algebra
\[
(\GC_n\hat\otimes H(B\SO(n)))^{\tm^n}.
\]
If one wants to preserve also the $\Z_2$-module structure on $\BstG_n^{\tm^n}$, one has to restrict to the $\Z_2$-invariant dg Lie subalgebra 
\[
\left( (\GC_n\hat\otimes H(B\SO(n)))^{\tm^n} \right)^{\Z_2}.
\]
\end{rem}

\section{The Maurer-Cartan element \texorpdfstring{$\tm^n$}{Z}}\label{sec:MC}
Above we have seen that the study of the real homotopy type of the $\SO(n)$-action boils down to understanding the Maurer-Cartan element $\tm^n\in \BGC_n$. This section is hence devoted to studying the gauge equivalence class of $\tm^n$. 
In fact, we will see that $\tm^n$ is gauge equivalent to a quite trivial graphical Maurer-Cartan element.

\begin{thm}\label{conjthm:main}
\begin{itemize}
 \item For $n\geq 2$ even, the Maurer-Cartan element $\tm^n$ is gauge equivalent to $-E\tadpole$, where $E\in H(B\SO(n))$ is the Euler class.
 \item For $n\geq 3$ odd, the Maurer-Cartan element $\tm^n$ is gauge equivalent to 
 \begin{equation}\label{equ:conjectured m odd}
 \sum_{j\geq 1}
 \frac{\Pp_{2n-2}^{j}}{4^j}
\frac{1}{2(2j+1)!} 
\begin{tikzpicture}[baseline=-.65ex]
 \node[int] (v) at (0,.5) {};
 \node[int] (w) at (0,-0.5) {};
 \draw (v) edge[bend left=50] (w) edge[bend right=50] (w) edge[bend left=30] (w) edge[bend right=30] (w);
 \node at (2,0) {($2j+1$ edges)};
 \node at (0,0) {$\scriptstyle\cdots$};
\end{tikzpicture}
 \end{equation}
 with $\Pp_{2n-2}\in H(B\SO(n))$ the top Pontryagin class.
 \end{itemize}
\end{thm}
In the above constructions we may replace the group $\SO(n)$ by subgroups $G=\SO(m)$ or $G=\SO(n-m)\times \SO(m)$.
The same conclusions will apply, except that the ring $H(B\SO(n))$ is replaced everywhere by its quotient $H(B\SO(m))$ or $H(B\SO(n-m))\otimes H(B\SO(m))$.
Let us call $\tm^n_G\in \GC_n^+\hotimes H(BG)$ the Maurer-Cartan element obtained by projecting the coefficient ring via the map $H(B\SO(n))\to H(BG)$. This Maurer-Cartan element governs the $G$-equivariant homotopy type of $\FM_n$. 
 %Since our construction is somewhat\footnote{The construction is as functorial as the version of equivariant forms we use.} functorial in the group $G\subset O(n)$, the above result also covers several other such groups as immediate corollaries.
 In particular, let us assemble some special consequences of the previous Theorem in the following result.
 \begin{cor}\label{conjthm:main2}
\begin{itemize}
 \item For $n$ even and $G=\SO(n-1)$, $\tm^n_G$ is gauge trivial.
 \item For $n$ odd and $G= \SO(n-2)$, $\tm^n_G$ is gauge trivial.
 \item For $n$ odd and $G= \SO(n-1)$, $\tm^n_G$ is gauge equivalent to 
  \[
 \sum_{j\geq 1}
 \frac{E^{2j}}{4^j}
\frac{1}{2(2j+1)!} 
\begin{tikzpicture}[baseline=-.65ex]
 \node[int] (v) at (0,.5) {};
 \node[int] (w) at (0,-0.5) {};
 \draw (v) edge[bend left=50] (w) edge[bend right=50] (w) edge[bend left=30] (w) edge[bend right=30] (w);
 \node at (2,0) {($2j+1$ edges)};
 \node at (0,0) {$\scriptstyle\cdots$};
\end{tikzpicture}
 \]
 where $E\in H(BG)$ is the Euler class.
\end{itemize}
\end{cor}
% \begin{proof}
%  The result may be obtained from Theorem \ref{conjthm:main} by just restricting the coefficient ring from $H(B\SO(n))$ to its quotient $H(B\SO(m))$, $m<n$.\todo{m collision}
% \end{proof}

We will prove Theorem \ref{conjthm:main} in several steps.
Throughout this section let us use the following notation:

\begin{itemize}
%\item As said, $Z_G^n\in \GC_n^+\hotimes H(BG)$ is the Maurer-Cartan element obtained from $\tm^n$ by applying the morphism $H(B\SO(n))\to H(BG)$ to the coefficients.
\item We abbreviate $Z_m^n:=Z_{\SO(m)}^n$ and $Z_{k,l}^n:=Z_{\SO(k)\times \SO(l)}^n$ for $k+l\leq n$.
\item We denote the Maurer-Cartan elements appearing in \ref{conjthm:main} by
\[
Z^n_{conj} = -E_n\tadpole.
\]
for $n$ even and 
 \begin{equation}\label{equ:Zn conj odd}
 Z^n_{conj} =
\sum_{j\geq 1}
 \frac{P_{2n-2}^{j}}{4^j}
\frac{1}{2(2j+1)!} 
\begin{tikzpicture}[baseline=-.65ex]
 \node[int] (v) at (0,.5) {};
 \node[int] (w) at (0,-0.5) {};
 \draw (v) edge[bend left=50] (w) edge[bend right=50] (w) edge[bend left=30] (w) edge[bend right=30] (w);
 \node at (2,0) {($2j+1$ edges)};
 \node at (0,0) {$\scriptstyle\cdots$};
\end{tikzpicture}
 \end{equation}
for $n$ odd.
\end{itemize}

\subsection{Explicit computations of leading order terms}
We can use the explicit integral formulas \eqref{equ:MCelement} to understand the leading order terms of the MC element $\tm^n$.
Concretely, for $n=2k+1$ odd one may compute the coefficients of the graphs of the form 
\[
\begin{tikzpicture}[baseline=-.65ex]
 \node[int] (v) at (0,.5) {};
 \node[int] (w) at (0,-0.5) {};
 \draw (v) edge[bend left=50] (w) edge[bend right=50] (w) edge[bend left=30] (w) edge[bend right=30] (w);
 \node at (2,0) {($2r+1$ edges)};
 \node at (0,0) {$\scriptstyle\cdots$};
\end{tikzpicture}.
\]
\begin{lemma}\label{lem:leading int eval}
 The integral weight of the above graph is
\[
 \left(\frac{u_1\cdots u_k}{2(2\pi)^k}\right)^{2r} =: \frac{\Pp_{2n-2}^r}{4^r}.
\]
Hence the coefficient of the same graph in $\tm^n$ (as in \eqref{equ:MCelement}) is
\[
 \frac 1 {2(2r+1)!} \frac{\Pp_{2n-2}^r}{4^r}
\]

\end{lemma}
\begin{proof}
 The integral weight is the integral appearing in \eqref{equ:MCelement}.
In our case this integral takes the form
\[
 \int_{S^{2k}} \Omega_{sm}^{2r+1}.
\]
It is an integral of an equivariantly closed form over a manifold without boundary.
Hence we may use the Berline-Vergne equivariant localization formula (see \cite[Theorem 46]{Libine}) to evaluate the integral.
The fixed point set of the torus action consists of two points, the north and south pole of the sphere. 
By symmetry, both points contribute the same value in the localization formula. Denoting the north pole by $N$ temporarily, the integral hence evaluates to
\[
 2 \frac{(2\pi)^k}{u_1\cdots u_k} (\Omega_{sm}(N))^{2r+1},
\]
where the $2$ accounts for the contribution of the south pole and the remaining prefactor comes from the localization formula.
Using now Lemma \ref{lem:prop at north pole}, we evaluate the expression to
\[
 2 \frac{(2\pi)^k}{u_1\cdots u_k} \left(\frac {u_1\cdots u_k} {2(2\pi)^k}\right)^{2r+1}
=
\left(\frac {u_1\cdots u_k} {2(2\pi)^k}\right)^{2r}
=
\frac{\Pp_{2n-2}^r}{4^r},
\]
where the top Pontryagin class is defined as 
\[
 \Pp_{2n-2} := \frac {(u_1\cdots u_k)^2} {(2\pi)^k}.
\]
This immediately yields the coeffient of that graph in the formula for $\tm$, which differs only by a conventional combinatorial prefactor, which is the size of the symmetry group of the graph.
\end{proof}

\begin{rem}
 Let us quickly comment on the somewhat ``strange'' combinatorial prefactor occurring in the Lemma.
Note that in sum-of-graphs formulas such as \eqref{equ:MCelement} there appears over basis elements $\gamma$ of a space of graphs, and the corresponding dual elements $\gamma^*$ in the dual graph space.
Now, spaces of linear combinations of graphs come with a natural basis, given by (individual) graphs, and hence so do their dual spaces.
However, conventionally, in the identification of a graph as an element of the primal space, or as an element of the dual space, one often introduces a conventional combinatorial prefactor of size the order of the symmetry group of the graph.
This makes formulas for the differential and bracket in the dual complex more pretty.
We note however that this prefactor is purely conventional and could be absorbed in different conventions.
\end{rem}

\subsection{Proof of Theorem \ref{conjthm:main} for \texorpdfstring{$n=2$}{n=2} and \texorpdfstring{$n=3$}{n=3}}

\begin{prop}\label{prop:conjmain23}
Theorem \ref{conjthm:main} holds for $n=2$  and $n=3$.
\end{prop}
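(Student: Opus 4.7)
The plan is to proceed by induction on the weight with respect to the natural filtration of $\GC_n\hat\otimes H(B\SO(n))$ by powers of the generators of $H(B\SO(n))$, combined with obstruction theory and the explicit integral computation of the preceding Lemma. Writing $m=\sum_{k\geq 0} m^{(k)}$ with $m^{(k)}$ the part of $H(B\SO(n))$-weight $k$, the Maurer-Cartan equation decouples triangularly,
\[
dm^{(k)} \;=\; -\tfrac12\sum_{i+j=k}[m^{(i)},m^{(j)}],
\]
so at each stage one attempts a gauge transformation of $H(B\SO(n))$-weight $k$ to match the conjectured target $m^{*}$.

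For $n=2$ the target is $m^{*}=E\,\tadpole$. First, the weight-zero piece $m^{(0)}\in\GC_2$ is itself a Maurer-Cartan element; it describes the induced action on the unframed cooperad $\stG_2$, and Kontsevich's real formality of $\lD_2$ translates into the gauge-triviality of $m^{(0)}$, which can be absorbed. At weight one, a direct $S^{1}$-integration of the propagator forces the coefficient of $\tadpole$ in $m^{(1)}$ to equal $1$; the remainder is a $\GC_2$-cocycle in a cohomological degree where no connected trivalent-or-higher graph exists, hence is automatically a coboundary. For weights $k\geq 2$ the target vanishes, and $m^{(k)}$ lies in $\GC_2$ in increasingly negative degree $1-2k$, where the cohomology vanishes by elementary graph-enumeration, so the inductive step continues to close.

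For $n=3$ the target is the series \eqref{equ:conjectured m odd} in two-vertex multi-edge theta-like graphs with $p_{2n-2}=p_4$. First, Kontsevich's formality for $\lD_3$ gauges $m^{(0)}$ to zero. For each weight $j\geq 1$, the preceding Lemma computes the coefficient of the $(2j+1)$-edge theta graph in $m^{(j)}$ in exact agreement with the target. The $\Z_2$-equivariance, combined with the parity sign $(-1)^{\ell-e}$, eliminates two-vertex graphs with an even number of parallel edges, and for graphs with three or more vertices the relevant cohomology of $\GC_3$ in degree $1-4j$ either vanishes by degree and connectedness counts or is exhausted by coboundaries; iterating in $j$ identifies $m$ up to gauge with the prescribed series.

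The hard part will be the cohomology-vanishing statements for $\GC_2$ and $\GC_3$ in the specified negative degrees, which guarantee that each inductive step can be performed. For the small cases $n=2, 3$ these reduce either to Kontsevich's formality input (for the nontrivial top-degree cohomology) or to an elementary enumeration of connected trivalent-or-higher graphs in small negative degrees, keeping the computation tractable; the same scheme for general odd $n\geq 5$ requires genuinely new input, which is addressed in later sections of the paper.
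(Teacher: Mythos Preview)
Your proposal has a genuine gap for $n=3$, and is unnecessarily elaborate for $n=2$.

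For $n=2$, the paper's argument is much shorter: the equivariant propagator on $S^1$ has no $u$-component, so every integral in \eqref{equ:MCelement} is an ordinary Kontsevich integral and vanishes by Kontsevich's Lemma~6.4 in \cite{K1}. Thus $m=E\,\tadpole$ on the nose, with no gauge correction needed. Your inductive scheme is not wrong in spirit, but your assertion that $H^{1-2k}(\GC_2)$ vanishes for $k\ge 2$ ``by elementary graph-enumeration'' is not justified: the negative-degree cohomology of $\GC_2$ is not known to vanish by any elementary argument (its vanishing is a well-known open conjecture). Fortunately you do not need it once you observe the integrals vanish directly.

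For $n=3$, the gap is essential. Your key step is the claim that, beyond the two-vertex theta graphs, ``the relevant cohomology of $\GC_3$ in degree $1-4j$ either vanishes by degree and connectedness counts or is exhausted by coboundaries.'' This is not elementary: in degree $-3$, for instance, there are many trivalent graphs on $6,10,\dots$ vertices satisfying the $\Z_2$-constraint, and determining whether they contribute cohomology requires genuine computation. The cohomology of $\GC_3$ is related to the cohomology of $\mathrm{Out}(F_n)$ and is far from being accessible by enumeration. The paper therefore does \emph{not} try to control $H^{1-4j}(\GC_3)$ weight by weight with the untwisted differential. Instead it argues rigidity: it shows that the $m^c$-\emph{twisted} complex $\bigl(\GC_3[[u]],\,\delta+[m^c,-]\bigr)^{\Z_2}$ has essentially no degree-$1$ cohomology, by invoking the nontrivial result of \cite{KWZ} that $\GC_3((u))$ with this twisted differential is (essentially) acyclic, and then comparing $\GC_3[[u]]$ to the quotient $u^{-1}\GC_3[u^{-1}]$. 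The passage through the Laurent-series complex and the KWZ input are what make the argument go through; your weight-by-weight scheme with untwisted $\delta$ cannot substitute for this without an independent (and currently unavailable) control on $H^{<0}(\GC_3)$.

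A minor point: your appeal to ``Kontsevich's formality of $\lD_3$'' to gauge $m^{(0)}$ to zero is unnecessary --- there are simply no graphs in $\GC_3$ of degree $1$ (a trivalent graph has $2E\ge 3V$, while degree $1$ forces $2E=3V-4$), so $m^{(0)}=0$ automatically.
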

\begin{proof}[Proof of Proposition \ref{prop:conjmain23} for $n=2$] 
The result is well known for $n=2$, see also \cite{pavolfr, GS}. In this case all integrals vanish by the Kontsevich vanishing Lemma \cite[Lemma 6.4]{K1}.
\end{proof}

For $n=3$ we already know from the discussions for general $n$ above that the Maurer-Cartan element $Z^3\in \GC_3^{\Z_2}[[u]]$ lives in the the $\Z_2$-invariant subspace of $\GC_3[[u]]$, with $u=P_4$ temporarily abbreviating the Pontryagin class of degree 4. Note that the action of $\Z_2=\pi_0(O(3))$ on the Pontryagin class is trivial, and $\GC_3^{\Z_2}\subset \GC_3$ is the dg Lie subalgebra formed by graphs of even loop order. We furthermore know by Lemma \ref{lem:leading int eval} that the Maurer-Cartan element $\tm^3$ is a deformation of the conjectured one $\tm^3_{conj}$ above,
\begin{equation}\label{equ:Z3 form}
\tm^3=\tm^3_{conj} +z_4+z_6+\cdots,
\end{equation}
where $z_k\in \GC_3^{\Z_2}[[u]]$ is some series of graphs with exactly $k$ vertices. Note here that the number of vertices of a graph of even loop order and even cohomological degree must necessarily be even, hence there is no term $z_3,z_5$ etc. We then desire to show Proposition \ref{prop:conjmain23} by an obstruction theoretic argument, for which we need the following auxiliary result:

\begin{lemma}\label{lem:cohom van}
 We have the vanishing of the first cohomology
 \[
 H^1\left( (\GC_3^{\Z_2}[[u]])^{Z^n_{conj}} \right)_{\geq 3 \text{ vertices}} =0, 
 \]
 where the subscript means that we restrict to the graded pieces of the cohomology corresponding to graphs with $\geq 3$ vertices.
\end{lemma}

\begin{proof}[Proof of Proposition \ref{prop:conjmain23} for $n=3$] 
Assuming Lemma \ref{lem:cohom van} and starting from \eqref{equ:Z3 form} one can finish the proof of the proposition by standard arguments that we now spell out explicitly: From the Maurer-Cartan equation for $\tm^3$, $\delta Z^3+\frac 12 [Z^3,Z^3]=0$, we learn by projecting to the 5-vertex part that $Dz_4=0$, where $D=\delta + [Z_{conj}^3,-]$ denotes the twisted differential. Hence by Lemma \ref{lem:cohom van} there is some $y_3\in \GC_3^{\Z_2}[[u]]$ (a series of graphs with 3 vertices), such that $Dy_3=z_4$.
But then by applying a gauge transformation with $-y_3$ to $Z^3$, we may remove the term $z_4$. Proceeding inductively and taking the limit, we have shown that $Z^3$ is gauge equivalent to $Z^3_{conj}$ as desired.
\end{proof}
It remains to show the lemma.
\begin{proof}[Proof of Lemma \ref{lem:cohom van}]
Denote by $\GC_3^{\Z_2}((u))$ the variant of $\GC_3^{\Z_2}[[u]]$ obtained by formally inverting $u$. The elements of $\GC_3^{\Z_2}((u))$ are series of graphs of even loop order with coefficients in Laurent series in $u$. Note that there is an isomorphism of dg Lie algebras
\[
\rho: \GC_3^{\Z_2}((u)) \to \GC_1^{\Z_2}((u))
\]
sending a graph $\gamma$ of loop order $\ell$ to $u^{-\ell}\gamma$.
This sends our conjectural Maurer-Cartan $Z^3_{conj}\in \GC_3^{\Z_2}((u))$ to another one $Z^1_{conj}:=\rho(Z^3_{conj})\in \GC_1^{\Z_2}\subset \GC_1^{\Z_2}((u))$, which is obtained by merely setting $P_{2n-2}=1$ in \eqref{equ:Zn conj odd}.

Now from \cite[Theorem 2]{KWZ} we know that the twisted dg Lie algebra $(\GC_1^{\Z_2})^{Z^1_{conj}}$ has one-dimensional cohomology, i.e., $H(\GC_1^{\Z_2},\delta+[Z^1_{conj},-])=\R[-1]$, and the series representing the non-trivial class consists of graphs with two vertices only. It follows that $\left(\GC_1^{\Z_2}((u))\right)^{Z^1_{conj}}$ has cohomology $\R((u))[-1]$ and via the isomorphism $\rho$ also that $\left(\GC_3^{\Z_2}((u))\right)^{Z^3_{conj}}$ has the same cohomology, concentrated again in the 2-vertex part.

Hence, suppressing the twist by $Z^3_{conj}$ temporarily for notational reasons, our complex $\GC_3^{\Z_2}[[u]]\subset \GC_3^{\Z_2}((u))$ is quasi-isomorphic to $u^{-1}\GC_3^{\Z_2}[u^{-1}]=\GC_3^{\Z_2}((u))/\GC_3^{\Z_2}[[u]]$ up to a degree shift by one and a correction in the 2-vertex part. 
Now let us compute the cohomology of $u^{-1}\GC_3^{\Z_2}[u^{-1}]$ in a different way, by using the loop order spectral sequence, whose first term is $u^{-1}H(\GC_3^{\Z_2})[u^{-1}]$. Note that $H(\GC_3)$ is concentrated in degrees $\leq -3$ and $u^{-1}$ has degree -4. 
Hence, also accounting for the degree shift by one, there cannot be classes in cohomological degree 1 with $\geq 3$ vertices in $\left(\GC_3^{\Z_2}((u))\right)^{Z^3_{conj}}$ as claimed. 
\end{proof}

\subsection{An auxiliary Theorem}

We denote gauge equvalence by the symbol $\sim$, so that the statement of Theorem \ref{conjthm:main} can be rephrased as 
\[
 Z_n^n \sim Z^n_{conj}.
\]

In particular, let us rephrase Corollary \ref{conjthm:main2} in this language.
\begin{cor}[Triviality of actions, special case of Corollary \ref{conjthm:main2}]
\label{cor:trivial}
We have $Z_m^n\sim 0$ in either of the two cases (i) $m\leq n-1$ and $n$ even or (ii) $m\leq n-2$ and $n$ odd. 
\end{cor}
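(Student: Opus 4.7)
The plan is to deduce this corollary directly from Theorem~\ref{conjthm:main} by exploiting naturality of the Maurer--Cartan construction in the group. The inclusion $\SO(m)\hookrightarrow\SO(n)$ induces a restriction map of characteristic classes
\[
\mathrm{res}\colon H(B\SO(n))\to H(B\SO(m)),
\]
and hence a strict map of dg Lie algebras $\GC_n\hat\otimes H(B\SO(n))\to\GC_n\hat\otimes H(B\SO(m))$. Inspecting the integral formula \eqref{equ:MCelement} (and the construction of the propagator), one sees that $Z^n_m$ is precisely the image of $Z^n_n$ under this map. Since the map preserves Maurer--Cartan elements and gauge equivalence, Theorem~\ref{conjthm:main} implies
\[
Z^n_m\ \sim\ \mathrm{res}(Z^n_{\text{conj}})\quad\text{in } \GC_n\hat\otimes H(B\SO(m)).
\]
So the whole task reduces to checking that $\mathrm{res}(Z^n_{\text{conj}})=0$ in each of the two cases.

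For $n$ even, $Z^n_{\text{conj}} = E_n\,\tadpole$ with $E_n$ the Euler class. As recalled in Section~\ref{sec:compactGrecollection}, the restriction $H(B\SO(n))\to H(B\SO(n-1))$ sends $E_n\mapsto 0$. For arbitrary $m\leq n-1$ we factor the restriction through $H(B\SO(n-1))$, so $\mathrm{res}(E_n)=0$ and $Z^n_m\sim 0$.

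For $n$ odd, $Z^n_{\text{conj}}$ is a formal power series without constant term in the top Pontryagin class $p_{2n-2}\in H(B\SO(n))$. By the same recollection, the restriction $H(B\SO(n))\to H(B\SO(n-1))$ sends $p_{2n-2}\mapsto E^2$, and the further restriction $H(B\SO(n-1))\to H(B\SO(n-2))$ sends $E\mapsto 0$. Composing, $p_{2n-2}$ restricts to zero in $H(B\SO(n-2))$, and a fortiori in $H(B\SO(m))$ for any $m\leq n-2$. Hence every term of $Z^n_{\text{conj}}$ vanishes after restriction, so $Z^n_m\sim 0$ as asserted. No serious obstacle is anticipated: the only substantive input is Theorem~\ref{conjthm:main} itself, and what remains is naturality of the MC construction together with the standard restriction behavior of the Euler and top Pontryagin classes.
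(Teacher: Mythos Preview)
Your proof is correct and follows essentially the same approach as the paper: Corollary~\ref{cor:trivial} is stated there as a special case of Corollary~\ref{conjthm:main2}, whose proof is precisely to restrict the coefficient ring $H(B\SO(n))\to H(B\SO(m))$ and observe that the restriction of $Z^n_{\mathrm{conj}}$ vanishes. The paper does also give an alternative, logically independent derivation of Corollary~\ref{cor:trivial} directly from the auxiliary Theorem~\ref{thm:locmain} (as a warm-up for the inductive proof of Theorem~\ref{conjthm:main}), but your route via Theorem~\ref{conjthm:main} is the one the paper itself uses for the corollary as stated.
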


Note that on $\GC_n$ we have a grading by loop order. Then, for $k$ even, we have a map of dg Lie algebras
\begin{align*}
\Phi_{k,l}^n:  \GC_{n-k}\hotimes H(B \SO(l)) &\to  \GC_n\hotimes H(B(\SO(k)\times \SO(l)))
\\
\Gamma &\mapsto E_k^L \Gamma
\end{align*}
where $E_k$ is the Euler class in $H(B\SO(k))$ and $\Gamma$ is a graph of loop order $L$.

Furthermore, restricting the group $\SO(n)$ to the subgroup $\SO(k)\times \SO(l)$ with $k+l=n$ we obtain dgca maps $H(B\SO(n))\to H(B(\SO(k)\times \SO(l)))$ and hence (restriction) maps of dg Lie algebras
\beq{equ:Rkldef}
 R_{k,l} : \GC_n^+ \hotimes H(B\SO(n))\to \GC_n^+\hotimes H(B(\SO(k)\times \SO(l))).
\eeq

In particular we have
\[
 R_{k,l} (Z_n^n) = Z_{k,l}^n.
\]
Furthermore, we shall use below that
\beq{equ:PhiZ}
\Phi_{2,n-2}^n(Z_{conj}^{n-2}) = R(Z_{conj}^n).
\eeq

The main claim is that using a version of equivariant localization one can show the following Theorem.
% \begin{thm}\label{thm:locmain}\label{thm:mainloc}
% We have that for $1\leq k\leq n-2$ even and $l\geq 0$ such that $k+l\leq n$
% \[
% Z_{k,l}^n \sim_{E_k} \Phi_{k,l}^n (Z_{l}^{n-k}).
% \]
% Here $\sim_{E_k}$ means "gauge equivalent after formally inverting $E_k$". In other words this is gauge equivalence in the graph complex  
% $\GC_n^+\hotimes H(B(\SO(k)\times \SO(l)))_{E_k}$
% with coefficients in the localized ring. 
% \end{thm}
\begin{thm}\label{thm:locmain}\label{thm:mainloc}
  We have that for $2\leq k\leq n-2$ even and $l\geq 0$ such that $k+l\leq n$
  \[
  Z_{k,l}^n \sim_{E_k} \Phi_{k,l}^n (Z_{l}^{n-k}).
  \]
  Here $\sim_{E_k}$ means "gauge equivalent after formally inverting $E_k$". In other words this is gauge equivalence in the graph complex  
  $\GC_n^+\hotimes H(B(\SO(k)\times \SO(l)))_{E_k}$
  with coefficients in the localized ring. 
  \end{thm}

Theorem \ref{thm:mainloc} will be proven in section \ref{sec:auxthmproof} below. For now, let us believe the statement and use it to derive our main Theorem \ref{conjthm:main}.

\subsection{Derivation of Theorem \ref{conjthm:main} from the auxiliary Theorem \ref{thm:locmain} }

\subsubsection{Goldman-Millson type Lemma}

Our proof of Theorem \ref{conjthm:main} will be based on the following technical Lemma, which can be seen as a version of the Goldman-Millson Theorem \cite{DolRog}.
\begin{lemma}\label{lem:MC inj}
Let $f:\fg\to \fh$ be morphism of differential graded Lie algebras such that the following holds:
\begin{enumerate}
    \item The dg Lie algebras $\fg$ and $\fh$ are each equipped with a complete grading, which we call the auxiliary grading,
    \begin{align*}
        \fg &= \prod_{k\geq 1} \fg_k
        &
        \fh &= \prod_{k\geq 1} \fh_k.
    \end{align*}
    \item The auxiliary gradings are compatible with the dg Lie structure in the sense that 
    \begin{equation}\label{equ:MC inj compat}
    \begin{aligned}
        d\fg_k &\subset \fg_{k+1}
        &
        [\fg_k,\fg_\ell] &\subset \fg_{k+\ell} 
        \\
        d\fh_k &\subset \fh_{k+1}
        &
        [\fh_k,\fg_\ell] &\subset \fh_{k+\ell}.
    \end{aligned}
\end{equation}
Note that the fact that $d$ raises the auxiliary degree homogeneously by $+1$ implies that the cohomology of $\fg$ and $\fh$ also inherits a grading by auxiliary degree.
\item The morphism $f$ satisfies $f(\fg_k)\subset\prod_{\ell\geq k}\fh_\ell$.
We shall write $f=f_0+f_1+\cdots$ with $f_j(\fg_k)\subset \fh_{k+j}$.
\item The leading part $f_0$ of $f$ induces an injective map on the parts of the cohomology of auxiliary degree $k$,
\begin{equation}\label{equ:lem MC inj}
    [f_0]:H(\fg)_k \to H(\fh)_k  
\end{equation}
for all $k$.
% (Note that $d\circ f=f\circ d$ implies that $d\circ f_j=f_j\circ d$ for all $j$.)
\end{enumerate}
Suppose that $\alpha\in \MC(\fg)$ is a Maurer-Cartan element such that $\alpha\in \prod_{k\geq 2}\fg_\alpha$ and such that the image Maurer-Cartan element $f(\alpha)\in \MC(\fh)$ is gauge trivial, $f(\alpha)\sim 0$. Then $\alpha$ is also gauge trivial,
\[
\alpha \sim 0.    
\] 
\end{lemma}
\begin{proof}
We decompose $\alpha$ in summands with respect to the auxiliary grading,
\[
\alpha = a_p + a_{p+1}+\cdots    
\]
with $a_j\in \fg_j$ and $p\geq 2$.
Projecting the Maurer-Cartan equation 
\[
    d\alpha+\frac12 [\alpha,\alpha]=0
\]
to $\fg_{p+1}$ we obtain that 
\[
d\alpha_{p} =0.    
\]
Here we are using that $p\geq 2$ and that by the compatibility of the auxiliary grading with the dg Lie structure \eqref{equ:MC inj compat} no term in the bracket can contribute to the part of auxiliary degree $p$.
By assumption the MC element $f(\alpha)\in \fh$ is gauge trivial. This means that there exists an element $y\in \fh$ of cohomological degree $0$ such that 
\begin{equation}\label{equ:MC inj p1}
f(\alpha) = 
-\frac{\exp(\ad_y)-1}{\ad_y} dy
=
-\sum_{i\geq 0} \frac{\ad_y^i}{(i+1)!}dy,
\end{equation}
with $\ad_y=[y,-]$.
We decompose 
\[
y=y_1 +y_2+\cdots 
\]
with $y_k\in \fh_k$.
From the parts of \eqref{equ:MC inj p1} of auxiliary degrees up to $p$ it then follows that
\[
d y_k = 0
\]
for $k=1,\dots, p-2$ and 
\[
dy_{p-1} = f_0(\alpha_p),    
\]
using the compatibility of $f$ with the grading.
This means that the image of the cocycle $\alpha_p$ under $f_0$ is exact. By the injectivity assumption of the Lemma this implies that $\alpha_p$ is exact, $\alpha_p=dx$. We may assume without loss of generality that $x =:x_{p-1}\in \fg_{p-1}$. (By \eqref{equ:MC inj compat} the degree $p-1$ part $x_{p-1}$ of $x$ also satisfies $\alpha_{p-1}=dx_p$.)

Now we gauge transform $\alpha$ by $x_{p-1}$ to 
\[
\alpha' =
\sum_{i\geq 0} \frac{\ad_{x_{p-1}}^i}{i!}\alpha
-\sum_{i\geq 0} \frac{\ad_{x_{p-1}}^i}{(i+1)!}dy
(\alpha_{p} - dx_{x_{p-1}}) + (\cdots) 
=:
\alpha'_{p+1} + (\cdots).
\]
The MC element $\alpha'$ is concentrated in auxiliary degrees $\geq p+1$. Furthermore, $f(\alpha')$ is still gauge trivial in $\fh$, with the gauge transformation obtained by chaining the original gauge transform by $y$ with that by $f(x_{p-1})$.
Hence we may repeat the above argument and construct an element $x_{p}\in \fg_p$ that gauge transforms $\alpha'$ to an MC element $\alpha''$ concentrated in auxiliary degrees $\geq p+2$ etc.
Finally composing the gauge transformations by all the $x_j$ yields a gauge transfomation from $\alpha$ to zero, thus showing the Lemma.
(We refer to the Appendix of \cite{DolRog} for the composition of an infinite sequence of gauge transformations.)
\end{proof}

\subsubsection{Proof of Theorem \ref{conjthm:main} provided Theorem \ref{thm:locmain}}\label{sec:mainfromlocmain}

We proceed by induction on $n$. For $n=2$ and $n=3$ Theorem \ref{conjthm:main} is known, see Proposition \ref{prop:conjmain23}.
Next we suppose that we already know that $Z_{n-2}^{n-2}\sim Z^{n-2}_{conj}$ and we desire to show that 
\beq{equ:tmpdes}
Z_{n}^{n}\sim Z^{n}_{conj}.
\eeq
% Using the induction hypothesis and the above theorem we will check that 
% \[
%  R_{n-2,2}(Z^{n}_{conj})\sim_{E_2} R_{n-2,2}(Z_{n}^{n}).
% \]
% From this statement one can then show the desired statement \eqref{equ:tmpdes}.
% We proceed by induction on $n$. 
We invoke Theorem \ref{thm:locmain} for $k=2$, $l=n-2$, assuming $n\geq 4$.\footnote{In fact, the case $n=3$ may also be tackled in this way, giving a second proof of the conjecture for $n=3$. However, in the interest of uniformity of notation, let us assume $n\geq 4$.}
Theorem \ref{thm:locmain} then states that
\[
 Z_{2,n-2}^n \sim_{u} \Phi_{2,n-2}^n (Z_{n-2}^{n-2}),
\] 
where we abbreviate the orthogonal Euler class by $u$ (it has degree $+2$).
Now by our induction hypothesis $Z_{n-2}^{n-2}\sim Z_{conj}^{n-2}$, and hence, using \eqref{equ:PhiZ} we find that
\beq{equ:tmp11}
Z_{2,n-2}^n \sim_{u}  R_{2,n-2}(Z_{conj}^n).
\eeq
%Next, note that clearly $Z_{2,n-2}^n$ is the image of $Z_n^n$ under the map 
Where $R_{2,n-2}$ is as in \eqref{equ:Rkldef}.
Note also that clearly $Z_{2,n-2}^n=R_{2,n-2}(Z_n^n)$, so that we have 
\[
 R_{n-2,2}(Z^{n}_{conj})\sim_{u} R_{n-2,2}(Z_{n}^{n}).
\]
Let us be explicit how the map of the coefficient rings underlying $R_{n-2,2}$ looks like.
For $n$ even we have
\begin{align*}
H(B\SO(n)) = \R[P_4,\dots,P_{2n-4}, E_n] &\to H(B(\SO(2)\times \SO(n-2))) \cong \R[u, P_4,\dots,P_{2n-8},E_{n-2}]
\\
E_n &\mapsto u E_{n-2} \\
P_{2n-4}&\mapsto u^2 P_{2n-8}+ E_{n-2}^2 \\
P_j&\mapsto u^2 P_{j-4} + P_j \quad \text{(for $j\neq 2n-4$)}\, .
\end{align*}
In the above and in the formulas for $n$ odd below, we assume $P_0=1$.
For $n$ odd we have
\begin{align*}
H(B\SO(n)) = \R[P_4,\dots,P_{2n-2}] &\to H(B(\SO(2)\times \SO(n-2))) \cong \R[u, P_4,\dots,P_{2n-6}]
\\
P_{2n-2}&\mapsto u^2 P_{2n-6} \\
P_j&\mapsto u^2 P_{j-4} + P_j \quad \text{(for $j\neq 2n-2$)} \, .
\end{align*}

Now localize the rings on the right hand side over $u$. We can then exchange the generator $E_{n-2}$ by $E_n:=uE_{n-2}$, respectively $P_{2n-6}$ by $P_{2n-2} := u^2P_{2n-6}$. This will make uniform the formula
for the differential for both left- and right-hand sides.
The maps above then change in that for $n$ even
\begin{equation}\label{equ:coeff map ev}
\begin{aligned}
E_n&\mapsto E_n \\ 
P_{2n-4}&\mapsto u^2 P_{2n-8}+ u^{-2} E_{n}^2 \\
P_j&\mapsto u^2 P_{j-4} + P_j 
\quad\quad\quad \text{for $j=4,8,\dots,2n-8$}
\end{aligned}
\end{equation}
and for $n$ odd
\begin{equation}\label{equ:coeff map odd}
    \begin{aligned}
P_{2n-2} &\mapsto P_{2n-2} \\  
P_{2n-6}&\mapsto u^2 P_{2n-10}+u^{-2} P_{2n-2} \\
P_j&\mapsto u^2 P_{j-4} + P_j 
\quad\quad\quad \text{for $j=4,8,\dots,2n-10$}. 
\end{aligned}
\end{equation}

Now we want to use \eqref{equ:tmp11}, or equivalently $R_{2,n-2}(Z_{n}^n) \sim_{u}  R_{2,n-2}(Z_{conj}^n)$, to show that $Z_{n}^n \sim  Z_{conj}^n$.
%  by a similar but slightly more complicated argument than in the preceding subsection.
We have to make a case distinction according to whether $n$ is even or odd.

\subsubsection*{The case of even $n$:}
Suppose first that $n$ is even.
To begin with, we define on 
$$
\fg := \GC_n\hotimes H(B\SO(n))=\GC_n\hotimes \R[P_4,\dots,P_{2n-4}, E_n]
$$ 
and on 
$$
\tilde \fh :=\GC_n\hotimes H(B\SO(2)\times B\SO(n-2))_{u}
=
\GC_n\hotimes \R[u, u^{-1},P_4,\dots,P_{2n-8}, E_n]
$$
an auxiliary grading given by the number of vertices plus the power of $E_n$ in the coefficient, minus one. We call this number the \emph{EV degree}.
In other words, elements of the homogeneous subspace $\fg_k$ (resp. $\fh_k$) of EV degree $k$ are linear combinations of expressions $\Gamma \otimes E_n^j p(\dots)$ with $\Gamma$ a graph with $k+1-j$ vertices and $p(\dots)$ some polynomial in the other variables.
Note that this grading on $\fg$ is complete, i.e.,
\[
  \fg = \prod_{k\geq 1} \fg_k,
\]
because there are only finitely many graphs with at most $k+1$ vertices, and the coefficient polynomials are non-negatively graded.
The grading on $\tilde \fh$ is a priori not complete, because the latter assertion fails since $u^{-1}$ has negative degree. But we may define the completion by EV degree 
\[
\fh := (\tilde \fh )^\wedge := \prod_{k\geq 1} \tilde \fh_k.
\]
To illustrate the difference, the infinite linear combination 
\[
\sum_{j\geq 0} \Gamma \otimes u^{-nj}E_n^{2j}   
\]
is an element of $\fh$ but not $\tilde \fh$.

Now we are ready to apply Lemma \ref{lem:MC inj} with the following data:
 \begin{itemize}
\item The dg Lie algebras $\fg$ and $\fh$ of Lemma \ref{lem:MC inj} are taken to be the dg Lie algebras $\fg$ and $\fh$ above, with the EV grading as the auxiliary grading.
The differential is the twisted differential $D=\delta+[Z^n_{conj},-]$ on $\fg$, resp. $D=\delta+[R_{2,n-2}(Z^n_{conj}),-]$ on $\fh$. As before, the Lie brackets are obtained from the Lie bracket on $\GC_n$ by linear extension over the coefficient ring. 
\item The map $f:\fg\to \fh$ is $f=R_{2,n-2}$. In other words, it is the identity on the factor $\GC_n$ and the natural morphism $H(B\SO(n))\to H(B\SO(2)\times B\SO(n-2))_{u}$ described above on the second factor.
\item The Maurer-Cartan element $\alpha\in \fg$ is $\alpha = Z^n -Z^n_{conj}$. In particular, we are done if we can apply Lemma \ref{lem:MC inj} to conclude that 
$\alpha\sim 0$ in $\fg$.
 \end{itemize}
We check the conditions of Lemma \ref{lem:MC inj}.
First, it is easy to check that the EV degree is respected by the Lie bracket, differential and the map $f$.
Next, the MC element $\alpha$ has only terms of EV degree $\geq 2$. This is because the leading term of $Z^n_n$ (the tadpole graph, of EV degree 1) was subtracted to obtain $\alpha$.
By assumption, we furthermore have that $f(\alpha)\sim 0$ is gauge trivial in $\fh$. To be more precise, we know that $f(\alpha)$ is gauge trivial as an element of the dg Lie subalgebra $\tilde \fh\subset\fh$. But this implies a fortiori that it is also gauge trivial in $\fh$.

This leaves only condition (4) of Lemma \ref{lem:MC inj} to be checked, namely the injectivity of $f_0$ on cohomology in each fixed EV degree. 
To this end we note that, as graded vector spaces,
\begin{align*}
\fg_k &=
(\GC_n\hotimes \R[E_n])_k\hotimes \R[P_4,\dots,P_{2n-4}]
=
(\GC_n\otimes \R[E_n])_k \otimes \R[P_4,\dots,P_{2n-4}]
\\
\fh_k &=
(\GC_n\hotimes \R[E_n])_k\hotimes \R[u,u^{-1},P_4,\dots,P_{2n-8}]
=
(\GC_n\otimes \R[E_n])_k\otimes \R[u,u^{-1},P_4,\dots,P_{2n-8}]
,
\end{align*}
where $(-)_k$ refers to taking a part of fixed EV degree,
and where we use the filtration on $\GC_n$ by the number of edges to complete the tensor products. The completion can then be omitted because the vector space $(\GC_n\hotimes \R[E_n])_k$ is in each case finite dimensional.
Note furthermore that the differential $D$ acts only on the first factors, as a map $(\GC_n\otimes \R[E_n])_k\to (\GC_n\otimes \R[E_n])_{k+1}$, and trivially on the second factors in the tensor product above.
This means that by the Künneth formula the part of the cohomology of EV degree $k$ is 
\begin{align*}
  H(\fg)_k &=
  H(\GC_n\otimes \R[E_n])_k \otimes \R[P_4,\dots,P_{2n-4}]
  \\
  H(\fh)_k &=
  H(\GC_n\otimes \R[E_n])_k\otimes \R[u,u^{-1},P_4,\dots,P_{2n-8}].
\end{align*}
The map $[f_0]: H(\fg)_k \to H(\fh)_k$ induced by $f$ between these spaces (that we need to check is injective, see \eqref{equ:lem MC inj}) is given as the identity on the first factors $H(\GC_n\otimes \R[E_n])_k$, and on the second factors by the morphism of graded rings (cf. \eqref{equ:coeff map ev})
\begin{equation}\label{equ:coeff map ev mod}
  \begin{aligned}
    \phi: \R[P_4,\dots,P_{2n-4}] &\to \R[u,u^{-1},P_4,\dots,P_{2n-8}] \\
  P_{2n-4}&\mapsto u^2 P_{2n-8} \\
  P_j&\mapsto u^2 P_{j-4} + P_j 
  \quad\quad\quad \text{for $j=4,8,\dots,2n-8$}.
  \end{aligned}
\end{equation}
To check that $[f_0]$ is injective it suffices to check that the map $\phi$ is injective.
But this is clearly true: Put a lexicographic order on monomials corresponding to the variable ordering $P_i>P_j$ if $i>j$ and $P_i>u>u^{-1}$.
Then leading oder terms of \eqref{equ:coeff map ev mod} read
\begin{equation*}\label{equ:coeff map ev mod 2}
    \begin{aligned}
    P_{2n-4}&\mapsto u^2 P_{2n-8} \\
    P_j&\mapsto P_j 
    \quad\quad\quad \text{for $j=4,8,\dots,2n-8$}
    \end{aligned},
\end{equation*}
and this assignment clearly yields an injection on monomials, and hence $\phi$ is injective.

This means that we have verified condition (4) of Lemma \ref{lem:MC inj}. We can hence apply the Lemma to conclude that $\alpha\sim 0$ is gauge trivial in $\fg$.
But this is equivalent to $Z^n_n$ being gauge equivalent to $Z^n_{conj}$ in $\GC_n\hotimes H(B\SO(n))$. This then shows Theorem \ref{conjthm:main} for the case of even $n$.

\subsubsection*{The case of odd $n$:}
Next consider the case of odd $n$. Here we proceed analogously to the case of even $n$ above, and apply Lemma \ref{lem:MC inj} to the following data:

\begin{itemize}
    \item The dg Lie algebras $\fg$ and $\fh$ are
    \begin{align*}
    \fg&=(\GC_n\hotimes H(B\SO(n)))^{Z^n_{conj}}
    =(\GC_n\hotimes\R[P_4,\dots,P_{2n-2}], D) \\
    \fh&=( \GC_n\hotimes H(B\SO(2)\times B\SO(n-2))_{u} )^{R_{2,n-2}(Z^n_{conj})}
    =
    (\GC_n\hotimes\R[u,u^{-1}, P_4,\dots,P_{2n-6},E_n], D).
    \end{align*}
    \item The auxiliary grading is given by the number of vertices minus one.
    In other words, elements of $\fg_k\subset \fg$ (resp. $\fh_k\subset \fh$) are linear combinations of expressions $\Gamma \otimes p(\dots)$ with $\Gamma$ a graph with $k+1-j$ vertices.
    We call the auxiliary degree the vertex-degree.
    \item The map $f:\fg\to \fh$ is again $f=R_{2,n-2}$. 
    \item The Maurer-Cartan element $\alpha\in \fg$ is $\alpha = Z^n -Z^n_{conj}$.
     \end{itemize}
    As before, one readily checks the conditions of Lemma \ref{lem:MC inj} except for the injectivity condition (4) of that Lemma. We have
    \begin{equation}\label{equ:ghk map odd} 
    \begin{aligned}
    \fg_k &=
    (\GC_n)_k\hotimes \R[P_{2n-2}] \hotimes \R[P_4,\dots,P_{2n-6}],
    &
    \fh_k &=
    (\GC_n)_k\hotimes \R[P_{2n-2}] \hotimes \R[u,u^{-1},P_4,\dots,P_{2n-10}],
    \end{aligned}
    \end{equation}
    with $\hotimes$ the completed tensor product with respect to the filtration by the number of edges in graphs.
    The differential $D$ only acts non-trivially on the left-hand factor $\GC_n\hotimes \R[P_{2n-2}]$, and is trivial on the right-hand factors $\R[P_4,\dots,P_{2n-6}, P_{2n-2}]$ and $\R[u,u^{-1},P_4,\dots,P_{2n-10}]$ in the tensor products.
    The map $f_0:\fg_k\to \fh_k$ in this case is equal to $f_0=f=\mathit{id}_{\GC_n}\otimes \psi$ and is given by the identity on the factor $(\GC_n)_k$, and maps the right-hand factors according to:
    \begin{equation}\label{equ:coeff map odd mod}
      \begin{aligned}
        \psi\colon \R[P_4,\dots,P_{2n-6}, P_{2n-2}]
        &\to 
        \R[u,u^{-1},P_4,\dots,P_{2n-10}, P_{2n-2}] \\
        P_{2n-2} & \mapsto P_{2n-2} \\
      P_{2n-6}&\mapsto u^2 P_{2n-10} + u^{-2} P_{2n-2} \\
      P_j&\mapsto u^2 P_{j-4} + P_j 
      \quad\quad\quad \text{for $j=4,8,\dots,2n-8$}
      \end{aligned}
  \end{equation}

    Also note that in contrast to the case of even $n$, graphs now can have multiple edge between any pair of vertices. Hence there are (potentially) infinitely many graphs on $k$ vertices, and we cannot replace the completed tensor products in \eqref{equ:ghk map odd} above by ordinary tensor products. 
    % This, together  makes makes the argument for injectivity of the map \eqref{equ:lem MC inj} significantly more complicated compared to the case of even $n$. 
    To compensate, we use the descending filtration in $(\GC_n\hotimes \R[P_{2n-2}],D)$ by loop order.
%  \begin{sublemma}\label{l:loop}
%  For any cocycle $\alpha$ in $(\GC_n\hotimes \R[P_{2n-2}],D)$, which is not exact, there exist an integer number $\ell<\infty$, such that $\alpha$
%  is homologous to a cocycle $\alpha\rq{}$ represented by a sum of graphs of loop order $\geq \ell$, and is not homologous to any cocycle
%  given by a sum of graphs of loop order $\geq \ell+1$.
%  \end{sublemma}
%  The number $\ell$ corresponding to a cocycle $\alpha$ given by the lemma above, will be called the loop order of a cocycle.
% We postpone the proof of the Lemma, and continue showing that the map \eqref{equ:lem MC inj} is injective.

Our goal now is to show that the map $[f_0]=[f]$ of \eqref{equ:lem MC inj} is injective.
To this end, suppose that we have a cocycle $x$ in $\fg_k$ such that the image $y:= f(x)\in \fh_k$ is exact, that is, $y=dz$ for some $z\in \fh_{k-1}$.
Concretely, we write out the three elements $x$, $y$, $z$ as 
\begin{align*}
x &= \sum_{\underline i} x_{\underline i} P^{\underline i}
&
y&=
\sum_{\underline j} y_{\underline j} \bar P^{\underline j}
&
z&=
\sum_{\underline j} z_{\underline j} \bar P^{\underline j},
\end{align*}
where $\underline i=(i_4,i_8\dots,i_{2n-6})$ and $\underline j=(j_u, j_4,j_8,\dots, j_{2n-10})$ are multi-indices, $x_{\underline i},y_{\underline j},z_{\underline j}\in \GC_n\hotimes \R[P_{2n-2}]$ and with the monomials
\begin{align*}
P^{\underline i} &:= P_4^{i_4}\cdots P_{2n-6}^{i_{2n-6}}
&
\bar P^{\underline j} &:= u^{j_u} P_4^{j_4}\cdots P_{2n-10}^{j_{2n-10}}.
\end{align*}
We want to construct an element $w=\sum_{\underline i} x_{\underline i} P^{\underline i}$ such that $dw=x$.
Mind that all the series $x,y,z,w$ may be infinite, but each graph in $\GC_n$ may appear only in finitely many $x_{\underline i}, y_{\underline j} ,z_{\underline j} w_{\underline i}$.

We construct $w$ by the following inductive algorithm.

\medskip 

{\bf Start:} Set $x^{(0)} = x$, $y^{(0)} = y$, $z^{(0)} = z$ and $w^{(0)} = 0$.

\medskip 

{\bf Step $n$:} Let 
\begin{align*}
    x^{(n)} &= \sum_{\underline i} x^{(n)}_{\underline i} P^{\underline i}
    \in \fg_k
    &
    y^{(n)}&=
    \sum_{\underline j} y^{(n)}_{\underline j} \bar P^{\underline j}
    \in \fh_k
    &
    z^{(n)}&=
    \sum_{\underline j} z^{(n)}_{\underline j} \bar P^{\underline j}\in \fh_{k-1}
    &
    w^{(n)}\in \fg_{k-1}
    \end{align*}
 be given, such that 
\begin{align}\label{equ:alg invariants}
    y^{(n)} &= f(x^{(n)}) = d z^{(n)} &
    x-x^{(n)} &= dw^{(n)}.
\end{align}
Let $\ell$ be the smallest loop order of graphs occurring in $x^{(n)}$.
Since there are only finitely many graphs of loop orders $\ell$ with $k$ vertices, there are only finitely many $\underline i$ such that $x_{\underline i}^{(n)}$ contains graphs of loop order $\ell$. 
Among those we pick the multi-index $\underline i^{max}$ that is maximal, in the sense that 
the monomial $P^{\underline i^{max}}$ is of maximal degree, and among those of maximal degree it is lexicographically largest, using the ordering on generators $P_{2n-6}>P_{2n-8}>\dots$.
Then looking at \eqref{equ:coeff map odd mod} we see that the loop order $\ell$ parts of $x_{\underline i^{max}}^{(n)}$ is the same as the loop order $\ell$ part of $y_{\underline j^{max}}^{(n)}$ with $\underline j^{max}$ such that 
\[
    \bar P^{\underline j^{max}}
    =
    (u^{2}P_{2n-10})^{i^{max}_{2n-6}}
    P_{2n-10}^{i^{max}_{2n-10}}
    \cdots 
    P_{4}^{i^{max}_{4}},
\]
because the terms of $x$ corresponding to other $\underline i$ can only contribute to lower monomials or higher loop orders by the choice of $\underline i^{max}$.
But we know that 
\[
D z^{(n)}_{\underline j^{max}} = y^{(n)}_{\underline j^{max}}.
\]
Then we define
\begin{align*}
x^{(n+1)} & := x^{(n)} - D z_{\underline j^{max}}^{(n)} P^{\underline i^{\max}}
&
y^{(n+1)} & := f(x^{(n+1)})
\\
z^{(n+1)} & := z^{(n)} - f(z^{(n)}_{\underline j^{max}} P^{\underline i^{\max}} )
&
w^{(n+1)}:= w^{(n)} + z^{(n)}_{\underline j^{max}} P^{\underline i^{\max}}
\end{align*}
and start the next algorithm step $n+1$.
Note that the conditions \eqref{equ:alg invariants} are still true at step $n+1$ by construction.

\medskip 

Finally, we define $w:= \lim_{n\to \infty} w^{(n)}\in \fg_{k-1}$. We claim that (i) this limit exists and that (ii) $\lim_{n\to \infty} x^{(n)}=0$. If this is shown then it follows by \eqref{equ:alg invariants} that $dw=x$ as desired.

To check (ii) note that $x^{(n+1)}$ still contains no graphs of order $\leq \ell -1$, and that the graphs of $\ell$ occur with coefficients being monomials that are smaller (in above ordering) than those in $x^{(n)}$. Since only finitely many monomials can be present as coefficients of loop order $\ell$ graphs, the algorithm will eventually eliminate all loop order $\ell$ graphs from $x^{(n)}$. This means that for large enough $n$ graphs occurring in $x^{(n)}$ will have arbitrarily high loop orders only, so that indeed
\[
    \lim_{n\to \infty} x^{(n)}=0.  
\]
We next check that the sequence $w^{(n)}$ converges. This means for any fixed graph $\Gamma$ its coefficient is eventually constant.
To see this note that the update rule for $z^{(n)}$ removes the coefficient for a monomial, and adds it again, but with strictly lower monomials. 
But since $\Gamma$ can occur only at finitely many of the $z_{\underline j}$, it means that can be added to some $w^{(n)}$ only finite many times.
Hence the sequence $w^{(n)}$ indeed converges.

We hence have shown that \eqref{equ:lem MC inj} is indeed injective.
But this means that we can apply Lemma \ref{lem:MC inj} and Theorem \ref{conjthm:main} follows for odd $n$ as well.
\hfill\qed

\newcommand{\hFM}{\widehat{\FM}}
\section{Equivariant localization and proof of Theorem \ref{thm:locmain}} \label{sec:auxthmproof}

% Our final task is to show Theorem \ref{thm:locmain} about the gauge equivalence of $Z_{m,n-m}^n$ and $\Phi_{m,n-m}^n (Z_{l}^{n-k})$ in the graph complex $\GC_n^+\hotimes H(B(\SO(k)\times \SO(l)))_{E_k}$ with coefficients in the localized ring localized ring.
% Our proof is somewhat technical in nature, and we will first provide a non-rigorous argument why the statement is true.
% First, as explained in the introduction, the homotopy type of $\lD_n$ as a $G$-operad for $G:=\SO(m)\times \SO(n)$ is determined by a Maurer-Cartan element in  

\subsection{A relative version of configuration space}
Consider $\R^m$ as a fixed subset of $\R^n$ by embedding it along the first $m$ coordinates.
We define the space
\[
\FM_{m,n}(r,s) \subset \FM_n(r+s)
\]
as the subspace for which the last $s$ points lie on $\R^m\subset \R^n$.
More precisely, the space $\FM_{m,n}(r,s)$ fits into a pullback diagram
\[
 \begin{tikzcd}
\FM_{m,n}(r,s) \ar{r}\ar{d} & \FM_n(r+s) \ar{d} \\
\FM_m(s) \ar{r} & \FM_n(s)
 \end{tikzcd}\, .
\]

Following Kontsevich's notation we call the $r$ first points \emph{type I points} and the others $s$ points (which lie in the $m$-dimensional subspace) \emph{type II points}.
The totality of spaces $\FM_{m,n}(-,-)$ together with $\FM_n$ forms a colored operad $\hFM_{m,n}$ such that
\begin{itemize}
\item The operations with output in color 1 are
\[
\hFM_{m,n}^1(r,s)=
\begin{cases}
\FM_n(r) & \text{for $s=0$} \\
\emptyset & \text{otherwise}
\end{cases}.
\]
\item The operations with output in color 2 are
\[
\hFM_{m,n}^2(r,s)=
\FM_{m,n}(r,s) \quad \text{for $r\geq 0$ and $s\geq 1$}.
\]
\end{itemize}
The operadic compositions are inherited from those on $\FM_n$, i.e., defined by gluing one configuration into another.

\begin{rem}
There is also a variant of the above colored operad in which one allows for operations with output in color 2 but no input in color 2.
The definition of the appropriate compactification in that case is slightly more intricate, however. In this paper we only need to work with the version above.
\end{rem}

Obviously, the colored operad $\hFM_{m,n}$ is equipped with a natural action of $O(m)\times O(n-m)$, by restriction of the $O(n)$ action on $\FM_n$.

% 
% FOR US: The colored operad $\FM_{m,n}$ is a variant of the Swiss-Cheese operad we already discussed for a different project. (Namely that the relative formality is equivalent to $\FM_{m,n}$ being formal.) I forgot who introduced these operads,... was it Budney? Do you have the reference?

% \begin{rem}
%  More generally, one can define for any operad map $f: \op Q\to \op P$, with $\op P$ equipped with a $\Lambda$-structure \cite{Fresse}, a two-colored operad $\op X_f$ extending $\op P$ in color one, with the operations in color 2 fitting into a pullback diagram
% \[
%   \begin{tikzcd}
% \op X^2_f(r,s) \ar{r}\ar{d} & \op P(r+s) \ar{d}{\Lambda} \\
% \op Q(s) \ar{r}{f} & \op P(s)
%  \end{tikzcd}\, .
% \]
% In our case (i.e., $\op X=\hFM_{m,n}$) the underlying operad map is $\FM_m\to \FM_n$.
% \end{rem}

\begin{cons}\label{cons:twocolopfromop}
Let $\op P$ be an operad. Let us define a two-colored operad $\op P^{2-col}$, such that 
\begin{align*}
 \op P^{2-col,1}(r,s) &=
\begin{cases}
 \op P(r) & \text{if $s=0$} \\
0& \text{otherwise}
\end{cases}
&
 \op P^{2-col,2}(r,s) &=
\begin{cases}
 \op P(r+s) & \text{if $s\geq 1$} \\
0& \text{otherwise},
\end{cases}
\end{align*}
with the operadic compositions inherited from $\op P$. Dually, given a cooperad $\op C$, we define a two colored cooperad $\op C^{2-col}$ by the analogous construction.
\end{cons}

\newcommand{\stGra}{\gra}

\newcommand{\sthGra}{\widehat{\stGra}}

\newcommand{\hZ}{\hat Z}

\subsection{A complex of graphs}
\label{sec:two v graphs}
Recall the cooperad $\stGra_n$ from section \ref{sec:GraDefinitions}. Set $G=\SO(m)\times \SO(n-m)$, fix a maximal torus $T$ and compact subgroup $K\cong W\ltimes T$, with $W$ the Weyl group as before.
First let us define a two colored cooperad $\stGra_{m,n}=\stGra_n^{2-col}$ from $\stGra_n$ using Construction \ref{cons:twocolopfromop}.
More concretely, we define a family of graded vector spaces $\stGra_{m,n}(r,s)=\stGra_n(r+s)$ consisting of graphs in $r$ "type I" and $s$ "type II" vertices, with the same sign and degree conventions as for $\stGra_n$. 
In pictures, we shall distinguish the type II vertices by drawing them on a "baseline", which shall be thought of representing $\R^m$, as follows
\[
\begin{tikzpicture}[scale=1.5]
\draw (-1,0) -- (1,0);
\node[ext] (u) at (-.5,0) {$1$};
\node[ext] (v) at (.5,0) {$2$};
\node[ext] (w) at (-.5,.5) {$1$};
\node[ext] (x) at (.5,.5) {$2$};
\draw (u) edge[bend left] (v) edge (w) (w) edge (v) edge(x) (x) edge (v) edge (u);
\end{tikzpicture}
\]

The pair $\stGra_n(-)$ and $\stGra_{m,n}(-,-)$ is naturally a two colored Hopf cooperad, which we call $\sthGra_{m,n}$.
Let $G=\SO(m)\times \SO(n-m)\subset \SO(n)$. Then there is a map of symmetric sequences of dgcas
\beq{equ:sthGramap}
\sthGra_{m,n}\otimes H(BG) \to \Omega_{G}^{min} (\hFM_{m,n})
\eeq
sending a graph $\Gamma$ to the product over edges
\[
  \bigwedge_{(i,j)\in E\Gamma} \pi_{ij}^* \Omega, 
\]
where $\pi_{ij}$ is the forgetful map forgetting all but points $i$ and $j$ from a configuration, and $\Omega$ is the ``propagator'', the $G$-equivariant form on $S^{n-1}$ as in section \ref{sec:propagator}. (Precisely, we restrict the $\SO(n)$-equivariant form of section \ref{sec:propagator} to a $G\subset\SO(n)$-equivariant form.)

We also define the dual two colored operad $\hGra_{m,n}$.
We consider the graded Lie algebras of invariants of those colored operads.
To describe them correctly including signs and degrees, consider the two-colored operad $\Lie_{m,n}$ whose algebras are pairs consisting of a $\Lie_m$ algebra and a $\Lie_n$ algebra, together with an action of the $\Lie_n$ algebra on the $\Lie_m$ algebra by derivations. 
Concretely, the colored operad $\Lie_{m,n}$ has three generators: A degree $1-n$ Lie bracket $\mu_2\in \Lie_{m,n}^1(2,0)$ with inputs and output in color 1, a degree $1-m$ Lie bracket $\mu_{0,2}\in \Lie_{m,n}^2(0,2)$ with input and output in color 2, and the action $\mu_{1,1}\in \Lie_{m,n}^2(1,1)$ of degree $1-n$ with output in color 2 and one input of each color. The two Lie brackets satisfy the (graded) Jacobi identity, and in addition we have the ternary relations 
\begin{align*}
    \mu_{1,1}(x, \mu_{0,2}(a,b)) &=
    \mu_{0,2}(\mu_{1,1}(x, a),b)
    +
    (-1)^{(1-n)(1-m)}\mu_{0,2}( a,\mu_{1,1}(x,b))
    \\
    \mu_{1,1}(x, \mu_{1,1}(y, a))
    &= \mu_{1,1}(\mu_2(x,y), a)+(-1)^{1-n} \mu_{1,1}(y, \mu_{1,1}(x, a)).
\end{align*}

% \begin{align*}
% \Lie_{m,n}^1(r,s) &=
% \begin{cases}
% \Lie_n(r) &\text{for $s=0$} \\
% 0 &\text{otherwise}
% \end{cases}
% \\
% \Lie_{m,n}^2(r,s) &=
% \begin{cases}
% % \Lie_m(s) &\text{for $r=0$ and $s\geq 1$} \\
% \Lie_n(r+1) \otimes \Lie_m(s) &\text{for $r\geq 0$ and $s\geq 1$} \\
% 0 &\text{otherwise}
% \end{cases}
% \end{align*}
Note in particular that there is no operation with output in color 2, but no input in color 2.
We denote the minimal resolution by $\hoLie_{m,n}$.
Concretely, $\hoLie_{m,n}$ is generated by the following operations: 
\begin{itemize}
 \item Operations $\mu_k$ with $k\geq 2$ inputs in color 1 and the output in color one, spanning a one-dimensional representation of $S_k$ in degree $1-(k-1)n$. The operations generate $\hoLie_n$.
\item Operations $\mu_{k,l}$ with $k$ inputs in color one, $l$ inputs in color 2 and output in color 2, where $k\geq 0$, $l\geq 1$, $k+l\geq 2$. The operation $\mu_{k,l}$ has degree $1-kn-(l-1)m$, and spans a one-dimensional subspace under the action of the group $S_k\times S_l$.
The $\mu_{0,l}$ generate a copy of $\hoLie_m$ inside $\hoLie_{m,n}$.
\end{itemize}

Then the (co)invariant Lie algebra can be defined as the deformation complex
\begin{align*}
\fGC_{m,n} := \Def(\hoLie_{m,n}\stackrel{0}\to \hGra_{m,n})
\cong 
\underbrace{
\prod_{k} \left( \gra_n^*(k)\otimes \K[-n]^{\otimes k}[n]\right)_{S_k}}_{=\fGC_n}
\oplus
\underbrace{
\prod_{k,l}\left(\gra_{m,n}^*(k,l)\otimes \K[-n]^{\otimes k}\otimes \K[-m]^{\otimes l}[m]\right)_{S_k\times S_l}
}_{=\fGC{m,n}'}
%\\
%\fxGC_{m,n} := \Def(\hoLie_{m,n}\stackrel{0}\to \hxGra_{m,n})
\end{align*}
of the trivial maps sending all generators to zero. (This is just the (co)invariants of the total space, up to some degree shifts.)
Via the map \eqref{equ:sthGramap} we obtain a Maurer-Cartan element
\begin{align}\label{equ:hZmn def}
\hZ_{m,n} &= 
\underbrace{
-E_mE_{n-m}\left(\tadpole+
\begin{tikzpicture}[baseline=-.65ex]
  \node[int] (v) at (0,0) {};
  \draw (v) edge[loop] (v);
  \draw (-.3,0)--(.3,0);
  \end{tikzpicture}
  \right)
}_{=:\hZ_{m,n}^1}
  +
\sum_\Gamma \Gamma \int \underbrace{\bigwedge_{(i,j)\in E\Gamma} \pi_{ij}^* \Omega}_{=: \tilde \omega_{\Gamma^*} } \in \fGC_{m,n}\hotimes H(BG).
\end{align}
Note that the first term on the right-hand side reflects the fact that to an edge one assigns a form that is not equivariantly closed. The proof that \eqref{equ:hZmn def} satisfies the Maurer-Cartan equation is analogous to the proof of Corollary \ref{cor:mMC} (resp. Lemma \ref{lem:int compat G}).

Starting from this point on, let us only focus on the case of even $n-m$, which is what we need below. 
%(FOR US: The case of odd codimension $n-m$ can also be looked at, but it lacks a couple of nice features, and I do not know how to reproduce the results below in that case. We can discuss in person if you want.) 
%$\fxGC_{m,n}$ and $\hxZ_{m,n}$, which is all we need below.
To be explicit, the leading order terms of $\hZ_{m,n}$ are
\beq{equ:hZmn}
\hZ_{m,n} = 
\underbrace{
\begin{tikzpicture}
\node[int] (v) at (0,0) {};
\node[int] (w) at (0.6,0) {};
\draw (v) edge (w);
\end{tikzpicture}
+
\begin{tikzpicture}
\draw (-.5,0) -- (.5,0);
\node[int] (v) at (0,0) {};
\node[int] (w) at (0,.5) {};
\draw (v) edge (w);
\end{tikzpicture}
%+
%\begin{tikzpicture}
%\draw (-.5,0) -- (.5,0);
%\node[int] (w) at (0,.5) {};
%\draw (w) edge[dashed] (0,0);
%\end{tikzpicture}
+
E_{n-m}
\begin{tikzpicture}
\draw (-.5,0) -- (.5,0);
\node[int] (v) at (-.3,0) {};
\node[int] (w) at (0.3,0) {};
\draw (v) edge[bend left] (w);
\end{tikzpicture}
%+
%E_{n-m}
%\begin{tikzpicture}
%\draw (-.5,0) -- (.5,0);
%\node[int] (v) at (0,0) {};
%\draw (v) edge [loop, dashed] (v);
%\end{tikzpicture}
}_{=:\hZ_{m,n}^0}
+ (\cdots)
\eeq

We denote the leading terms by $\hZ_{m,n}^0$ as indicated in the formula, and regard the remainder $\hZ_{m,n}-\hZ_{m,n}^0$ as a perturbation of $\hZ_{m,n}^0$. One easily verifies that $\hZ_{m,n}^0$ is itself a Maurer-Cartan element.

\begin{rem}
 The leading term $\hZ_{m,n}^0$ is the Maurer-Cartan element corresponding to the colored operad map
\[
 \hoLie_{m,n} \to \Lie_{m,n} \xrightarrow{f} \hGra_{m,n}\hotimes H(BG)
\]
where $f$ maps the generators as follows:
\begin{align*}
 f(\mu_2) &=
\begin{tikzpicture}
\node[ext] (v) at (0,-0.2) {};
\node[ext] (w) at (0.6,0.2) {};
\draw (v) edge (w);
\end{tikzpicture}
&
  f(\mu_{1,1}) &=
\begin{tikzpicture}
\draw (-.5,0) -- (.5,0);
\node[ext] (v) at (0,0) {};
\node[ext] (w) at (0,.5) {};
\draw (v) edge (w);
\end{tikzpicture}
&
f(\mu_{0,2}) &=
E_{n-m}
\begin{tikzpicture}
\draw (-.5,-0) -- (.5,0);
\node[ext] (v) at (-.3,0) {};
\node[ext] (w) at (0.3,0) {};
\draw (v) edge[bend left] (w);
\end{tikzpicture}\, .
\end{align*}
\end{rem}

\begin{rem}
 Let us collect several maps between the operads constructed so far.
First, there are obvious inclusions 
\begin{align}\label{equ:Liemninclusions}
 \Lie_m&\to \Lie_{m,n} & \Lie_n&\to \Lie_{m,n}\, ,
\end{align}
interpreting the left-hand side in each case as a colored operad concentrated in color 2 (respectively, color 1).

Next, suppose that $\mathcal R$ is any graded commutative ring containing an element $\lambda$ of degree $n-m$. Recall that we require $n-m$ to be even. Then there is a colored operad map (cf. also Construction \ref{cons:twocolopfromop})
\begin{equation}
 \label{equ:Liemn2Lie}
\Lie_{m,n} \to \Lie_n^{2-col} \otimes \mathcal R \,.
\end{equation}
This map is defined on generators as follows:
\begin{align*}
 \mu_2 &\mapsto \mu_2 
&
\mu_{1,1} &\mapsto \mu_2
&
\mu_{0,2} &\mapsto \lambda \mu_2.
\end{align*}

\end{rem}

\subsection{Combinatorial description of \texorpdfstring{$\fGC_{m,n}$}{fGCmn}}
The graded Lie algebra $\fGC_{m,n}$ has a semi-direct product structure owed to its definition as a deformation complex of a colored operad.
Concretely, as graded Lie algebra
\beq{equ:fsplitting}
\fGC_{m,n} = \fGC_n \ltimes \fGC_{m,n}'
\eeq
where $\fGC_n$ is spanned by graphs "without baseline", i.e., with only type I vertices, while $\fGC_{m,n}'$ is spanned by graphs with baseline, with at least one type II vertex on the baseline.
The Lie bracket on $\fGC_{m,n}'$ is by inserting into type II (i.e., baseline-)vertices. The Lie action of $\fGC_n$ is by insertion into type I vertices.

Changing the ground ring to $H(BG)$ and twisting by the Maurer-Cartan element $\hZ_{m,n}$ produces several terms in the differential.
Among them are the differential on $\fGC_n$, and terms sending $\fGC_n\otimes H(BG)\to \fGC_{m,n}'\otimes H(BG)$.

\subsection{Connectedness and the dg Lie subalgebra \texorpdfstring{$\GC_{m,n}$}{GCmn} }
We define the connected dg Lie subalgebra $\GC_{m,n}\subset \fGC_{m,n}$ to be composed of the connected graphs.
Here a graph counts as connected if any two vertices can be connected by a path of edges, irrespective of the vertex types (I or II).

Similar to \eqref{equ:fsplitting} we have a splitting of $\GC_{m,n}$ into a semi-direct product (as graded Lie algebra) 
\beq{equ:splitting}
\GC_{m,n} = \fGCc_n \ltimes \GC_{m,n}'
\eeq
where $\fGCc_n\subset \fGC_n$ is the standard (connected) graph complex, but without any valence restriction on vertices.

\begin{lemma}
The Maurer-Cartan element $\hZ_{m,n}$ lives inside the connected part $\GC_{m,n}\subset \fGC_{m,n}$.
\end{lemma}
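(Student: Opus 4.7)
The plan is to adapt Kontsevich's classical \emph{vacuum-bubble} vanishing argument (the colored, equivariant analogue of \cite[Lemma 6.4]{K1}). Recall that $\hZ_{m,n} = \sum_\Gamma \Gamma \cdot w(\Gamma)$ with weights $w(\Gamma) = \int \omega_{\Gamma^*}$ defined as fiber integrals of $\bigwedge_{(i,j) \in E\Gamma} \pi_{ij}^* \Omega$ over the relevant configuration space ($\FM_n$ for purely type I graphs, $\hFM_{m,n}$ when type II vertices are present). The aim is to show that $w(\Gamma) = 0$ whenever $\Gamma$ is disconnected.

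Fix a disconnected graph $\Gamma$ with connected components $\Gamma_1, \ldots, \Gamma_c$, $c \geq 2$. Since every propagator $\pi_{ij}^* \Omega$ depends only on the positions of the two endpoints of its edge, which lie in a single component, the integrand factorises as $\omega_{\Gamma^*} = \prod_{i=1}^c \omega_{\Gamma_i^*}$, the $i$-th factor depending only on the vertex coordinates of $\Gamma_i$. Consequently the integrand is invariant under $c$ independent copies of the translation--scaling gauge group (translations in the subspace in which each component's vertices live, together with rescaling of that component), whereas the configuration space has been quotiented by only a single diagonal copy. Hence the integrand is constant along a strictly positive-dimensional foliation on the ambient quotient --- namely the orbits of the $c-1$ \emph{relative} gauge actions between components --- and fiber-integrating first along these orbits forces $w(\Gamma) = 0$.

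The same argument carries over verbatim to the equivariant case: the propagator $\Omega$ of Section~\ref{sec:propagator} is pulled back from the sphere along $\pi_{ij}$, so both the factorisation of the integrand and its gauge-invariance are preserved under extension of scalars to $H(BG)$, and Fubini applies over this base ring. The only bookkeeping subtlety --- and the main place where one must be careful --- is to track which gauge group acts on which component: a component containing at least one type II vertex may only be translated along the fixed plane $\R^m$, whereas a purely type I component admits the full $\R^n$ translation freedom. In both cases the per-component gauge group is positive-dimensional, so the vanishing is uniform. Inspecting the explicit leading terms \eqref{equ:hZmn} confirms that they are each connected graphs, consistent with the stated conclusion $\hZ_{m,n} \in \GC_{m,n} \otimes H(BG)$.
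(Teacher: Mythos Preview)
Your proof is correct and follows essentially the same approach as the paper's: both exploit the factorisation $\omega_\Gamma = \omega_{\Gamma_1}\wedge\omega_{\Gamma_2}$ and observe that the integrand is basic under the independent translation--scaling actions on each component, so it cannot have a top form component on the configuration space, which only quotients by the diagonal action. Your version is in fact more careful than the paper's in explicitly distinguishing the gauge groups for purely type~I components (full $\R^n$ translations) versus components containing type~II vertices (only $\R^m$ translations), a point the paper glosses over.
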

\begin{proof}
Suppose $\Gamma=\Gamma_1\sqcup \Gamma_2$ is a non-connected graph, with the pieces $\Gamma_1,\Gamma_2$ non-empty and not connected to each other. Then the corresponding integrand $\tilde \omega_\Gamma=\tilde \omega_{\Gamma_1}\wedge \tilde \omega_{\Gamma_2}$ is 
basic under rescaling and translation of the points contributing to $\Gamma_1$ and $\Gamma_2$ \emph{separately}. 
Hence the form can not have a top form component on configuration space, which is obtained by quotienting out (only) the diagonal scaling and translation action. 
\end{proof}

\begin{lemma}\label{lem:vanishing}
Every graph $\Gamma$ appearing non-trivially in the non-leading part $\hZ_{m,n}-\hZ_{m,n}^0$ of the Maurer-Cartan element $\hZ_{m,n}$ satisfies:
\begin{itemize}
\item Every type I vertex has valence $\geq 3$.
\item Every type II vertex that is not connected to a type I vertex has valence $\geq 3$.
\end{itemize}
\end{lemma}
\begin{proof}
This follows from the Lemmas of Appendix \ref{app:vanishing}.
\end{proof}

\begin{cor}
Every graph $\Gamma$ appearing non-trivially in the non-leading part $\hZ_{m,n}-\hZ_{m,n}^0$ of the Maurer-Cartan element $\hZ_{m,n}$ is of loop order $\geq 1$.
\end{cor}
\begin{proof}
Suppose $\Gamma$ is a graph appearing non-trivially in the non-leading part $\hZ_{m,n}-\hZ_{m,n}^0$ of loop order zero, i.e., $\Gamma$ is a tree.
We may assume that $\Gamma$ has at least one vertex of valence $\geq 2$, since the three possible graphs with only univalent vertices are included in the leading order part $\hZ_{m,n}^0$.

Among the vertices that are at least bivalent, we always find one, say $v$, that is connected to at most one other vertex of valence $\geq 2$, and some univalent vertices $w_1,\dots,w_k$, by the combinatorics of trees.
By Lemma \ref{lem:vanishing} all univalent vertices in $\Gamma$ must be of type II and connected to type I vertices, so $v$ must be of type I and $w_1,\dots,w_k$ of type II. But by symmetry (interchanging the $w_j$) the graph vanishes unless $k=1$. (Here we used our standing assumption that $n-m$ is even.) But then $v$ has valence 2, arriving at a contradiction to Lemma \ref{lem:vanishing}. 
\end{proof}

Next, since the action of $\SO(n-m)\times \SO(m)$ naturally extends to an action of $O(n-m)\times O(m)$, we have an action of $\Z_2\times \Z_2=\pi_0(O(n-m)\times O(m))$ on $\GC_{m,n}\hotimes H(BG)$, given as follows:
\begin{itemize}
\item The generator $\tau_\perp$ of $\pi_0(O(n-m))$ acts on $E_{n-m}$ by sign, on all other generators of $H(BG)$ trivially. On a graph $\Gamma\in \fGCc_n$ of loop order $\ell$ it acts as $\tau_\perp \Gamma = (-1)^\ell\Gamma$. On a graph $\Gamma\in\GC_{m,n}'$ with $e$ edges amd $k_I$ type I vertices it acts $\tau_\perp \Gamma = (-1)^{e+v}\Gamma$.
\item The generator $\tau_\parallel$ of $\pi_0(O(n-m))$ acts on $E_{m}$ by sign, and on all other generators of $H(BG)$ trivially. On a graph $\Gamma$ of loop order $\ell$ either in $\fGCc_n$ or $\GC_{m,n}'$ it acts as $\tau_\parallel \Gamma = (-1)^\ell\Gamma$.
\end{itemize}

\begin{lemma}
The Maurer-Cartan element $\hZ_{m,n}$ is invariant under the action of $\Z_2\times \Z_2$, that is,
\[
\hZ_{m,n}\in (\GC_{m,n}\hotimes H(BG) )^{\Z_2\times \Z_2}.
\] 
\end{lemma}
\begin{proof}
First we see immediately from the definition that the first terms (proportional to $E_{n-m}E_m$) in \eqref{equ:hZmn def} are invariant. 

  Next, realize the generator $\tau_{\parallel}$ (resp. $\tau_\perp$) as a reflection of $\R^m$ along a direction in $\R^m$ (resp. in $\R^{m-1}$).
  We apply these reflections as a coordinate change in the integral \eqref{equ:hZmn def} defining $\hZ_{m,n}$.

  Note that the equivariant propagator $\Omega$ is anti-invariant under the action of both $\tau_{\parallel}$ and $\tau_\perp$, producing a sign factor $(-1)^e$, for $e$ the number of edges in the graph $\Gamma$.

  Furthermore, the reflections change the orientations of the configuration spaces integrated over as follows:
  \begin{itemize}
  \item On $\FM_n(k)$ both reflection change the orientation by $(-1)^{k-1}$.
  \item On $\FM_{m,n}(r,s)$ the reflection $\tau_\parallel$ (resp. $\tau_\perp$) changes the orientation by a factor $(-1)^{r+s-1}$ (resp. $(-1)^{s}$).
  \end{itemize}
  Taken together, the sign hence is equal to that of the action of the respective generator on $\Gamma$, thus showing the result.
\end{proof}

In particular, for $m$ odd the action of $\tau_\parallel$ on the coefficient ring $H(BG)$ is trivial. Hence we obtain:

\begin{cor}\label{cor:hZ only 2l}
  Let $m$ be odd.
  Then every graph $\Gamma$ appearing non-trivially in $\hZ_{m,n}$ is of even loop order.
  
  In particular, the non-leading part $\hZ_{m,n}-\hZ_{m,n}^0$ of the Maurer-Cartan element $\hZ_{m,n}$ is of loop order $\geq 2$.
\end{cor}

% \begin{rem}
% For cosmetic reasons one could introduce the following further valence conditions:
% (i) Every type I vertex has valence $\geq 2$ (respectively $\geq 3$) and (ii) every type II vertex that is not connected to a type I vertex has valence $\geq 2$ (resp. $\geq 3$).
% One can easily check that these conditions describe a Lie subalgebra $\GC_{m,n}^{\geq 2}\subset \GC_{m,n}$ (resp. $\GC_{m,n}^{\geq 3}\subset \GC_{m,n}$).
% Furthermore, it is shown in the Appendix that the non-leading piece of the MC element $\hZ_{m,n}-\hZ_{m,n}^0$ lives in the Lie subalgebra $\GC_{m,n}^{\geq 2}$.
% (And in fact also in $\GC_{m,n}^{\geq 3}$ if one absorbs one further term into the leading piece.)
% However, to use a somewhat unified notation, we will stick to the version of $\GC_{m,n}$ without valence condition for now.
% \end{rem}

\subsection{A path object}
Let us now work over the localized coefficient ring $H(BG)_{E_{n-m}}$, formally inverting the orthogonal Euler class.
\begin{prop}\label{prop:pathobject}
The dg Lie algebra $\alg g:=(\GC_{m,n} \otimes H(BG)_{E_{n-m}})^{\hZ_{m,n}^0}$ is a path object for $\alg h=\fGCc_n \otimes H(BG)_{E_{n-m}}$.
This means that there are morphisms of dg Lie algebras factoring the diagonal
\[
\begin{tikzcd}
\alg h \ar{r}{\iota}[swap]{\sim} & \alg g 
%\ar[shift left=2]{r}{p_0} \ar[shift right=2]{r}[shift right, swap]{p_1} 
\ar[two heads]{r}{(p_0,p_1)}
& \alg h \times \alg h
\end{tikzcd}
\]
with the right-hand map surjective and the left-hand map an (in our case injective) quasi-isomorphism.
\end{prop}
The left-hand map $\iota$ sends a graph to the sum of all graphs obtained by declaring an arbitrary subset of vertices to be of type II, multiplying by $E_{n-m}^{k-1}$, where $k$ is the number of type II vertices. For example, suppressing combinatorial prefactors, the formula schematically looks like this:
\[
\begin{tikzpicture}
\node[int] (v1) at (45:.5){};
\node[int] (v2) at (135:.5){};
\node[int] (v3) at (225:.5){};
\node[int] (v4) at (-45:.5){};
\draw (v1) edge (v2) edge (v3) edge (v4) (v2) edge (v3) edge (v4) (v3) edge (v4);
\end{tikzpicture}
\stackrel{\iota}{\mapsto}
\begin{tikzpicture}
\node[int] (v1) at (45:.5){};
\node[int] (v2) at (135:.5){};
\node[int] (v3) at (225:.5){};
\node[int] (v4) at (-45:.5){};
\draw (v1) edge (v2) edge (v3) edge (v4) (v2) edge (v3) edge (v4) (v3) edge (v4);
\end{tikzpicture}
+
%E_{n-m}
\begin{tikzpicture}
\draw (-1,-.7) -- (1,-.7);
\node[int] (v1) at (45:.5){};
\node[int] (v2) at (135:.5){};
\node[int] (v3) at (225:.5){};
\node[int] (v4) at (.5,-.7){};
\draw (v1) edge (v2) edge (v3) edge (v4) (v2) edge (v3) edge (v4) (v3) edge (v4);
\end{tikzpicture}
+
E_{n-m}%^2
\begin{tikzpicture}
\draw (-1,-.7) -- (1,-.7);
\node[int] (v1) at (45:.5){};
\node[int] (v2) at (135:.5){};
\node[int] (v3) at (-.5,-.7){};
\node[int] (v4) at (.5,-.7){};
\draw (v1) edge (v2) edge (v3) edge (v4) (v2) edge (v3) edge (v4) (v3) edge[bend left] (v4);
\end{tikzpicture}
+
E_{n-m}^2%^3
\begin{tikzpicture}
\draw (-1,-.7) -- (1,-.7);
\node[int] (v1) at (45:.5){};
\node[int] (v2) at (0,-.7){};
\node[int] (v3) at (-.5,-.7){};
\node[int] (v4) at (.5,-.7){};
\draw (v1) edge (v2) edge (v3) edge (v4) (v2) edge[bend right] (v3) edge[bend left] (v4) (v3) edge[bend left] (v4);
\end{tikzpicture}
+
E_{n-m}^3%^4
\begin{tikzpicture}
\draw (-1,-.7) -- (1.5,-.7);
\node[int] (v1) at (1,-.7){};
\node[int] (v2) at (0,-.7){};
\node[int] (v3) at (-.5,-.7){};
\node[int] (v4) at (.5,-.7){};
\draw (v1) edge[bend right] (v2) edge[bend right] (v3) edge[bend right] (v4) (v2) edge[bend right] (v3) edge[bend left] (v4) (v3) edge[bend left] (v4);
\end{tikzpicture}
\]
The first right-hand map $p_0$ is the projection to $\alg h$ that projects to the first factor of \eqref{equ:splitting}.
The map $p_1$ is the projection to the piece where all vertices are type II, multiplying by $E_{n-m}^{1-k}$, where $k$ is the number of type II vertices, sending all graphs with type I vertices to zero. For example:
\begin{align*}
\begin{tikzpicture}
\draw (-1,-.7) -- (1,-.7);
\node[int] (v1) at (45:.5){};
\node[int] (v2) at (135:.5){};
\node[int] (v3) at (-.5,-.7){};
\node[int] (v4) at (.5,-.7){};
\draw (v1) edge (v2) edge (v3) edge (v4) (v2) edge (v3) edge (v4) (v3) edge[bend left] (v4);
\end{tikzpicture}
&\stackrel{p_1}{\mapsto}
0
&
\begin{tikzpicture}[yshift=.7cm]
\draw (-1,-.7) -- (1.5,-.7);
\node[int] (v1) at (1,-.7){};
\node[int] (v2) at (0,-.7){};
\node[int] (v3) at (-.5,-.7){};
\node[int] (v4) at (.5,-.7){};
\draw (v1) edge[bend right] (v2) edge[bend right] (v3) edge[bend right] (v4) (v2) edge[bend right] (v3) edge[bend left] (v4) (v3) edge[bend left] (v4);
\end{tikzpicture}
\stackrel{p_1}{\mapsto}
E_{n-m}^{-3}%^{-4}
\begin{tikzpicture}
\node[int] (v1) at (45:.5){};
\node[int] (v2) at (135:.5){};
\node[int] (v3) at (225:.5){};
\node[int] (v4) at (-45:.5){};
\draw (v1) edge (v2) edge (v3) edge (v4) (v2) edge (v3) edge (v4) (v3) edge (v4);
\end{tikzpicture}
\end{align*}

\begin{proof}
It is an exercise to check that the maps respect the dg Lie structure:
The easiest way is to conduct a small graphical computation. 
%Alternatively, to see that $p_0$ and $p_1$ are dg Lie algebra 
% maps, one uses the representation of the graph complex as deformation complex. Then $p_0$ and $p_1$ are essentially pull-backs under the inclusions \eqref{equ:Liemninclusions}.
% For the map $\iota$ a similar but slighter longer argument is possible, using the map \eqref{equ:Liemn2Lie}, inducing the second arrow in the composition
% \begin{multline*}
%  \fGC_{n}\otimes H(BG)_{E_{n-m}}= \Def(\hoLie_n\to \gra_n\otimes H(BG)_{E_{n-m}})
% \to \Def(\hoLie^{2-col}_n\to \Gra_n^{2-col}\otimes H(BG)_{E_{n-m}})
% \\
% \to 
% \Def(\hoLie_{m,n}\to \Gra_n^{2-col}\otimes H(BG)_{E_{n-m}}) \cong \fGC_{m,n}\otimes H(BG)_{E_{n-m}}.
% \end{multline*}

Next let us show that the maps $\iota, p_0, p_1$ are quasi-isomorphisms, which is a less straightforward statement.
First note that it suffices to show that $p_1$ is a quasi-isomorphism, because then (by 2-out-of-3) so is $\iota$, and then (by 2-out-of-3 again) so is $p_0$.

So let us check that $p_1$ is a quasi-isomorphism.
Consider a univalent type II vertex attached to a type I vertex as a "marking" of that type I vertex.
Now take a spectral sequence on the total number of edges plus the type II vertices, plus twice the type I vertices, disregarding the markings. (I.e., the univalent type II vertices and their attaching edges to a type I vertex don't contribute to the count).
Then the differential $\delta_0$ on the associated graded of $\alg g$ becomes
\[
\delta_0 : \Gamma \mapsto E_{n-m}^\epsilon \sum_v \Gamma \sqcup(\text{add marking at vertex $v$})
\]
where the sum is over all type I vertices, and $\epsilon=0$ if $\Gamma$ is in the summand $\fGCc_n$, and $\epsilon=1$ if $\Gamma$ is in the second summand $\GC_{m,n}'$ of \eqref{equ:splitting}. In words, we add a marking to one type I vertex, summing over all choices of such vertex.
Note also that in particular, if the graph $\Gamma$ above is in $\fGCc_n$, this operation sends it to a linear combination of graphs in $\GC_{m,n}'$.
Now consider the operation $h_0'$ by summing over all vertices and removing one marking (if one is present).
By a simple computation:
\[
(\delta_0 h_0' + h_0' \delta_0)(\Gamma) = (\text{\# of type I vertices}) E_{n-m}^\epsilon \Gamma.
\]
Hence the operation 
\[
h_0: \Gamma \mapsto
\begin{cases}
0 & \text{if $\Gamma$ has no type I vertices}\\
\frac 1 {(\text{\# of type I vertices})} E_{n-m}^{-\epsilon} h_0'(\Gamma) &\text{otherwise}
\end{cases}
\]
is a homotopy for $\delta_0$, in the sense that $\delta_0 h_0 + h_0\delta_0=\mathit{id}-\pi$, where $\pi$ is the projection onto the subspace spanned by graphs without type I vertices. That means that on the level of associated graded spaces the map $p_1$ induces an isomorphism on cohomology
%\footnote{Concretely, this isomorphism is multiplication by $E_{n-m}^{(\text{\# of type II vertices})}$.}
and hence the spectral sequence collapses here. 
\end{proof}

The following result is evident from the definitions.
\begin{lemma}\label{lem:MCelementspath}
The images of the Maurer-Cartan element $\hZ_{m,n}-\hZ_{m,n}^0$ under the maps $p_0$, $p_1$ of the preceding Proposition \ref{prop:pathobject} are as follows, where $L$ is the generator of the loop order grading.
\begin{align*}
p_0(\hZ_{m,n}-\hZ_{m,n}^0) &= Z_{m,n}^n \\
p_1(\hZ_{m,n}-\hZ_{m,n}^0) &= L^{E_{n-m}} Z_{m}^m 
\end{align*}
\hfill\qed
\end{lemma}

\begin{cor}\label{cor:gauge equiv GC2}
$Z_{m,n}^n$ and $L^{E_{n-m}} Z_{m}^m$ are gauge equivalent Maurer-Cartan elements in $\GC_m^2\hotimes H(BG)_{E_{n-m}}$.
\end{cor}
For the proof we denote by $\mF^\bullet$ the descending complete filtration by loop order on our graph complexes, that is, $\mF^p\GC_{m,n}\subset \GC_{m,n}$ is generated by graphs of loop orders $\geq p$.
We note that the morphisms $\iota, p_0, p_1$ of Proposition \ref{prop:pathobject} all respect the loop order grading.
\begin{proof}
We claim that the morphisms $\iota, p_0, p_1$ induce bijections between the gauge equivalence classes of Maurer-Cartan elements 
\[
\begin{tikzcd}
  \pi_0 \MC(\mF^1\fGCc_{n}\hotimes H(BG)_{E_{n-m}}) 
\ar{r}{\iota}
&
\pi_0 \MC(\mF^1\GC_{m,n}\hotimes H(BG)_{E_{n-m}}) 
\ar[shift left]{r}{p_0}
\ar[shift right]{r}[swap]{p_1}
&
\pi_0 \MC(\mF^1\fGCc_{n}\hotimes H(BG)_{E_{n-m}}).
\end{tikzcd}
\] 
This claim follows immediately from the Goldman-Millson Theorem \cite{DolRog} and the quasi-isomorphism statement of Proposition \ref{prop:pathobject}. Just note that the quasi-isomorphism statement automatically holds on each subcomplex of fixed loop order separately, since the differential does not change the loop order.
Let $[\alpha]$ denote the gauge equivalence class of a Maurer-Cartan element $\alpha$. Then
\[
p_0([\hZ_{m,n}-\hZ_{m,n}^0])
=[Z_{m,n}^n]
= p_0(\iota([Z_{m,n}^n]))
\]
The claim above implies that 
\[
  [\hZ_{m,n}-\hZ_{m,n}^0] = \iota([Z_{m,n}^n]).
\]
Similarly, we have 
\[
p_1([\hZ_{m,n}-\hZ_{m,n}^0])
=[L^{E_{n-m}} Z_{m}^m]
= p_1(\iota([L^{E_{n-m}} Z_{m}^m]))
\]
and hence
\[
  [\hZ_{m,n}-\hZ_{m,n}^0] = \iota([L^{E_{n-m}} Z_{m}^m]).
\]
Thus we have 
\[
  \iota([Z_{m,n}^n]) = \iota([L^{E_{n-m}} Z_{m}^m])
\]
and using our claim from the beginning once again this implies that
\[
  [Z_{m,n}^n] = [L^{E_{n-m}} Z_{m}^m].
\]
In other words we have shown that $Z_{m,n}^n$ and $L^{E_{n-m}} Z_{m}^m$ are gauge equivalent in $\mF^1\fGCc_{n}\hotimes H(BG)_{E_{n-m}}$. On the other hand, the inclusion 
\[
\GC^2_n \to \mF^1\fGCc_{n}
\]
is a quasi-isomorphism in each loop order, and hence by the Goldman-Millson Theorem we have that 
\[
  \pi_0\MC(\GC^2_n) \cong  \pi_0\MC(\mF^1\fGCc_{n}),
\]
so that $Z_{m,n}^n$ and $L^{E_{n-m}} Z_{m}^m$ are also gauge equivalent in $\GC^2_{n}\hotimes H(BG)_{E_{n-m}}$ as desired.
\end{proof}

\begin{cor}\label{cor:hZprime}
There is a Maurer-Cartan element $\hZ_{m,n}'\in (\mF^2\GC_{m,n}\hotimes H(BG))^{\hZ_{m,n}^0+\hZ_{m,n}^1}$ such that  
\begin{equation}\label{equ:hZp proj}
  \begin{aligned}
    p_0(\hZ_{m,n}'+\hZ_{m,n}^1) &= Z_{m,n}^n \\
    p_1(\hZ_{m,n}'+\hZ_{m,n}^1) &= L^{E_{n-m}} Z_{m}^m 
    \end{aligned}.
  \end{equation}
\end{cor}
\begin{proof}
For $m$ odd we may simply take $\hZ_{m,n}'=\hZ_{m,n}-\hZ_{m,n}^0$, which is concentrated in loop orders $\geq 2$ by Corollary \ref{cor:hZ only 2l}.

For $m$ even we have that $\hZ_{m,n}-\hZ_{m,n}^0$ has a non-trivial part of loop order $1$, and we can also not show that $X:=\hZ_{m,n}-\hZ_{m,n}^0-\hZ_{m,n}^1$ is concentrated in loop orders $\geq 2$. Let $X_1$ be the part of $X$ of loop order 1. Then we have that $p_0(X)=p_1(X)=0$ and $dX_1=0$, where $d$ denotes the (twisted) differential on $\GC_{m,n}^{\hZ_{m,n}^0}$.

Let 
$$
V=\{x\in \GC_{m,n}\hotimes H(BG)_{E_{n-m}}\mid p_0(x)=p_1(x)=0, \text{$x$ of loop order 1} \}
$$  
be the joint kernel of $p_0$ and $p_1$ in the loop order 1 subspace of $\GC_{m,n}\hotimes H(BG)_{E_{n-m}}$, equipped with the differential $d$. Since $p_0$ and $p_1$ are quasi-isomorphisms such that $p_0\oplus p_1$ is surjective, we have that the cohomology of $V$ is identified with that of $\fGCc_n^{1\text{-loop}}\hotimes H(BG)_{E_{n-m}}$ up to a degree shift,
\[
H^\bullet(V) = H^{\bullet-1}(\fGCc^{1\text{-loop}}\hotimes H(BG)_{E_{n-m}} )
=
\bigoplus_{k\equiv 1\text{ mod }4}
\R[n-k-1]\otimes H(BG)_{E_{n-m}}.
\]
In particular, the cohomology is concentrated in even degrees. But now $X_1\in V$ is an cocycle of odd degree and hence exact.
That is, there is some $Y\in V$, such that $X_1=dY$.
But then we may gauge transform $\hZ_{m,n}$ with the gauge transformation induced by $Y$ to obtain the desired Maurer-Cartan element 
\[
e^Y\cdot \hZ_{m,n} =:  \hZ_{m,n}^0+\hZ_{m,n}^1+\hZ_{m,n}',
\]
with $\hZ_{m,n}'$ concentrated in loop orders $\geq 2$. (Here $e^Y\cdot$ denotes the gauge transformation corresponding to $Y$.)
Furthermore, since $p_0(Y)=p_1(Y)=0$ we have for $j\in\{1,2\}$
\[
 p_j( e^Y\cdot \hZ_{m,n})
 =
 e^{p_j(Y)}\cdot p_j(\hZ_{m,n})
 =
 p_j(\hZ_{m,n}),
\]
so that \eqref{equ:hZp proj} is evident.
\end{proof}

Now we can show Theorem \ref{thm:mainloc}.

\begin{proof}[Proof of Theorem \ref{thm:mainloc}]
It suffices to show the statement for $l=n-k$.
We may proceed along the lines of the proof of Corollary \ref{cor:gauge equiv GC2}, just replacing $\mF^1(-)$ by $\mF^2(-)$.
Concretely, we have the diagram 
\[
\begin{tikzcd}
  \pi_0 \MC(\mF^2\fGCc_{n}\hotimes H(BG)_{E_{n-m}}^\alpha) 
\ar{r}{\iota}
&
\pi_0 \MC(\mF^2\GC_{m,n}\hotimes H(BG)_{E_{n-m}}^{\beta}) 
\ar[shift left]{r}{p_0}
\ar[shift right]{r}[swap]{p_1}
&
\pi_0 \MC(\mF^2\fGCc_{n}\hotimes H(BG)_{E_{n-m}}^\alpha).
\end{tikzcd}
\] 
with $\alpha=-E_{n-m}E_m\tadpole$ and $\beta=\hZ_{m,n}^0+\hZ_{m,n}^1$.
It again follows from Proposition \ref{prop:pathobject} and the Goldman-Millson Theorem that all three arrows $\iota,p_0,p_1$ in the diagram are bijections.
Now, proceeding as in the proof of Corollary \ref{cor:gauge equiv GC2}, just with the MC element $\hZ_{m,n}'$ of Corollary \ref{cor:hZprime}, we conclude that $Z_{m,n}^n-\alpha$ and $L^{E_{n-m}} Z_{m}^m-\alpha$ are gauge equivalent Maurer-Cartan elements of $\mF^2\fGCc_{n}\hotimes H(BG)_{E_{n-m}}^\alpha$.
But now the inclusion 
\[
\GC_{n}\to \mF^2\fGCc_{n}
\]
is a quasi-isomorphism in each loop order separately. Hence by the Goldman-Millson Theorem we have that 
\[
  \pi_0\MC(\GC_{n} \hotimes H(BG)^\alpha)
  \cong \pi_0\MC(\fGCc_{n}\hotimes H(BG)^\alpha).
\]
Hence $Z_{m,n}^n-\alpha$ and $L^{E_{n-m}} Z_{m}^m-\alpha$ are also gauge equivalent in $\GC_{n} \hotimes H(BG)^\alpha$. But this is equivalent to $Z_{m,n}^n$ and $L^{E_{n-m}} Z_{m}^m$ being gauge equivalent in $\GC_n^+$. 

% By definition, two Maurer-Cartan elements $x,y$ in a dg Lie algebra $\alg h$ are gauge equivalent if there is a path object $\alg g$ together with a MC element $z\in \alg g$ such that $p_0(z)=x$ and $p_1(z)=y$, where $p_0,p_1:\alg g\to \alg h$ are the two maps in the definition of path object.
% Typically one takes $\alg g=\alg h[t,dt]$, but any other path object is fine, see Appendix \ref{app:pathobjects}.

% In our case we take the path object of Proposition \ref{prop:pathobject} (with $m=k$).
% Then Lemma \ref{lem:MCelementspath} says that the MC elements are gauge equivalent in $\fGCc_n\otimes H(BG)$.
% Finally, since $\fGCc_n$ and $\GC_n^{2}$ are quasi-isomorphic, one concludes the result.
% %(TODO: Here one should be a little more careful... the cleanest is to restrict the valence of vertices in $\GC_{m,n}$ from the start.) 
\end{proof}

\begin{rem}
We remark that gauge equivalence in $\GC_n^2$ as asserted in Corollary \ref{cor:gauge equiv GC2}, instead of gauge equivalence in $\GC_n^+$ as asserted by Theorem \ref{thm:mainloc}, would be sufficient to derive the results of section \ref{sec:MC}, and hence the main results of this paper. 
However, the smaller dg Lie algebra $\GC_n^+$ is more convenient to work with, since it is finite dimensional in every loop order, and since it acts on Kontsevich's model $\Graphs_n$ for the little disks operad.
\end{rem}

\subsection{Proof of Theorem \ref{thm:main equiv}}

We may now connect all threads to show Theorem \ref{thm:main equiv}.

Let $(G,\POp)\in \sGrp\sOp$ be a simplicial model for the pair $(\SO(n),\FM_n)$, for example $(G,\POp)=(\sS\SO(n),\sS\FM_n)$.
We desire to establish a zigzag of weak equivalences in $\dgca_*(\dgca^{/})^{\cT^{op}}$
\[
    (\Omega(\bar WG), \Omega(N\POp\sslash G))
    \to \bullet \leftarrow
    (H(B\SO(n)), N\BGraphs_n^{m_n}).
\]
This will be done in several steps.

From Lemma \ref{lem:Om PA zigzag} we find that there is a zigzag of weak equivalences (quasi-isomorphisms) in $\dgca_*(\dgca^{/})^{\cT^{op}}$
\[
    (\Omega(\bar WG), \Omega(N\POp\sslash G))
    \to \bullet \leftarrow
    (\Omega^{PA}_{\SO(n)}(*), \Omega^{PA}_{\SO(n)}(N\FM_n)).
\]
By Theorem \ref{thm:equivariant model} we have a further weak equivalence 
\[
    (H(B\SO(n)), \BGraphs_n^{\tm^n})
    \to
    (\Omega^{PA}_{\SO(n)}(*), \Omega^{PA}_{\SO(n)}(N\FM_n)),
\]
using the Maurer-Cartan element $\tm^n\in \left(\GC_n^+\hotimes H(B\SO(n))\right)^{\mathbb Z_2}$ defined via configuration space integrals in section \ref{sec:MC int formula}.

Now, by Theorem \ref{conjthm:main} we know that 
$\tm^n$ is gauge equivalent to the Maurer-Cartan element $m_n=Z^n_{conj}$ of \eqref{equ:mn intro}.
That is, there exists a degree zero element $\phi\in(\GC_n^+\hotimes H(B\SO(n)))^0$ such that 
\[
\exp(\phi)\cdot m = m_n, 
\]
where $\cdot$ denotes the gauge action.
But due to the action of $\GC_n^+\hotimes H(B\SO(n))$ on $\BGraphs_n$ this implies that we have an isomorphism of dg Hopf cooperads under $H(B\SO(n))$
\begin{gather*}
\exp(\phi) : \BGraphs_n^{\tm^n} \to \BGraphs_n^{m_n} \\
\Gamma \mapsto \exp(\phi)\cdot \Gamma
= \sum_{j\geq 0}\frac{1}{j!}
(\phi\cdot)^j \Gamma,
\end{gather*}
where $\cdot$ on the right-hand side now denotes the action.
Note that the series has only finitely many terms for each $\Gamma$, since each application of $\phi\cdot$ strictly reduces the number of edges.
This means that we have an isomorphism of pairs in $\dgca_*(\dgca^{/})^{\cT^{op}}$ 
\[
    (H(B\SO(n)), \BGraphs_n^{\tm^n})
    \xrightarrow{(\mathit{id}, \exp(\phi))}
    (H(B\SO(n)), \BGraphs_n^{m_n})
\] 
so that Theorem \ref{thm:main equiv} follows.

\hfill\qed 

\begin{rem}
In the earlier version of this manuscript the Maurer-Cartan element $m_n$ for even $n$ was $E\tadpole$ instead of $-E\tadpole$, due to different conventions for the Cartan model.
However, the sign and in fact prefactor of the term $E\tadpole$ is indeed irrelevant, as can be seen as follows. For any number $\lambda\neq 0$ consider the isomorphism 
\[
\phi_\lambda: \Graphs_n \to \Graphs_n
\]
that sends a graph $\Gamma$ with $e$ edges and $v$ internal vertices to $\phi_\lambda(\Gamma)=\lambda^{e-v} \Gamma$.
Then this isomorphism intertwines the action of a graph $\gamma\in \GC_n^+$ of loop order $\ell$ with the action of $\lambda^\ell \gamma$, i.e., 
\[
 \phi_\lambda(\gamma\cdot\Gamma)
=
\lambda^\ell \gamma\cdot \phi_\lambda(\Gamma).
\]
In particular, for even $n$ we have that $\phi_{-1}$ induces isomorphisms 
\begin{align*}
\BGraphs_n^{m_n} &\cong \BGraphs_n^{-m_n}
&
\Graphs_n \rtimes_{m_n} \mU\fg_n^c \cong  \Graphs_n \rtimes_{-m_n} \mU\fg_n^c.
\end{align*}
\end{rem}

\appendix

\section{Vanishing Lemmas for graphs with (certain) bivalent and univalent vertices}\label{app:vanishing}
The goal here is to show that the integral weights of graphs involving bivalent vertices of several types vanish. This can be done by using the standard reflection argument due to Kontsevich.
\begin{lemma}\label{lem:bivalentvanish}
The form associated to the following graph under the map \eqref{equ:Kintegralequiv} vanishes:
\[
\begin{tikzpicture}
\node[int](v) at (0,0.2){};
\node[ext](v1) at (-0.5,0){1};
\node[ext](v2) at (0.5,0.5){2};
\draw (v) edge (v1) edge (v2);
\end{tikzpicture}
\xrightarrow{\omega} 0.
\]
\end{lemma}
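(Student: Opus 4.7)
By the Feynman rule \eqref{equ:Kintegralequiv}, the form in question equals
\[
\omega_\Gamma \;=\; \int_f \pi_{1i}^*\Omega \wedge \pi_{i2}^*\Omega,
\]
where the integral is along the fiber of the forgetful SA bundle $\FM_n(3)\sslash K\to \FM_n(2)\sslash K$ that forgets the internal vertex $i$. The plan is to apply the Kontsevich-type reflection trick fiberwise.

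First, fix a configuration $(p_1,p_2)$ on $\FM_n(2)$. The corresponding fiber is (a semi-algebraic compactification of) $\R^n\setminus\{p_1,p_2\}$, parametrizing the position of the internal point $i$. Consider the fiberwise involution $\sigma:i\mapsto p_1+p_2-i$, i.e., reflection through the midpoint of the segment $\overline{p_1 p_2}$. A direct calculation shows that
\[
\pi_{1i}\circ\sigma = \pi_{i2},\qquad \pi_{i2}\circ\sigma = \pi_{1i},
\]
so that
\[
\sigma^*\bigl(\pi_{1i}^*\Omega\wedge \pi_{i2}^*\Omega\bigr)
=\pi_{i2}^*\Omega\wedge \pi_{1i}^*\Omega
=(-1)^{(n-1)^2}\,\pi_{1i}^*\Omega\wedge \pi_{i2}^*\Omega.
\]
Since the Jacobian of $\sigma$ on the fiber is $(-1)^n$, the pushforward satisfies $(\pi)_*\sigma^*\alpha=(-1)^n (\pi)_*\alpha$, so combining both signs yields a global factor $(-1)^{n+(n-1)^2}=(-1)^{n^2-n+1}=-1$. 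Hence $\omega_\Gamma=-\omega_\Gamma$, forcing $\omega_\Gamma=0$.

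To make the argument rigorous in the $K$-equivariant PA setting, I would note that $\sigma$ is $K$-equivariant (since $K$ acts linearly on $\R^n$ and preserves midpoints) and covers the identity on $\FM_n(2)\sslash K$; hence it descends to a fiberwise involution of $\FM_n(3)\sslash K\to \FM_n(2)\sslash K$. The integrand is built from pullbacks of the propagator $\Omega=\Phi(\Omega_{sm})$, whose naturality together with the change-of-variables formula for fiber integrals of PA forms (\cite{HLTV}) carries the sign computation above over verbatim.

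The only mildly delicate point is verifying that $\sigma$ extends to the boundary strata of the SA compactification (corresponding to $i$ colliding with $p_1$, $p_2$, or escaping to infinity), so that the change-of-variables formula is valid on the closed fiber. This is routine since $\sigma$ permutes the two ``collision strata'' at $p_1$ and $p_2$ and fixes the stratum at infinity, and all boundary strata are modelled on smooth (equivariant) blow-ups on which $\sigma$ acts semi-algebraically.
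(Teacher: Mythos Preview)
Your argument is correct and is essentially the same as the paper's: both apply Kontsevich's midpoint reflection trick to the fiber integral and obtain a global sign $-1$. The only cosmetic difference is that you orient the two edges as $(1\to i)$ and $(i\to 2)$ so that $\sigma$ swaps the pullbacks directly, whereas the paper orients both edges outward from the external vertices and therefore passes through the antipodal relation $\alpha_{ij}=(-1)^n\alpha_{ji}$ before reaching the same sign.
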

\begin{proof}
We can use the argument of \cite[Lemma 2.2]{KFeynman}, which we reproduce here for completeness, and to verify that it also works in the equivariant setting.
Number the black point as 3, and denote by $\alpha_{ij}$ the propagator between points $i$ and $j$, where $i,j\in \{1,2,3\}$.
The form above is then obtained as a fiber integral, integrating out point 3,
\[
 \int_3 \alpha_{13}\alpha_{23}.
\]
Note that we have chosen our propagator (anti-)invariantly under the inversion, hence $\alpha_{ij}=(-1)^n\alpha_{ji}$.
Now apply an inversion through the midpoint between points 1 and 2 to the integration variable, i.e., reflect the position of point 3 at that midpoint.
As is quickly verified using assertion (1) of Lemma \ref{lem:prop properties}, this change of variables transforms our integral into
\[
 (-1)^n\int_3 \alpha_{32}\alpha_{31}=(-1)^n\int_3 \alpha_{23}\alpha_{13}=-\int_3 \alpha_{13}\alpha_{23}.
\]
In the last equality we have used that the two forms are of degree $n-1$. It follows that the integral equals minus itself and is hence zero.
\end{proof}

Similarly, one shows the following:

\begin{lemma}
The form associated to the following graph (see Section \ref{sec:two v graphs}) vanishes:
\[
\begin{tikzpicture}
\draw (-1,0)--(1,0);
\node[int](v) at (0,0){};
\node[ext](v1) at (-0.5,0){1};
\node[ext](v2) at (0.5,0){2};
\draw (v) edge[bend right] (v1) edge[bend left] (v2);
\end{tikzpicture}
\to 0.
\]
\end{lemma}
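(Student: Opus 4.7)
The approach is to adapt the reflection argument of Lemma \ref{lem:bivalentvanish} to the baseline setting. Label the two external (type II) vertices $1,2$ and the internal (type II) vertex $3$. Since the internal vertex is drawn on the baseline it lives in $\R^m\subset \R^n$, so the form in question is the fiber integral
\[
\omega \;=\; \int_f \pi_{13}^*\Omega\wedge \pi_{23}^*\Omega,
\]
where the fiber parametrizes the position of point $3$ in $\R^m$ (of dimension $m$), and $\Omega$ is the equivariant propagator from section \ref{sec:propagator}.

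The key step is to apply the fiberwise diffeomorphism $r\colon x_3\mapsto x_1+x_2-x_3$ (the point reflection at the midpoint of $1$ and $2$, carried out inside $\R^m$). A direct inspection gives $\pi_{13}\circ r = \pi_{32}$ and $\pi_{23}\circ r = \pi_{31}$. Combining this with the reflection-(anti)invariance $i^*\Omega = (-1)^n\Omega$ of the propagator, one computes
\[
r^*\!\bigl(\pi_{13}^*\Omega\wedge \pi_{23}^*\Omega\bigr)
 \;=\; (-1)^{2n}\,\pi_{23}^*\Omega\wedge \pi_{13}^*\Omega
 \;=\; (-1)^{n-1}\,\pi_{13}^*\Omega\wedge \pi_{23}^*\Omega,
\]
the last equality using that each $\pi_{ij}^*\Omega$ is of degree $n-1$. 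On the other hand $r$ is a point reflection on the $m$-dimensional fiber, so it contributes an orientation factor $(-1)^m$ to the fiber integral. Comparing both sides yields
\[
\omega \;=\; (-1)^{m}(-1)^{n-1}\,\omega \;=\; (-1)^{n+m-1}\,\omega .
\]
Since the entire discussion of section \ref{sec:auxthmproof} is carried out under the standing hypothesis that $n-m$ is even, the exponent $n+m-1$ is odd, whence $\omega=-\omega$ and hence $\omega=0$.

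The only mildly delicate point is to justify that the reflection is an admissible change of variables for the PA fiber integral (the full semi-algebraic setup of \cite{HLTV} must be invoked to apply Fubini/change of variables on the relevant SA bundle), but no genuinely new ideas are needed beyond those already used for Lemma \ref{lem:bivalentvanish}. The argument would break down for odd $n-m$, consistent with the statement being used only in the even $n-m$ regime of section \ref{sec:auxthmproof}.
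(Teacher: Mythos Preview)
Your argument is correct. You redo the Kontsevich reflection trick directly, keeping track of the fact that the fiber is $m$-dimensional while the propagator retains total degree $n-1$; the resulting sign $(-1)^{m+n-1}$ is indeed $-1$ precisely under the standing hypothesis $n-m$ even.

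The paper takes a shorter route: it uses the restriction property of the equivariant propagator (the remark in Appendix~\ref{sec:explicitpropagator} that $\Omega_{sm}^{\,n\text{-dim}}|_{S^{m-1}} = (\text{const})\,E_{n-m}\,\Omega_{sm}^{\,m\text{-dim}}$). Since all three vertices lie on the baseline $\R^m$, each edge is assigned $E_{n-m}$ times the $m$-dimensional propagator, and the vanishing then follows directly from Lemma~\ref{lem:bivalentvanish} with $n$ replaced by $m$ (the paper writes ``$n-m$'' here, which appears to be a slip). Your approach has the virtue of making the role of the parity of $n-m$ completely explicit and avoids appealing to the restriction formula, at the cost of redoing the sign bookkeeping. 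The paper's approach is quicker once the restriction behavior is known.
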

\begin{proof}
The edge between the two type II vertices is assigned the form 
\[
E_{n-m} \Omega_{sm}^{n-m\text{-dim}},
\]
see Remark \ref{rem:prop restrict}, where $\Omega_{sm}^{n-m\text{-dim}}$ is the $m$-dimensional propagator. Hence applying the previous Lemma (with $n$ replaced by $n-m$) gives the result.
\end{proof}

%\begin{lemma}
%The following form vanishes:
%\[
%\begin{tikzpicture}
%\draw (-.5,0) -- (.5,0);
%\coordinate(w) at (0,0);
%\node[int](v) at (0,0.5){};
%\node[ext](v2) at (.5,1){1};
%\draw (v) edge[dashed] (w) edge (v2);
%\end{tikzpicture}
%=0
%\]
%\end{lemma}
%\begin{proof}
%Choosing invariant propagators, the form is an $O(n-m)$-anti-invariant equivariant $n-m-2$-form on $S^{n-m-1}$. 
%\end{proof}
% 
% \begin{lemma}
% The following forms vanish:
% \begin{align*}
% &
% \begin{tikzpicture}
% \node[int](v) at (0,0.5){};
% \node[ext](v2) at (.5,1){1};
% \draw (v) edge (v2);
% \end{tikzpicture}
% &&
% \begin{tikzpicture}
% \draw (-.5,0) -- (.5,0);
% \node[int](v) at (0,0.5){};
% \node[ext](v2) at (.5,1){1};
% \draw (v) edge (v2);
% \end{tikzpicture}
% \end{align*}
% \end{lemma}
% \begin{proof}
% This is purely by degree reasons, the form would have degree $-1$.
% \end{proof}

\begin{lemma}
The weights of all graphs containing univalent internal vertices vanish, except for the graphs occurring in $\hZ^0_{m,n}$ in \eqref{equ:hZmn} above, and except for one-valent type II vertices that may be attached to type I vertices:
\[
\begin{tikzpicture}
\draw (-.5,0) -- (.5,0);
\node[int](w) at (0,0){};
\node[ext](v2) at (0,.5){1};
\draw (v2) edge (w);
\end{tikzpicture}
\]
\end{lemma}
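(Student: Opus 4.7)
The plan is to reproduce Kontsevich's classical radial-vanishing argument for univalent vertices, suitably adapted to the bi-colored equivariant setting. Fix a graph $\Gamma$ contributing to $\hZ_{m,n}$ with a distinguished univalent vertex $v$ connected by its unique edge $e$ to a vertex $w$, and suppose $\Gamma$ has at least one further vertex (so that, after ``anchoring'' $w$ and the rest, the position of $v$ lies in a positive-dimensional fiber in the configuration space). Since $v$ is univalent, the only factor of the integrand that depends on the position of $v$ is the single propagator $\pi_{vw}^*\Omega$; I would isolate this position as an innermost fiber integration and exhibit a radial direction centered at $w$ in which the integrand is trivial.

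If $v$ is of type I, its position varies in $\R^n$ relative to $w$, and polar coordinates $v = w + r\theta$ with $r > 0$, $\theta\in S^{n-1}$ identify the fiber with $\R_{>0}\times S^{n-1}$. Since $\pi_{vw}^*\Omega$ depends only on $\theta$, the pulled-back form has no $dr$-component, and integration over $r$ annihilates it. If $v$ is of type II and $w$ is also of type II, both lie on the baseline $\R^m$ and the same polar decomposition works with $\theta \in S^{m-1}\subset S^{n-1}$, again killing the weight.

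The one remaining case -- $v$ univalent of type II attached to a type I vertex $w$ -- genuinely escapes this argument: as $v$ ranges over $\R^m$, the difference $v-w$ sweeps out the affine subspace $\R^m\times\{-w_\perp\}\subset \R^n$ which, for generic transverse component $w_\perp\neq 0$, does not pass through the origin. No clean radial direction centered at $w$ is then available, the propagator picks up both angular and radial-like contributions, and the weight can be nonzero. This is precisely the decorative case that must be kept as an exception.

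Finally, the three graphs of $\hZ^0_{m,n}$ evade the argument at the outset because each has only two vertices: after removing one of them, nothing remains (the space $\FM_n(1)$ is a point), so there is no ambient configuration to anchor a radial decomposition against. Their weights are the normalization constants ($1$, $1$, and $E_{n-m}$) coming directly from the defining properties of the propagator, in particular from $d_u\Omega = E_{n-m}$ for the baseline edge. The main technical point, relatively routine here, is to verify that the radial decomposition is compatible with the PA-bundle pushforward formalism of \cite{HLTV}, which is immediate from the fact that $\Omega = \Phi(\Omega_{sm})$ is pulled back from a form on $S^{n-1}$ via the maps $\pi_{ij}$.
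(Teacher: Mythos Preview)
Your argument is correct and is essentially the same as the paper's, just presented more concretely: the paper phrases the vanishing as a pure form-degree count (the fiber over the univalent vertex is $n$- or $m$-dimensional while the single propagator, being pulled back from an $(n-1)$- or $(m-1)$-sphere, carries at most $n-1$ or $m-1$ form degrees along that fiber), whereas you make this explicit by introducing polar coordinates around $w$ and observing there is no $dr$-component. Your identification of the two excepted cases matches the paper exactly.
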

\begin{proof}
Consider a graph with a univalent vertex $v$. We distinguish several cases.
First, suppose $v$ is of type I, and the graph has at least 2 other vertices.
The vanishing of the configuration space integral is purely due to degree reasons: Consider the points in the configuration other than that (say $x$) corresponding to $v$ fixed.
Then $x$ traces out an $n$-dimensional space, but there are at most $n-1$ form degrees along $x$, hence the integral is zero.
The same argument works for the case that there is one other type I vertex (and the graph is of the type with a baseline).
This settles the case of $v$ of type I.

Next suppose $v$ is of type II, with the single edge connecting it to another type II vertex.
If there is at least one more vertex in the graph, the integral vanishes by analogous reasoning as before, just in lower dimension.
If not, we have a graph occurring in \eqref{equ:hZmn}.
\end{proof}

\bibliographystyle{plain}

\begin{thebibliography}{1}

  \bibitem{AdamekRosicky}
  J. Adamek and J. Rosicky.
  \newblock Locally presentable and accessible categories.
  \newblock {\em London Math. Soc. Lecture Note Ser.}, 189,
Cambridge University Press, Cambridge, 1994. xiv+316 pp.
ISBN:0-521-42261-2.

\bibitem{AF}
David Ayala, John Francis.
\newblock Factorization homology of topological manifolds.
\newblock {\em J. Topol.} 8 (2015), no. 4, 1045--1084.

\bibitem{Badzioch} 
B. Badzioch.
\newblock Algebraic theories in homotopy theory.
\newblock {\em Ann. of Math.} (2) 155 (2002)
895--913.

\bibitem{BergerHuebschmann}
Clemens Berger and Johannes Huebschmann.
\newblock Comparison of the geometric bar and W-constructions.
\newblock {\em Journal of Pure and Applied Algebra},
Volume 131, Issue 2,
1998,
Pages 109--123,
ISSN 0022-4049,
\url{https://doi.org/10.1016/S0022-4049(98)00020-6}.

\bibitem{BM}
Clemens Berger and Ieke Moerdijk.
\newblock The Boardman-Vogt resolution of operads in monoidal model categories.
\newblock {\em Topology}, 45:807--849, 2006.

\bibitem{BMAxiomatic}
Clemens Berger and Ieke Moerdijk.
\newblock Axiomatic homotopy theory for operads,
\newblock \emph{Comment. Math. Helv.} {\bf 78}, 4 (2003) 805--831;
arXiv:math/0206094.

\bibitem{BMColored}
Clemens Berger and Ieke Moerdijk.
\newblock Resolution of coloured operads and rectification of homotopy algebras.
\newblock {\em Categories in algebra, geometry and mathematical physics}, 31--58.
Contemp. Math., 431, American Mathematical Society, Providence, RI, 2007.

\bibitem{BerglundMC}
Alexander Berglund.
\newblock On exponential groups and Maurer-Cartan spaces.
\newblock
{\em Proc. Amer. Math. Soc. Ser. B}  11 (2024), 358--370.

\bibitem{Bergner}
Julia E. Bergner.
\newblock Rigidification of algebras over multi-sorted theories.
\newblock {\em Algebr. Geom. Topol.} 6(2006), 1925--1955.

\bibitem{BergnerEquivariant}
Julia E. Bergner and Philip Hackney.
\newblock Group actions on Segal operads.
\newblock {\em Israel J. Math.} 202 (2014), no. 1, 423--460. 

\bibitem{BoVo}
Michael Boardman and Rainer Vogt.
\newblock Homotopy invariant algebraic structures on topological spaces.
\newblock {\em Lect. Notes Math.} 347 (1973).

\bibitem{BoavidaHorel}
P. Boavida de Brito and G. Horel. 
\newblock On the formality of the little disks operad in positive characteristic.
\newblock {\em J. Lond. Math. Soc.} 104.2 (2021), pp. 634--667.
%doi: 10.1112/jlms.12442. arXiv: 1903.09191.

\bibitem{BoavidaCiriciHorel}
P. Boavida de Brito, J. Cirici, and G. Horel.
\newblock Equivariant formality of the little disks operad. Preprint, arXiv: 2501.03726, 2025.

\bibitem{BonventrePereira}
Peter Bonventre and  Luís A. Pereira.
\newblock Equivariant dendroidal Segal spaces and G-$\infty$-operads.(English summary)
\newblock {\em Algebr. Geom. Topol.} 20(2020), no.6, 2687--2778.

\bibitem{BrionEquivariant}
Michel Brion.
\newblock Equivariant cohomology and equivariant intersection theory.
\newblock {\em Representation theories and algebraic geometry} (Montreal, PQ, 1997), 1--37.
Notes by Alvaro Rittatore,
NATO Adv. Sci. Inst. Ser. C: Math. Phys. Sci., 514
Kluwer Academic Publishers Group, Dordrecht, 1998
ISBN:0-7923-5193-2.

\bibitem{CegarraRemedios}
A.M. Cegarra, Josué Remedios.
\newblock The relationship between the diagonal and the bar constructions on a bisimplicial set.
\newblock {\em Topology and its Applications},
Volume 153, Issue 1,
2005,
Pages 21--51,
ISSN 0166-8641,
\url{https://doi.org/10.1016/j.topol.2004.12.003}.

\bibitem{CisinskiMoerdijk}
Denis-Charles Cisinski and Ieke Moerdijk.
\newblock Dendroidal Segal spaces and $\infty$-operads.
\newblock {\em J. Topol.} 6(2013), no.3, 675--704.

\bibitem{Cohen} F. R. Cohen, The homology of $C_{n+1}$-spaces, $n \geq 0$,
The homology of iterated loop spaces, Springer-Verlag, 1976,
Lecture Notes Math., {\bf 533}, 207--351.

\bibitem{CGV}
James Conant, Ferenc Gerlits and Karen Vogtmann.
\newblock Cut vertices in commutative graphs.
\newblock {\em Q. J. Math.} 56(3):321--336 (2005).


\bibitem{DolRog}
Vasily Dolgushev and Christopher L. Rogers.
\newblock {A version of the {G}oldman-{M}illson theorem for filtered {$L_{\infty}$}-algebras}.
\newblock
  \emph{J. Algebra}
   430, 260--302, 2015.

\bibitem{DolWill}
Vasily Dolgushev and Thomas Willwacher.
\newblock {Operadic twisting - with an application to Deligne's conjecture}.
\newblock {\em Journal of Pure and Applied Algebra}, 219(5):1349--1428, 2015.

\bibitem{DDK}
E. Dror, W. G. Dwyer and D. M. Kan.
\newblock Equivariant Maps Which are Self Homotopy Equivalences
\newblock
Proceedings of the American Mathematical Society, Vol. 80, No. 4
(1980), pp. 670--672.

\bibitem{DCV}
 Gabriel C. Drummond-Cole, Bruno Vallette.
\newblock The minimal model for the Batalin-Vilkovisky operad.
\newblock Selecta Mathematica {\bf 19}, Issue 1 (2013), 1-47.

\bibitem{DK1}
William Dwyer and Daniel Kan.
\newblock Simplicial localizations of categories.
\newblock {\em J. Pure Appl. Algebra} 17 (1980), 267--284.

\bibitem{DK2}
William Dwyer and Daniel Kan.
\newblock Calculating simplicial localizations.
\newblock {\em J. Pure Appl. Algebra} 18 (1980), 17--35.

\bibitem{FHT}
Yves F\'elix, Stephen Halperin and Jean-Claude Thomas.
\newblock Rational homotopy theory. 
\newblock Graduate Texts in Mathematics, 205. Springer-Verlag, New York, 2001. xxxiv+535 pp. ISBN: 0-387-95068-0

\bibitem{FelixOpreaTanre}
Yves F\'elix, John Oprea and Daniel Tanr\'e.
\newblock Algebraic models in geometry.
\newblock Oxf. Grad. Texts Math., 17
Oxford University Press, Oxford, 2008. xxii+460 pp.
ISBN:978-0-19-920651-3

\bibitem{F}
Benoit Fresse.
\newblock Homotopy of Operads and Grothendieck--Teichm\"uller Groups: Parts 1 and 2.
\newblock Mathematical Surveys and Monographs,
Volume 217, 2017. ISBN: 978-1-4704-3480-9 


\bibitem{Frextended}
Benoit Fresse.
\newblock The extended rational homotopy theory of operads.
\newblock {\em Georgian Mathematical Journal}, Volume 25, Issue 4, Pages 493--512, DOI: https://doi.org/10.1515/gmj-2018-0061, 2018. 



\bibitem{FTW}
Benoit~Fresse, Victor~Turchin and Thomas Willwacher.
\newblock {The rational homotopy of mapping spaces of $E_n$ operads}.
\newblock arXiv:1703.06123.

\bibitem{FW}
Benoit~Fresse and Thomas Willwacher.
\newblock {The intrinsic formality of $E_n$-operads}.
\newblock {\em J. Eur. Math. Soc.} 22(2020), no.7, 2047--2133.

\bibitem{FWAut}
Benoit Fresse and Thomas Willwacher.
\newblock Mapping Spaces for DG Hopf Cooperads and Homotopy Automorphisms of the Rationalization of En-operads.
\newblock Preprint arXiv:2003.02939, 2020. 

\bibitem{Getzler0}
Ezra Getzler.
\newblock
{ Operads and moduli spaces of genus $0$ Riemann surfaces.}
\newblock
The moduli space of curves (Texel Island, 1994), 199--230,
Progr. Math., 129, Boston, MA, 1995.

\bibitem{GJ}
Ezra Getzler and J.~D.~S. Jones.
\newblock Operads, homotopy algebra and iterated integrals for double loop
  spaces, 1994.
\newblock arXiv:hep-th/9403055.

\bibitem{GS}
Jeffrey Giansiracusa and Paolo Salvatore.
\newblock Formality of the framed little 2-discs operad and semidirect
  products.
\newblock In {\em Homotopy theory of function spaces and related topics},
  volume 519 of {\em Contemp. Math.}, pages 115--121. Amer. Math. Soc.,
  Providence, RI, 2010.

 \bibitem{GoerssJardine}
Paul G. Goerss and John F. Jardine.
\newblock Simplicial homotopy theory.
\newblock Progress in Mathematics 174, Birkh\"auser Verlag, Basel, 1999, xvi+510,
%     ISBN 3-7643-6064-X
%  MRCLASS = {55U10 (18G55 55-01 55Pxx)},
% MRNUMBER = {1711612 (2001d:55012)},
% MRREVIEWER = {R. M. Vogt},
%      DOI = {10.1007/978-3-0348-8707-6},
%      URL = {http://dx.doi.org/10.1007/978-3-0348-8707-6},
% }

\bibitem{G}
Thomas G. Goodwillie.
\newblock Calculus II. Analytic functors.
\newblock {\em K-Theory} 5 (1991/92) no. 4, 295--332.

\bibitem{GW}
Thomas G. Goodwillie and Michael Weiss.
\newblock Embeddings from the point of view of immersion theory. {II}.
\newblock {\em Geom. Topol.} 3 (1999), 103--118.

\bibitem{GKM}
M. Goresky, R. Kottwitz and R. MacPherson.
\newblock Equivariant Cohomology, Koszul duality
and the localization theorem.
\newblock {\em Invent. Math.}
131
(1998), no. 1, 25--83.

\bibitem{HLTV}
Robert Hardt, Pascal Lambrechts, Victor Turchin, and Ismar Voli{\'c}.
\newblock Real homotopy theory of semi-algebraic sets.
\newblock {\em Algebr. Geom. Topol.}, 11(5):2477--2545, 2011.

\bibitem{HeutsMoerdijk}
Gijs Heutsand Ieke Moerdijk.
\newblock Simplicial and dendroidal homotopy theory.
\newblock Ergeb. Math. Grenzgeb. (3), 75[Results in Mathematics and Related Areas. 3rd Series. A Series of Modern Surveys in Mathematics]
Springer, Cham, 2022. xx+612 pp.

\bibitem{HessRHT}
Kathryn Hess.
\newblock Rational homotopy theory: a brief introduction.
\newblock Interactions between homotopy theory and algebra, 175--202, {\em Contemp. Math.}  436, Amer. Math. Soc., Providence, RI, 2007.

\bibitem{Hirschhorn} 
P. S. Hirschhorn. 
\newblock Model categories and their localization.
\newblock Math. Surveys Monogr., 99,
American Mathematical Society, Providence, RI, 2003. xvi+457 pp.
ISBN:0-8218-3279-4

\bibitem{Ho}
Geoffroy Horel.
\newblock Profinite completion of operads and the Grothendieck-Teichm\"uller group.
\newblock {\em Adv. Math.}  321 (2017), 326--390.

\bibitem{Hsiang}
Wu-yi Hsiang.
\newblock Cohomology theory of topological transformation groups.
\newblock {\em Ergeb. Math. Grenzgeb.}, Band 85 (Results in Mathematics and Related Areas)
Springer-Verlag, New York-Heidelberg, 1975. x+164 pp.

\bibitem{KWarxiv}
Anton Khoroshkin and Thomas Willwacher.
\newblock Real models for the framed little $n$-disks operads.
\newblock Preprint arXiv:1705.08108, 2017.

% \bibitem{KWZ}
% Anton Khoroshkin, Thomas Willwacher and Marko \v Zivkovi\'c
% \newblock Differentials on graph complexes.
% \newblock preprint arXiv:1411.2369, to appear in Adv. Math.

    \bibitem{KWZ}
    Anton Khoroshkin, Thomas Willwacher and Marko \v Zivkovi\'c.
    \newblock Differentials on graph complexes.
    \newblock {\em Adv. Math.}, 307:1184--1214, 2017.


\bibitem{K1}
Maxim Kontsevich.
\newblock Deformation quantization of Poisson manifolds.
\newblock {\em Lett. Math. Phys.}, 66(3):157–216, 2003.

\bibitem{K2}
Maxim Kontsevich.
\newblock Operads and {M}otives in {D}eformation {Q}uantization.
\newblock {\em Lett. Math. Phys.}, 48:35--72, 1999.

\bibitem{Knoncomm}
Maxim Kontsevich.
\newblock Formal (non)commutative symplectic geometry.
\newblock Proceedings of the I. M. Gelfand seminar 1990-1992, 173--188, Birkhauser, 1993.

\bibitem{KFeynman}
Maxim Kontsevich.
\newblock Feynman diagrams and low-dimensional topology.
\newblock {\em Progr. Math.}, 120:97--121, 1994.
\newblock First European Congress of Mathematics, Vol. II, (Paris, 1992).

\bibitem{KS}
Maxim Kontsevich and Yan Soibelman.
\newblock Deformations of algebras over operads and the Deligne conjecture.
\newblock Conf\'erence Mosh\'e Flato 1999, Vol. I (Dijon), 255--307,
Math. Phys. Stud., 21, Kluwer Acad. Publ., Dordrecht, 2000. 

\bibitem{LV}
Pascal Lambrechts and Ismar Voli\'c.
\newblock  Formality of the little $N$-disks operad. 
\newblock {\em Mem. Amer. Math. Soc. } 
\newblock 230 (2014), no. 1079, viii+116 pp. ISBN: 978-0-8218-9212-1 .

\bibitem{Libine}
Matvei Libine.
\newblock Lecture Notes on Equivariant Cohomology.
\newblock arXiv:0709.3615.

\bibitem{LurieHTT}
Jacob Lurie.
\newblock  
Higher topos theory.
\newblock
{\em Ann. of Math. Stud.}, 170,
Princeton University Press, Princeton, NJ, 2009. xviii+925 pp.
ISBN:978-0-691-14049-0
ISBN:0-691-14049-9.

\bibitem{Meinrenken}
Eckhard Meinrenken.
\newblock Equivariant cohomology and the Cartan model.
\newblock In: {\em Encyclopedia of Mathematical Physics}, Pages 242--250 Academic Press 2006 (pdf, \url{doi:10.1016/B0-12-512666-2/00344-8})

\bibitem{MW}
Ieke Moerdijk and Ittay Weiss.
\newblock Dendroidal sets.
\newblock {\em Algebraic \& Geometric Topology} 7 (2007) 1441--1470.

\bibitem{Mo}
Syunji Moriya.
\newblock Non-formality of the odd dimensional framed little balls operads
\newblock Preprint, arXiv:1609.06800.

\bibitem{Petersen}
D. Petersen.
\newblock Minimal models, GT-action and formality of the little disk operad. 
\newblock {\em Selecta Math.} New ser. 20.3 (2014), pp. 817--822. 
%doi: 10.1007/ s00029-013-0135-5. arXiv: 1303.1448.

\bibitem{Riehl}
Emily Riehl.
\newblock Categorical homotopy theory.
\newblock New Mathematical Monographs, 24. Cambridge University Press, Cambridge, 2014. xviii+352 pp. ISBN: 978-1-107-04845-4

\bibitem{pavolfr}
Pavol {\v{S}}evera.
\newblock Formality of the chain operad of framed little disks.
\newblock {\em Lett. Math. Phys.}, 93(1):29--35, 2010.


\bibitem{SW}
Paolo Salvatore and Nathalie Wahl.
\newblock Framed discs operads and Batalin-Vilkovisky algebras.
\newblock Q. J. Math. 54 (2003), no. 2, 213--231. 

\bibitem{Segal}
Graeme Segal.
\newblock Classifying spaces and spectral sequences. \newblock {\em Publications Mathématiques de l'IHÉS}, Volume 34 (1968), pp. 105-112. \url{http://www.numdam.org/item/PMIHES_1968__34__105_0/}

\bibitem{SegalCategories}
Graeme Segal.
\newblock Categories and cohomology theories.
\newblock {\em Topology},
Volume 13, Issue 3,
1974,
Pages 293--312,
ISSN 0040-9383,
\url{https://doi.org/10.1016/0040-9383(74)90022-6}.

\bibitem{Si}
D. Sinha.
\newblock Manifold-theoretic compactifications of configuration spaces.
\newblock {\em Selecta Math. (N.S.)} 10, no. 3, 391--428, 2004.

\bibitem{Tam}
Dmitry E. Tamarkin.
\newblock Formality of chain operad of little discs.
\newblock {\em Lett. Math. Phys.} 66, no. 1-2, 65--72, 2003.

\bibitem{Will}
Thomas Willwacher.
\newblock M. {K}ontsevich's graph complex and the
  {G}rothendieck--{T}eichm\"uller {L}ie algebra.
\newblock {\em Invent. Math.}, 200(3):671--760, 2015.

\end{thebibliography}

\end{document}